\theoremstyle{plain}
\newtheorem{theorem}{Theorem}
\newtheorem{lemma}[theorem]{Lemma}
\newtheorem{prop}[theorem]{Proposition}
\newtheorem{corollary}[theorem]{Corollary}
\newtheorem{claim}{Claim}
\theoremstyle{definition}
\theoremstyle{remark}
\newtheorem{remark}[theorem]{Remark}
\numberwithin{equation}{section}
\numberwithin{theorem}{section}
\numberwithin{conjecture}{section}
\newcommand{\br}{\overline}
\newcommand{\C}{\mathbb C}
\newcommand{\D}{\mathbb D}
\newcommand{\N}{\mathbb N}
\newcommand{\R}{\mathbb R}
\DeclareMathOperator{\dist}{dist}
\DeclareMathOperator{\diam}{diam}
\DeclareMathOperator{\inter}{int}
\DeclareMathOperator{\cig}{Cig}
\renewcommand{\mod}{\mathrm{Mod\,}}
\title{Uniformization of Gromov hyperbolic domains by circle domains}
\author{Christina Karafyllia}
\author{Dimitrios Ntalampekos}
\address{Mathematics Department, Stony Brook University, Stony Brook, NY 11794, USA.}
\thanks{The second author is partially supported by NSF Grant DMS-2246485 and the Simons Foundation.}
\email{xristinakrf@gmail.com} 
\email[Corresponding author]{dimitris.ntal@gmail.com}
\date{\today}
\keywords{Gromov hyperbolic, uniform domain, inner uniform domain, circle domain, Koebe's conjecture, uniformization, rigidity}
\subjclass[2020]{Primary 30C62, 30C65; Secondary 30C35}
\begin{document}

	\begin{abstract}
  		We prove that a domain in the Riemann sphere is Gromov hyperbolic if and only if it is conformally equivalent to a uniform circle domain. This resolves a conjecture of Bonk--Heinonen--Koskela and also verifies Koebe's conjecture (Kreisnormierungsproblem) for the class of Gromov hyperbolic domains. Moreover, the uniformizing conformal map from a Gromov hyperbolic domain onto a circle domain is unique up to M\"obius transformations. We also undertake a careful study of the geometry of inner uniform domains in the plane and prove the above uniformization and rigidity results for such domains. 
	\end{abstract}
\maketitle

\section{Introduction}

A geodesic metric space $X$ is called \textit{Gromov hyperbolic} if there exists $\delta>0$ such that, for each geodesic triangle, each side is within distance $\delta$ from the union of the other two sides. In other words, each geodesic triangle is \textit{$\delta$-thin}. In that case we say that $X$ is $\delta$-hyperbolic. Gromov \cite{Gromov:hyperbolic} showed that several fundamental characteristics of hyperbolic space can be recovered by that condition. Gromov hyperbolic spaces have been studied in depth and include, for example, all complete simply connected Riemannian manifolds of sectional curvature uniformly bounded above by a negative constant. See \cite{GhysHarpe:gromov} for further background.

Bonk, Heinonen, and Koskela \cite{BonkHeinonenKoskela:gromov_hyperbolic} studied Euclidean domains that are Gromov hyperbolic when equipped with the quasihyperbolic metric. Let $\Omega$ be a domain in the Riemann sphere $\widehat{\C}$ with $\partial \Omega\neq \emptyset$. We equip $\Omega$ with the spherical metric $\sigma$ on $\widehat{\C}$. The \textit{quasihyperbolic metric} in $\Omega$ is defined as
$$k_{\Omega}(x,y)= \inf_{\gamma} \int_\gamma \frac{1}{\dist_{\sigma} (z,\partial \Omega)} \frac{2|dz|}{1+|z|^2},$$ 
where the infimum is taken over all rectifiable paths $\gamma$ in $\Omega$ connecting $x$ and $y$. Recall that, $2(1+|z|^2)^{-1}|dz|$ is the spherical length element. We say that the domain $\Omega$ is Gromov hyperbolic if the metric space $(\Omega,k_{\Omega})$ is Gromov hyperbolic.

Let $\Omega\subset \widehat \C$ be a domain, equipped with the spherical metric. We say that $\Omega$ is a \textit{spherical uniform domain} if there exists a constant $A\geq1$ such that for every pair of points $x,y\in \Omega$ there exists a curve $\gamma\colon [0,1]\to \Omega$ such that $\gamma(0)=x$, $\gamma(1)=y$, 
\begin{align}\label{uniform:quasiconvex}
\ell_{\sigma} (\gamma)\le A\sigma(x,y),
\end{align}
and
\begin{align}\label{uniform:cigar}
\min\{\ell_{\sigma} (\gamma|_{[0,t]}), \ell_{\sigma}(\gamma|_{[t,1]} )\} \leq A \dist_{\sigma}(\gamma(t),\partial \Omega)
\end{align}
for all $t\in [0,1]$. If we replace condition   \eqref{uniform:quasiconvex} with the weaker condition that $\ell_{\sigma}(\gamma)\leq A\ell_{\sigma}(\gamma')$ for any path $\gamma'$ in $\Omega$ connecting $x$ and $y$ (so $\gamma$ has minimal length in a sense), then $\Omega$ is called a \textit{spherical inner uniform domain}. Euclidean (inner) uniform domains are defined in the obvious manner, using the Euclidean metric rather than the spherical. Uniform domains were introduced by Martio--Sarvas \cite{MartioSarvas:uniform} and Jones \cite{Jones:uniform} independently. From a point of view of quasiconformal geometry, these domains share several properties with the nicest possible domain, i.e., the unit disk, and its quasiconformal counterparts, i.e., quasidisks. We now summarize the main results of Bonk--Heinonen--Koskela for domains in the Riemann sphere.

\begin{theorem}[\cite{BonkHeinonenKoskela:gromov_hyperbolic}*{Theorems 1.11, 1.12, 3.6}]\label{theorem:bhk_inner}
Let $\Omega$ be a domain in the Riemann sphere. If $\Omega$ is a spherical inner uniform domain, then it is Gromov hyperbolic, quantitatively. Conversely, if $\Omega$ is Gromov hyperbolic, then it is conformally equivalent to a spherical inner uniform  domain.
\end{theorem}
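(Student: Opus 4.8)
The plan is to treat the two implications separately. Throughout I use the principle that a conformal map between domains in $\widehat{\C}$ is $L$-bi-Lipschitz, with a universal $L$, for the associated spherical quasihyperbolic metrics --- this is the Koebe distortion theorem applied at the scale $\dist_\sigma(\cdot,\partial\Omega)$, at which the spherical metric is bi-Lipschitz to a Euclidean one in normalized coordinates --- so that Gromov hyperbolicity is a conformal invariant and, for the converse, it suffices to produce one conformal copy of $\Omega$ that is spherical inner uniform. Also $(\Omega,k_\Omega)$ is complete and locally compact, hence proper and geodesic by Hopf--Rinow, so the thin-triangles definition applies.

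For the implication that a spherical $A$-inner uniform domain is Gromov hyperbolic, I would first describe quasihyperbolic geodesics, establishing with constants depending only on $A$: (a) a Gehring--Osgood phenomenon, that every quasihyperbolic geodesic $\gamma$ is itself an inner uniform curve --- it satisfies the cigar condition $\min\{\ell_\sigma(\gamma|_{[0,t]}),\ell_\sigma(\gamma|_{[t,1]})\}\le C\dist_\sigma(\gamma(t),\partial\Omega)$ and has spherical length comparable to the inner distance between its endpoints; and (b) a Gehring--Hayman inequality, that the spherical length of the geodesic $[x,y]$ is comparable to that of the inner-shortest curve joining $x$ and $y$, hence no longer up to a constant than any curve in $\Omega$ from $x$ to $y$. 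Both come from adapting the familiar Euclidean arguments: subdivide $\gamma$ into pieces along which $\dist_\sigma(\cdot,\partial\Omega)$ roughly doubles, compare each to the inner uniform curve furnished by the definition of $\Omega$, and sum a geometric series; the only real nuisance is bookkeeping in the spherical normalization, handled by rotating the sphere to place the configuration in a fixed chart. Given (a) and (b), Gromov hyperbolicity follows by verifying the two conditions that characterize it for such a length space: the Gehring--Hayman condition itself together with a ball separation property --- every curve from $x$ to $y$ meets $B_\sigma(z,c\,\dist_\sigma(z,\partial\Omega))$ for each $z$ on the geodesic $[x,y]$ --- the latter being immediate from the cigar condition in (a). Equivalently one can argue directly, using that the cigar condition forces $\dist_\sigma(\gamma(t),\partial\Omega)$ to grow linearly in $k_\Omega$ toward the interior of a geodesic, which identifies the Gromov product $(x\mid y)_z$ with $k_\Omega(z,[x,y])$ up to a universal additive error and reduces the four-point inequality to a short argument about three inner uniform curves. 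All constants depend only on $A$, which is the quantitative claim.

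The converse is the substantial direction. Given that $(\Omega,k_\Omega)$ is $\delta$-hyperbolic, one first records that it then enjoys the Gehring--Hayman and ball separation properties --- these are necessary, not only sufficient, for Gromov hyperbolicity --- and passes to the Gromov boundary $\partial_G\Omega$ with a visual metric based at a point $z_0\in\Omega$. The conformal deformation machinery for Gromov hyperbolic spaces then applies: for all small $\varepsilon>0$, the length metric on $\Omega$ with density $e^{-\varepsilon k_\Omega(\cdot,z_0)}$ relative to $k_\Omega$ turns $\Omega$ into a bounded uniform metric space whose completion is $\Omega\sqcup\partial_G\Omega$. The crux --- and where I expect most of the work to lie --- is to realize this uniform model by an honest conformal map $F\colon\Omega\to\Omega'$ onto a planar domain in which the intrinsic spherical geometry is quantitatively comparable to $\dist_\sigma(\cdot,\partial\Omega')$, i.e.\ to turn the purely metric uniform model into a genuinely planar one. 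When $\Omega$ is simply, or finitely, connected this conformalization is handed to us by the Riemann mapping theorem and its finite-connectivity refinement, the resulting canonical domain being automatically spherical inner uniform; in general one must build $F$ directly, using that a quasihyperbolic geodesic ray in $\Omega$ converges to a point of $\partial\Omega$ --- which yields a continuous surjection $\partial_G\Omega\to\partial\Omega$ --- together with the ball separation property to transfer the uniform structure of the deformed space to the extrinsic spherical geometry of $\partial\Omega$, controlling the construction by these estimates and by the rigidity of the Gromov structure. Producing that conformal map is the main obstacle, and it is exactly the point where the full strength of Gromov hyperbolicity, rather than some weaker metric hypothesis, is consumed.
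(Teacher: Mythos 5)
There is nothing in this paper to compare your argument against: Theorem \ref{theorem:bhk_inner} is not proved here at all, but imported verbatim from Bonk--Heinonen--Koskela \cite{BonkHeinonenKoskela:gromov_hyperbolic} (their Theorems 1.11, 1.12 and 3.6), and the present authors use it as a black box. So the only question is whether your sketch would constitute a proof, and it would not. The forward direction (inner uniform $\Rightarrow$ hyperbolic, quantitatively) is outlined along recognizable lines --- Gehring--Hayman plus ball separation, or direct estimates on quasihyperbolic geodesics as in BHK's Theorem 3.6 --- and could plausibly be completed, though everything there is asserted rather than proved. The converse, however, is where the entire content of BHK's Theorem 1.12 lies, and your proposal stops exactly at that point: you introduce the deformed metric $e^{-\varepsilon k_\Omega(\cdot,z_0)}$, correctly note that it produces a bounded uniform \emph{metric space}, and then concede that realizing this metric model by an honest conformal map onto a planar domain ``is the main obstacle,'' offering only the Riemann mapping theorem in the simply/finitely connected case and vague appeals to ``rigidity of the Gromov structure'' in general. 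That missing step is the theorem; a proof that defers it is not a proof. (In BHK this is precisely where the two-dimensional theory of conformal densities and the classical uniformization machinery is brought to bear, and none of that is reproduced or replaced in your sketch.)

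A second, concrete error: your opening claim that a conformal map between domains in $\widehat{\C}$ is $L$-bi-Lipschitz for the \emph{spherical} quasihyperbolic metrics with a \emph{universal} $L$ is false. The Gehring--Osgood theorem gives a universal constant only for the Euclidean quasihyperbolic metric; for the spherical version the constant must depend on the geometry of the complements, as this paper itself shows in Theorem \ref{theorem:conformal_bilip}, together with the explicit counterexample $f(z)=rz$ on $\D$ given immediately after it. For the purely qualitative converse this does not matter (for a fixed pair of domains a bi-Lipschitz constant exists, so hyperbolicity is still conformally invariant), but it does matter for any quantitative bookkeeping, and it is exactly the pitfall the present paper goes out of its way to address.
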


We note that every simply connected domain in the Riemann sphere is Gromov hyperbolic, but not every simply connected domain is an inner uniform domain. Thus, the above theorem asserts that although the Euclidean geometry of a Gromov hyperbolic domain need not be nice, the domain can be transformed conformally to a domain with nice Euclidean geometry. A deep geometric characterization of Gromov hyperbolic domains was provided by Balogh--Buckley \cite{BaloghBuckley:gromov}.

Bonk--Heinonen--Koskela conjectured that Gromov hyperbolic domains are precisely the conformal images of uniform \textit{circle domains}; see \cite{BonkHeinonenKoskela:gromov_hyperbolic}*{p.~5}. A {circle domain} is a domain in the Riemann sphere with the property that each component of its boundary is a circle or a point. Our main theorem, in combination with Theorem \ref{theorem:bhk_inner} and the conformal invariance of hyperbolicity (see Theorem \ref{theorem:conformal_bilip}), provides an affirmative answer to the conjecture.

\begin{theorem}\label{theorem:main}
Let $\Omega$ be a domain in the Riemann sphere. If $\Omega$ is Gromov hyperbolic, then there exists a conformal map $f$ from $\Omega$ onto a circle domain $D$. Moreover, the conformal map $f$ is unique up to postcomposition with M\"obius transformations and the circle domain $D$ is a spherical uniform domain.
\end{theorem}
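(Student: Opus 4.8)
The plan is to reduce the general case to the case of inner uniform domains and to invoke a uniformization/rigidity theorem for that class, which, judging from the abstract, the paper establishes separately. First I would apply Theorem~\ref{theorem:bhk_inner}: since $\Omega$ is Gromov hyperbolic, there is a conformal map $g$ from $\Omega$ onto a spherical inner uniform domain $\Omega'$. Composing with a M\"obius transformation if necessary, I may assume $\Omega'$ is a bounded domain in $\C$ and in fact a (Euclidean) inner uniform domain, so that all subsequent work can be carried out in the plane with the Euclidean metric. It then suffices to prove the statement for $\Omega'$: if $f'\colon \Omega'\to D$ is a conformal map onto a circle domain with $D$ spherical uniform, then $f=f'\circ g$ works for $\Omega$, and the uniqueness up to M\"obius transformations for $\Omega$ follows from that for $\Omega'$ together with the fact that a conformal automorphism of a circle domain with three or more boundary components is a M\"obius transformation (and for domains with few boundary components one argues directly, or the statement is interpreted modulo the usual degeneracies).

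Next I would address the existence of the uniformizing map onto a circle domain for an inner uniform planar domain $\Omega'$. The natural route is He--Schramm's solution of the Koebe conjecture for domains with at most countably many boundary components, or more robustly, their result that countably connected domains and, more generally, domains whose boundary components are ``cofat'' or satisfy suitable quantitative conditions are conformally circle domains. The key point is that an inner uniform domain has controlled boundary geometry: its boundary components are, in a quantitative sense, uniformly fat (they are not too thin, because thin fjords are ruled out by the cigar condition on minimal-length curves). So the first substantive step is to show that the boundary components of an inner uniform domain satisfy the hypotheses of an available Koebe-type uniformization theorem (e.g., that $\widehat\C\setminus\Omega'$ is a countable union of uniformly fat continua, or that $\Omega'$ is a John domain complement type situation), thereby producing a conformal map $f'$ onto a circle domain $D$.

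The second substantive step is to show that the image $D$ is a spherical uniform domain and to establish rigidity. For the geometry of $D$: one transfers the uniformity of $\Omega'$ across the conformal map. Here I would use distortion estimates for conformal maps between domains with nice boundary behavior — the Gehring--Osgood / Gehring--Hayman type inequalities and the comparison between the quasihyperbolic metrics of $\Omega'$ and $D$ — to show that the cigar condition and the quasiconvexity condition are preserved (possibly after passing from inner uniform to uniform, using that in a circle domain inner and boundary distances are comparable because the complementary components are round). For rigidity: if $f_1\colon \Omega'\to D_1$ and $f_2\colon \Omega'\to D_2$ are two such maps, then $h=f_2\circ f_1^{-1}\colon D_1\to D_2$ is a conformal map between circle domains; by the rigidity part of He--Schramm (conformal maps between circle domains, at least one of which has the appropriate boundary structure, are M\"obius), $h$ is a M\"obius transformation, which gives the claimed uniqueness.

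I expect the main obstacle to be the geometric step: verifying that an inner uniform (equivalently, Gromov hyperbolic) domain's boundary meets the hypotheses of a Koebe uniformization theorem when the number of boundary components may be uncountable — the classical He--Schramm results are stated for countably many components, so one either needs a Gromov-hyperbolic-specific argument (exploiting the cigar condition to get a strong, quantitative fatness of boundary components that forces countability, or a limiting/exhaustion argument approximating $\Omega'$ by finitely connected circle domains and controlling the limit using uniformity estimates) or a more recent uniformization result for ``cofat'' countably-many-or-not domains. Closing that gap — proving that Gromov hyperbolic domains have at most countably many non-point boundary components, or directly constructing the circle uniformization via an exhaustion with uniform geometric control — is where the real work lies; the transfer of uniformity and the rigidity statement are comparatively routine given the conformal distortion toolkit for inner uniform domains developed elsewhere in the paper.
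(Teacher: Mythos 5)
There is a genuine gap, and it is exactly the one you flag at the end: the existence of the circle uniformization for an inner uniform domain is the heart of the matter, and none of the routes you sketch for it actually works as stated. First, the boundary components of an inner uniform domain need not be fat: the complement of a line segment (or any slit domain) is inner uniform, and a segment has zero area, so Schramm's ``cofat'' transboundary theorem does not apply. Second, countability of boundary components fails: a uniform circle domain can have an uncountable Cantor set of point boundary components, and such domains are Gromov hyperbolic, so one cannot reduce to He--Schramm's countably connected theorem. Third, the rigidity you invoke is also insufficient: the circle domain produced may be uncountably connected without $\sigma$-finite boundary length, so He--Schramm rigidity does not give uniqueness; the paper instead uses the rigidity of John (uniform) circle domains from \cite{NtalampekosYounsi:rigidity}. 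The one viable route you mention only in passing --- exhaustion by finitely connected domains with uniform geometric control and a limiting argument --- is precisely what the paper carries out, and it is where all the work lies: uniform relative separation and bounded turning of complementary components (Theorems \ref{theorem:relative_distance}, \ref{theorem:bounded_turning}), the approximation theorem producing finitely connected inner uniform domains with uniform constant (Theorem \ref{theorem:approximation}, Corollary \ref{corollary:approximation}), Koebe's finite uniformization with quantitative control of the images via the Loewner/quasisymmetry machinery (Theorem \ref{theorem:conformal_inner_uniform}), and a Carath\'eodory kernel limit in which limits of uniform circle domains remain uniform circle domains (Corollary \ref{corollary:circle}), plus a topological argument to handle the totally disconnected residual set of point components.

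A secondary issue is the quantitative and spherical bookkeeping. Your step ``transfer the uniformity of $\Omega'$ across the conformal map'' is not how the spherical uniformity of $D$ is obtained: the paper first normalizes via a slit-domain uniformization so that complements stay in a fixed Euclidean ball with diameter bounded below (Lemma \ref{lemma:normal}), uses conformal invariance of hyperbolicity with controlled constants (Theorem \ref{theorem:conformal_bilip}, Corollary \ref{corollary:invariance}), converts spherical inner uniformity to Euclidean inner uniformity (Theorem \ref{theorem:spherical_euclidean_inner}), and at the end recovers spherical uniformity of the circle domain not by pushing cigars through $f$ but by noting that the image is again Gromov hyperbolic and applying \cite{BonkHeinonenKoskela:gromov_hyperbolic}*{Proposition 7.12} (Proposition \ref{prop:uniform_circle_hyperbolic}) to circle domains. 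Your reduction to a bounded planar inner uniform domain via a M\"obius map is fine in spirit, but without this normalization scheme the constants are not controlled, and the last sentence of the theorem (quantitative uniformity of $D$) would not follow.
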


The last statement is quantitative in the sense that the constant of the uniform domain $D$ depends only on the hyperbolicity constant of $\Omega$ and on the radius of the largest spherical ball that is contained in $\Omega$. We are not aware whether the latter dependence can be dropped. We remark that the proof given in \cite{BonkHeinonenKoskela:gromov_hyperbolic} for the last statement of Theorem \ref{theorem:bhk_inner} is not quantitative, but our argument provides the same type of quantitative control for Theorem \ref{theorem:bhk_inner} as well. The proof of Theorem \ref{theorem:main} is given in Section \ref{section:uniformization_gh}.

Theorem \ref{theorem:main} provides an affirmative answer to \textit{Koebe's conjecture} or else \textit{
Kreis\-normierung\-sproblem} for the class of Gromov hyperbolic domains. Koebe's conjecture asserts that every domain in the Riemann sphere is conformally equivalent to a {circle domain} \cite{Koebe:Kreisnormierungsproblem}. The conjecture was established for finitely connected domains by Koebe \cite{Koebe:FiniteUniformization} and for countably connected domains by He--Schramm \cites{HeSchramm:Uniformization,Schramm:transboundary}, which is currently the best known result. The general case of the conjecture remains open. We refer to \cites{Rajala:koebe,NtalampekosRajala:exhaustion,EsmayliRajala:quasitripod} for some recent activity on the conjecture.

The last part of Theorem \ref{theorem:main} regarding uniqueness follows from the fact that a conformal map from a \textit{uniform} circle domain onto another circle domain is the restriction of a M\"obius transformation. In general, a circle domain is \textit{conformally rigid} if every conformal map onto another circle domain is the restriction of a M\"obius transformation. Not every circle domain is rigid and it is an open problem to characterize these domains. The rigidity of uniform circle domains, as in Theorem \ref{theorem:main}, follows from a recent result of Younsi and the second-named author \cite{NtalampekosYounsi:rigidity}. Other classes of rigid circle domains include finitely connected domains \cite{Koebe:FiniteUniformization}, countably connected domains \cite{HeSchramm:Uniformization}, and domains whose boundary has $\sigma$-finite length \cite{HeSchramm:Rigidity}. A more recent result of the second-named author \cite{Ntalampekos:rigidity_cned} demonstrates rigidity of a larger class of circle domains that includes all of the above classes.

Moreover, Theorem \ref{theorem:main}, in combination with  Theorem \ref{theorem:bhk_inner} and the conformal invariance of hyperbolicity (see Theorem \ref{theorem:conformal_bilip}), provides a characterization of domains that are conformally equivalent to uniform circle domains. It would be interesting to find a characterization for more general classes of circle domains, such as \textit{John domains}, i.e., domains any two points of which can be connected by a curve satisfying \eqref{uniform:cigar}.

To prove Theorem \ref{theorem:main}, we first use Theorem \ref{theorem:bhk_inner} to reduce to the case of spherical inner uniform domains; then we also reduce to the case of Euclidean inner uniform domains. In Section \ref{section:inner_euclidean} we establish several geometric properties of such domains. The most technical results of the paper are summarized in the next theorem. 

\begin{theorem}
Let $\Omega\subset \C$ be a Euclidean inner uniform domain. The following statements hold quantitatively.
\begin{enumerate}[label=\normalfont(\arabic*)]
	\item The complementary components of $\Omega$ are uniformly relatively separated.
	\item The complementary components of $\Omega$ have uniformly bounded turning.
	\item If $\Omega$ contains a neighborhood of $\infty$, there exists a sequence $\{\Omega_n\}_{n\in \N}$ of finitely connected domains that are inner uniform with the same constant such that $\Omega\subset \Omega_{n+1}\subset \Omega_n$ for each $n\in \N$ and $(\bigcap_{n\in \N}\Omega_n) \setminus \Omega$ is a totally disconnected set.
\end{enumerate}
\end{theorem}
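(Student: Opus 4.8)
The plan is to prove each of the three statements by exploiting the defining cigar condition of inner uniform domains together with a Gehring--Osgood type comparison between the quasihyperbolic metric and the inner metric.

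\textbf{Statements (1) and (2).} For uniform relative separation, recall that two complementary components $E_1, E_2$ of $\Omega$ are relatively separated if $\Delta(E_1,E_2) := \dist(E_1,E_2)/\min\{\diam E_1, \diam E_2\}$ is bounded below. I would argue by contradiction: if $\Delta(E_1,E_2)$ is tiny, pick points $x_i \in E_i$ realizing (nearly) the distance and diameter scales, and consider a point $z$ roughly midway on the ``bridge'' between $E_1$ and $E_2$; then $\dist_\Omega(z,\partial\Omega)$ is forced to be comparable to $\dist(E_1,E_2)$, which is much smaller than the inner distance any curve from a far point of $E_1$ to a far point of $E_2$ must traverse — but the cigar condition \eqref{uniform:cigar}, applied with the inner-length-minimizing curve guaranteed by inner uniformity, caps that inner distance in terms of $\dist_\Omega(z,\partial\Omega)$ at the halfway parameter, a contradiction. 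The bounded turning statement is similar in spirit: for a complementary component $E$ and two points $a,b \in E$, one wants an arc in $E$ joining $a,b$ with diameter $\lesssim |a-b|$; here the standard trick is that $E$ is a single ``hole'' of $\Omega$, so any curve in $\Omega$ that must separate $a$ from $b$ cannot be too short, and conversely a short curve in $\Omega$ staying near $E$ together with the cigar condition shows the portion of $\partial\Omega$ near that curve (hence a subarc of $E$) has controlled diameter. The key auxiliary fact I expect to need is that in an inner uniform domain the inner metric and the quasihyperbolic metric are related by $k_\Omega(x,y) \asymp \log(1 + \rho_\Omega(x,y)/\min\{\dist_\Omega(x,\partial\Omega),\dist_\Omega(y,\partial\Omega)\})$, which localizes geometric information.

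\textbf{Statement (3).} Assume $\infty \in \Omega$ and $\Omega$ contains a neighborhood of $\infty$. The idea is to exhaust the complement $K = \widehat{\C}\setminus\Omega$ (a compact subset of $\C$) by finite unions of its components together with ``fattened'' neighborhoods, producing nested finitely connected open sets $\Omega_n \supset \Omega$. Concretely, for each $n$ I would take a maximal finite collection of components of $K$ of diameter $\geq 1/n$ (finitely many, since the components have summable — in fact the large ones are finitely many by compactness), thicken each of the remaining small components into a closed topological disk of comparable size whose boundary avoids $K$, take the union with the large components, and let $\Omega_n$ be the complement; arrange $\Omega_{n+1} \subset \Omega_n$ by nesting the thickenings. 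Then $\bigcap_n \Omega_n \setminus \Omega$ consists of limit points of arbitrarily small components, hence is totally disconnected. The real work is showing $\Omega_n$ is inner uniform with a constant independent of $n$: given $x,y \in \Omega_n$, take the inner-geodesic-like curve $\gamma$ in $\Omega$ (or in $\Omega_n$ if $x$ or $y$ lies in the thickened region, where it is easy to get in and out) and verify the cigar condition in $\Omega_n$; since $\dist(z,\partial\Omega_n) \geq \dist(z,\partial\Omega)$ away from the thickenings and is comparable near a thickened small disk (by the bounded-turning/relative-separation estimates of (1), (2), which guarantee the thickening doesn't shrink distances by more than a fixed factor), the cigar inequality transfers. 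One must also ensure $\Omega_n$ is genuinely finitely connected — this follows if only finitely many components of $K$ have diameter above any fixed threshold, a consequence of compactness of $K$ together with relative separation from (1) (which prevents infinitely many comparably-sized components from clustering).

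\textbf{Main obstacle.} The hardest part will be the uniform control of the inner uniformity constant in (3): one needs the thickened small components to be chosen so that (a) their boundaries are nice (say, finite unions of arcs or smooth Jordan curves disjoint from $K$), (b) the thickening is geometrically comparable to the original component — this is exactly where bounded turning (2) is essential, since a component with wild turning cannot be replaced by a comparably-sized disk without creating a bottleneck — and (c) the nesting $\Omega_{n+1}\subset\Omega_n$ is compatible with all these choices simultaneously. Executing this requires a careful covering/stopping-time construction on the components of $K$ and a quantitative check that a curve satisfying the cigar condition in $\Omega$ still does so in $\Omega_n$, with the worst case being a curve that passes near many small thickened components at once; relative separation (1) is what rules out that worst case.
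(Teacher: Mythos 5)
There are genuine gaps in both halves of your proposal. For (1) and (2), the bottleneck argument as sketched never explains why an inner uniform curve must pass through the narrow ``bridge'' between the two components. The cigar condition only constrains points lying \emph{on} the chosen uniform curve, and a curve joining points of $\Omega$ near the far ends of $E_1$ and $E_2$ (the endpoints you name are not even in $\Omega$) can simply go around one of the components and avoid the gap entirely, in which case no contradiction arises; the quasihyperbolic comparison $k_\Omega\asymp\log(1+\rho_\Omega/\dist)$ does not supply this forcing either. The paper's proof of Lemma \ref{lemma:relative_distance} exists precisely to solve this: Zoretti's theorem produces a Jordan curve $J$ in the complement of $\br B(z_0,R)\cap(\R^2\setminus\Omega)$ separating $S_1$ from $S_2$, one selects an arc $\gamma$ of $J\cap B(z_0,pr)$ whose homotopy class rel endpoints in $\R^2\setminus(S_1\cup S_2)$ forces intersection with the bridge $L$, and then a winding-number/diameter estimate shows the loop formed by $\gamma$ and the $(A,\rho_\Omega)$-uniform curve $\alpha$ joining its endpoints is null-homotopic, so $\alpha$ itself must cross $L$; only then does the cigar condition give the contradiction. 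Your sketch contains no analogue of this topological step, and the same issue undermines your argument for bounded turning (in the paper, (2) is a short consequence of the same local lemma applied to two components of $\br B(z_0,Hr)\cap S$).

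For (3), the construction is incoherent as stated: if the complement of $\Omega_n$ contains thickened disks around small components, then $\Omega_n\subset\Omega$, contradicting the required inclusion $\Omega\subset\Omega_{n+1}\subset\Omega_n$, and with infinitely many small components the result would not be finitely connected. The correct construction (Theorem \ref{theorem:approximation} and Corollary \ref{corollary:approximation}) involves no thickening at all: one simply \emph{fills in} every complementary component of diameter at most $1/n$, keeping only the finitely many large ones (finiteness needs the separation result, as you note, not compactness alone). More importantly, you verify only the cigar condition, which transfers trivially since $\dist(z,\partial\Omega_n)\geq\dist(z,\partial\Omega)$; the real difficulty is the quasiconvexity requirement $d(\gamma)\leq A\rho_{\Omega_n}(x,y)$, because filling in components can make $\rho_{\Omega_n}(a,b)$ much smaller than $\rho_\Omega(a,b)$, so the inner uniform curves of $\Omega$ are far too long. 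The paper handles this by showing that when $\rho_\Omega(a,b)>H\rho_{\Omega_n}(a,b)$ there is a single filled-in component $S$ of controlled relative size responsible, and it builds a new curve that travels to a point near $S$, runs alongside $S$ using the bounded turning of $S$ and its uniform separation from $\partial\Omega_n$, and then concatenates cigars (Lemma \ref{lemma:concatenation_cigar}). Your proposal does not identify this obstruction, so the uniform bound on the inner uniformity constant of $\Omega_n$ is not established.
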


See Theorem \ref{theorem:relative_distance}, Theorem \ref{theorem:bounded_turning}, and Corollary \ref{corollary:approximation} for more precise statements. Using these geometric properties in Section \ref{section:uniformization} we establish the uniformization of Euclidean inner uniform domains.

\begin{theorem}\label{theorem:uniformization_euclidean}
Let $\Omega\subset \C$ be a Euclidean inner uniform domain that contains a neighborhood of $\infty$. Then there exists a conformal map $f$ from $\Omega$ onto a circle domain $D\subset \C$ that contains a neighborhood of $\infty$ and $f$ extends continuously to $\infty$ so that $f(\infty)=\infty$. Moreover, the conformal map $f$ is unique up to postcomposition with conformal automorphisms of $\C$  and the circle domain $D$ is a Euclidean uniform domain, quantitatively.
\end{theorem}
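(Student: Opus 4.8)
The strategy is to realize $D$ as a limit of finitely connected circle domains, using the Koebe uniformization theorem in the finitely connected case together with a compactness argument. First I would invoke part (3) of the geometric theorem to obtain a decreasing sequence $\{\Omega_n\}$ of finitely connected inner uniform domains, all containing a fixed neighborhood of $\infty$, with the same inner uniformity constant, such that $\Omega \subset \Omega_{n+1} \subset \Omega_n$ and $(\bigcap_n \Omega_n)\setminus \Omega$ is totally disconnected. By Koebe's theorem for finitely connected domains \cite{Koebe:FiniteUniformization}, each $\Omega_n$ is conformally equivalent to a circle domain $D_n$; normalizing by postcomposing with a conformal automorphism of $\widehat{\C}$, I may assume each uniformizing map $f_n\colon \Omega_n \to D_n$ fixes $\infty$ with $f_n'(\infty) > 0$ and, say, sends a fixed point $z_0\in\Omega$ to $0$. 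The key point is that the inner uniformity of $\Omega_n$ with a \emph{uniform} constant, combined with parts (1) and (2) (uniform relative separation and bounded turning of the complementary components), should force the family $\{f_n\}$ to be normal on $\Omega$ and, crucially, should give uniform geometric control — bounded turning and relative separation — on the complementary components of the image domains $D_n$ as well. This is because inner uniformity, relative separation, and bounded turning are, up to quantitative constants, conformally invariant for this class of domains (this is essentially the content of the geometric results of Section \ref{section:inner_euclidean} combined with standard distortion estimates); in fact the $D_n$ are spherical, hence after the normalization Euclidean, uniform domains with a constant depending only on the original data.

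Next I would extract a locally uniformly convergent subsequence $f_n \to f$ on $\Omega$. The limit $f$ is conformal on $\Omega$ (it is non-constant because the normalization pins down two values, and univalence is preserved under local uniform limits by Hurwitz's theorem together with the injectivity on each $\Omega_n\supset\Omega$). Let $D = f(\Omega)$. I must show: (a) $D$ is a circle domain; (b) $D$ contains a neighborhood of $\infty$ and $f$ extends continuously with $f(\infty)=\infty$; (c) $D$ is a Euclidean uniform domain quantitatively; (d) uniqueness up to affine maps. For (c), once one knows $D$ is a circle domain, its uniformity should follow from transferring the uniform constant of the $D_n$ through the convergence, using that $\Omega\subset\Omega_n$ so $f(\Omega)\subset$ (a limit of) $D_n$; alternatively, uniformity of $D$ follows directly because $f$ is a conformal map of the inner uniform domain $\Omega$ and uniformity of the image is again a quantitative conformal-invariance statement for this class, with the circle-domain structure ensuring genuine (not just inner) uniformity. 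For (b), the fact that $\Omega$ contains a punctured neighborhood of $\infty$ on which $f$ is conformal and (by the normalization) has a simple pole, lets one apply the usual removable-singularity / Laurent-expansion argument at $\infty$.

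The main obstacle is (a): showing the boundary components of $D$ are all circles or points, and in particular that no "new" boundary appears and no boundary component degenerates badly in the limit. The set $\partial D$ naturally splits according to $\bigcap_n \Omega_n$ versus the totally disconnected "difference" set $(\bigcap_n\Omega_n)\setminus\Omega$. The boundary components of $\Omega$ coming from the nested limit of the $\partial\Omega_n$ should converge, as sets, to circles or points — here the uniform relative separation (part (1)) is essential: it prevents two distinct complementary components from colliding in the limit, so the limiting images of the round complementary components of $D_n$ stay uniformly separated and remain genuine disks/points rather than merging. The totally disconnected remainder set maps, under $f$, to a totally disconnected subset of $\partial D$; one shows each such point is an isolated degenerate boundary component (a point), using that totally disconnected sets of this type are conformally removable in the relevant sense, or by a direct capacity/length estimate exploiting bounded turning. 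Carrying out this convergence-of-boundary-components argument rigorously — matching up components of $\partial\Omega$, components of $\partial\Omega_n$, and components of $\partial D_n$, and controlling their Hausdorff limits — is the technical heart of the proof. Finally, (d) uniqueness up to conformal automorphisms of $\C$ is exactly the rigidity statement for uniform circle domains from \cite{NtalampekosYounsi:rigidity}, applied to $D$ (which is uniform by (c)) together with the normalization at $\infty$.
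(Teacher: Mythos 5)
Your overall skeleton is the same as the paper's: approximate $\Omega$ by finitely connected inner uniform domains with a uniform constant (Corollary \ref{corollary:approximation}), uniformize each by Koebe's finitely connected theorem, obtain that the image circle domains $D_n$ are uniformly Euclidean uniform (in the paper this is Theorem \ref{theorem:conformal_inner_uniform} combined with Lemma \ref{lemma:circle_llc}, not ``standard distortion estimates''), pass to a normal-families limit, and deduce uniqueness from rigidity of John/uniform circle domains \cite{NtalampekosYounsi:rigidity}. One minor inaccuracy: normality of $\{f_n\}$ comes from the normalization of univalent maps near $\infty$ (hydrodynamic normalization), not from the geometric control on the domains. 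The real problem is at the step you yourself call the technical heart, namely showing that the limit image is a circle domain; as proposed, this step has a genuine gap.

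You extract convergence of $f_n$ only on $\Omega$ and set $D=f(\Omega)$, and you must then show (i) the nondegenerate complementary components of $D$ are closed disks and (ii) the boundary pieces corresponding to the point components of $\partial\Omega$ are single points. For (ii) you invoke ``conformal removability in the relevant sense'' or an unspecified capacity/length estimate; neither is available: totally disconnected compact sets are not conformally removable in general, and nothing in your sketch rules out a point component of $\partial\Omega$ blowing up into a nondegenerate continuum under the limit map. The paper sidesteps this entirely by exploiting that each $f_n$ is defined on the larger domain $\Omega_n$: the kernel $V$ of $\{\widehat\Omega_n\}$ contains $\widehat\Omega$ together with the totally disconnected set $(\bigcap_n\Omega_n)\setminus\Omega$, so one extracts locally uniform convergence on $V$ and the limit $f$ is conformal \emph{across} the point components. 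Their images then form a totally disconnected closed set $E\subset\widehat D$ automatically (since $f$ is a homeomorphism of $V$), and a purely topological argument with Zoretti's theorem (Theorem \ref{theorem:zoretti}) identifies the complementary components of $f(\widehat\Omega)$ as the components of $\widehat{\C}\setminus\widehat D$ together with the singletons of $E$. For (i), instead of matching boundary components and using relative separation to prevent merging, the paper applies the Carath\'eodory kernel theorem (Theorem \ref{theorem:caratheodory}) together with the compactness statement that Hausdorff limits of $A$-uniform circle domains are $A$-uniform circle domains (Lemma \ref{lemma:uniform:convergence}, Corollary \ref{corollary:circle}), which also delivers the quantitative uniformity of $D$ in the same stroke. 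To make your argument work you should run the normal-families step on $V$ rather than on $\Omega$ and replace the removability/capacity appeal by this topological identification; as written, step (a) does not go through.
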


As an application, we give a quasisymmetric characterization of inner uniformity. 

\begin{theorem}\label{theorem:qs_characterization}
Let $\Omega\subset \C$ be a domain that contains a neighborhood of $\infty$ and denote by $\rho_{\Omega}$ the inner diameter metric in $\Omega$. Then $\Omega$ is a Euclidean inner uniform domain if and only if $(\Omega,\rho_\Omega)$ is quasisymmetrically equivalent to a Euclidean uniform circle domain in $\C$. The statement is quantitative.
\end{theorem}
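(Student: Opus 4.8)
The plan is to treat the two implications separately after a common metric reformulation. The key elementary remark is that for any domain $\Omega$ and any $z\in\Omega$ the open ball $B(z,\dist(z,\partial\Omega))$ is contained in $\Omega$; hence the open segment from $z$ to a nearest point of $\partial\Omega$ lies in $\Omega$, which gives $\dist_{\rho_\Omega}(z,\partial_\rho\Omega)=\dist(z,\partial\Omega)$, where $\partial_\rho\Omega$ denotes the metric completion of $(\Omega,\rho_\Omega)$ with $\Omega$ removed. Together with the facts that $\ell_{\rho_\Omega}(\gamma)=\ell(\gamma)$ for every curve $\gamma$ lying in $\Omega$, and that $\rho_\Omega$ agrees with the Euclidean metric on each ball $B(z,\dist(z,\partial\Omega))$, this shows that $\Omega$ is a Euclidean inner uniform domain precisely when $(\Omega,\rho_\Omega)$ is a uniform metric space, the metric completion playing the role of the closure. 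The same remarks, combined with the bi-Lipschitz equivalence $\rho_D\asymp|\cdot|$ valid on any Euclidean uniform domain $D$, show that a Euclidean uniform circle domain, with either its Euclidean or its inner metric, is a uniform metric space whose completion is its Euclidean closure. So the theorem becomes: $(\Omega,\rho_\Omega)$ is a uniform metric space if and only if it is quasisymmetrically equivalent to a planar uniform circle domain.

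For the ``only if'' direction, assume $\Omega$ is inner uniform. Since $\Omega$ contains a neighborhood of $\infty$, Theorem \ref{theorem:uniformization_euclidean} gives a conformal map $f\colon\Omega\to D$ onto a Euclidean uniform circle domain $D\subset\C$, with $f(\infty)=\infty$ and the constant of $D$ depending only on that of $\Omega$. It remains to show that $f$ is a quasisymmetry from $(\Omega,\rho_\Omega)$ onto $(D,|\cdot|)$; by the comparability above this is equivalent to $f$ being a quasisymmetry from $(\Omega,\rho_\Omega)$ onto $(D,\rho_D)$, an instance of the statement that a conformal map between Euclidean inner uniform domains is a quasisymmetry for the inner diameter metrics and extends to a quasisymmetry of the metric completions. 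I would derive this from the Koebe distortion theorem, which controls $|f'|$ on each ball $B(z,\dist(z,\partial\Omega))$, integrated along inner geodesics by means of the geometry of inner uniform domains from Section \ref{section:inner_euclidean} (uniform relative separation and bounded turning of the complementary components); equivalently, one can route it through the conformal invariance of the quasihyperbolic metric and its comparability, on inner uniform domains, with the inner $j$-metric $\log(1+\rho_\Omega(x,y)/\min(\dist(x,\partial\Omega),\dist(y,\partial\Omega)))$. Either way, $(\Omega,\rho_\Omega)$ is quasisymmetrically equivalent to the uniform circle domain $D$.

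For the ``if'' direction, let $g\colon(\Omega,\rho_\Omega)\to(D,|\cdot|)$ be a quasisymmetry onto a Euclidean uniform circle domain; we must show $(\Omega,\rho_\Omega)$ is a uniform metric space. Replace $(D,|\cdot|)$ by $(D,\rho_D)$, so that $g$ is a quasisymmetry between two \emph{length} spaces that are, at interior points, locally isometric to planar domains; in particular $g$ is locally quasisymmetric between Euclidean balls, so the underlying homeomorphism $g\colon\Omega\to D$ is quasiconformal, and $g$ extends to a quasisymmetry of the metric completions carrying $\partial_\rho\Omega$ onto $\partial D$. Because $(\Omega,\rho_\Omega)$ is a length space, the quasiconvexity clause of uniformity is automatic, so it suffices to join an arbitrary pair $x,y\in\Omega$ by a curve in $\Omega$ of $\rho_\Omega$-length comparable to $\rho_\Omega(x,y)$ satisfying the cigar condition. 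I would take a uniform curve $\eta$ in $D$ joining $g(x)$ and $g(y)$ and set $\gamma=g^{-1}\circ\eta$: the cigar estimate and the quasiconvexity of $\eta$ in $D$, together with the identity $\dist_{\rho_\Omega}(\cdot,\partial_\rho\Omega)=\dist(\cdot,\partial\Omega)$ and the bounded distortion of $g^{-1}$ on families of comparably sized and comparably located balls, should propagate to the required estimates for $\gamma$ in $\Omega$, with constants depending only on the uniformity constant of $D$ and the distortion function of $g$. Concretely, one decomposes $\eta$ into generations according to $\dist(\cdot,\partial D)$, estimates the Euclidean diameter of the $g^{-1}$-image of each generation by quasisymmetry, and sums, using that on the shorter side of $\eta(t)$ the length of $\eta$ is controlled by $\dist(\eta(t),\partial D)$.

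The main obstacle is precisely this last propagation step: transferring the cigar (twisted double cone) condition across a quasisymmetry, which distorts lengths uncontrollably at large scales. Carrying it out requires localizing to scales comparable to the distance to the boundary and using, in tandem with the generation decomposition, the bounded turning and uniform relative separation of the complementary components of $D$ — which, being quasi-M\"obius and quasisymmetric invariants, transfer to $\Omega$ under $g$. A secondary difficulty, in the ``only if'' direction, is the quasisymmetric boundary extension of a conformal map between inner uniform domains, which is exactly where the geometry established in Section \ref{section:inner_euclidean} enters.
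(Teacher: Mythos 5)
Your overall skeleton is the right one (and matches the paper's): uniformize via Theorem \ref{theorem:uniformization_euclidean} for the forward direction, and pull back uniform curves through the quasisymmetry for the converse. But both directions are left resting on steps you explicitly do not carry out, and the one you call ``the main obstacle'' --- transferring the cigar condition across a quasisymmetry, which ``distorts lengths uncontrollably'' --- is a genuine gap in your write-up. The missing idea is that you should never work with length cigars here at all: by V\"ais\"al\"a's characterization (Theorem \ref{theorem:vaisala}), inner uniformity is quantitatively equivalent to the existence of $(A,\rho_\Omega)$-uniform curves defined via \emph{diameters}, i.e.\ $d(\gamma)\le A\rho_\Omega(\gamma(0),\gamma(1))$ together with the diameter cigar $\cig_d(\gamma,A)\subset\Omega$, and diameters (unlike lengths) transfer cleanly under a quasisymmetry between $(\Omega,\rho_\Omega)$ and $(D,\rho_D)$. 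This is exactly Lemma \ref{lemma:qs_uniform_curves}, which has a short direct proof; combined with the fact that $\rho_D$ is comparable to the Euclidean metric on a uniform (hence bounded turning) domain $D$, the converse direction is immediate: take $(A,\rho_D)$-uniform curves in $D$ (Theorem \ref{theorem:vaisala}), push them through $f^{-1}$, and apply Theorem \ref{theorem:vaisala} again. Your proposed substitute --- a generation decomposition along $\dist(\cdot,\partial D)$ plus quasisymmetric/quasi-M\"obius invariance of relative separation and bounded turning of the complementary components --- is in effect an attempt to reprove V\"ais\"al\"a's length-versus-diameter equivalence by hand, and as written it is only a sketch; also note that $(\Omega,\rho_\Omega)$ is not a length space in general (its induced length metric is $\lambda_\Omega$), so the claim that the quasiconvexity clause is ``automatic'' needs care, though you do then state the correct goal.

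In the forward direction there is a parallel gap: after obtaining the conformal map $f\colon\Omega\to D$ onto a uniform circle domain, you still must show $f\colon(\Omega,\rho_\Omega)\to D$ is quasisymmetric, and you only sketch a route via Koebe distortion along inner geodesics, or via comparability of the quasihyperbolic and inner $j$-metrics; neither is carried out, and converting quasihyperbolic bi-Lipschitz control into a global quasisymmetry in the inner metrics is precisely the nontrivial content you would need to supply. The paper closes this with Theorem \ref{theorem:conformal_inner_uniform}: inner uniform domains are broad (Loewner in $\rho_\Omega$), circle domains are $1$-LLC (Lemma \ref{lemma:circle_llc}), Heinonen's theorem then gives weak quasisymmetry in $(\rho_\Omega,\rho_D)$, and doubling upgrades this to quasisymmetry. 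So the fix for your proposal is not a new construction but to invoke the two tools already established earlier in the paper --- Theorem \ref{theorem:conformal_inner_uniform} for ``only if'' and Lemma \ref{lemma:qs_uniform_curves} together with Theorem \ref{theorem:vaisala} for ``if'' --- at the two points you currently flag as obstacles.
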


See Section \ref{section:inner_euclidean} for the definition of the inner diameter metric $\rho_{\Omega}$. Both above theorems are quantitative in the usual sense; namely the parameters in the conclusions depend only on the parameters in the assumptions and not on the particular domain $\Omega$ or map $f$. For versions of the above theorems for bounded domains, see Theorem \ref{theorem:bounded_uniformization} and Remark \ref{remark:qs_bounded}.

The equivalence in Theorem \ref{theorem:qs_characterization} was already known in the case of simply connected domains (see \cite{Heinonen:John}*{Theorem 3.1}, \cite{Vaisala:cylindrical}*{Theorem 2.20}, \cite{NakkiVaisala:john}*{Theorems 3.6, 3.9}, and Lemma \ref{lemma:qs_uniform_curves} below). Namely, the following are equivalent for a bounded simply connected domain $\Omega\subset \C$.
\begin{enumerate}
\item $\Omega$ is {John domain}.
\item Every conformal map $f\colon \D\to (\Omega,\rho_{\Omega})$ is quasisymmetric.
\item $\Omega$ is an inner uniform domain.
\end{enumerate}
However, for infinitely connected domains, inner uniformity is strictly stronger than the John property \cite{Vaisala:inner}*{Example 2.18}.

\section{Euclidean inner uniform domains}\label{section:inner_euclidean}

If $a$ is a parameter, we use the notation $C(a)$ for a positive constant that depends only on the parameter $a$. 
A \textit{curve} in a metric space $X$ is a continuous map $\gamma$ from a compact interval into $X$. The \textit{trace} of a curve $\gamma$ is its image and is denoted by $|\gamma|$. We define $\ell(\gamma)$ to be the length of $\gamma$ and $d(\gamma)$ to be the diameter of $|\gamma|$.

In this section we use exclusively the Euclidean metric on subsets of the plane. Let $\Omega\subset \R^2$ be a domain. For $a,b\in \Omega$ the \textit{inner metric} $\lambda_{\Omega}(a,b)$ is defined as the infimum of the length $\ell(\gamma)$ over all paths $\gamma\colon [0,1]\to \Omega$ such that $\gamma(0)=a$ and $\gamma(1)=b$. The \textit{inner diameter metric} $\rho_{\Omega}(a,b)$ is defined as the infimum of the diameter $d(\gamma)= \diam \gamma([0,1])$ over all paths $\gamma$ as above. For all $a,b\in \Omega$, we have
$$|a-b|\leq \rho_{\Omega}(a,b)\leq \lambda_{\Omega}(a,b).$$

Let $A\geq 1$. We say that a curve $\gamma\colon [0,1]\to \Omega$ is $(A,\lambda_{\Omega})$-\textit{uniform} (with respect to $\Omega$) or equivalently \textit{inner $A$-uniform} (with respect to $\Omega$) if 
$$\ell(\gamma)\leq A \lambda_{\Omega}(\gamma(0),\gamma(1))\quad \textrm{and}\quad \min\{\ell (\gamma|_{[0,t]}), \ell(\gamma|_{[t,1]} )\} \leq A \dist(\gamma(t),\partial \Omega),$$
for all $t\in [0,1]$. We define the \textit{length $A$-cigar of} $\gamma$ to be
$$\cig_{\ell}(\gamma, A)=\bigcup_{t\in [0,1]} B(\gamma(t), A^{-1}\min\{\ell (\gamma|_{[0,t]}), \ell(\gamma|_{[t,1]} )\}).$$
So, the second condition in the definition of an inner uniform curve is equivalent to the condition that $\cig_\ell(\gamma,A)\subset \Omega$. A domain $\Omega\subset \R^2$ is \textit{inner $A$-uniform} if for every pair of points $a,b\in \Omega$ there exists an inner $A$-uniform curve $\gamma\colon [0,1]\to \Omega$ with $\gamma(0)=a$ and $\gamma(1)=b$. To emphasize the metric, in that case we say that $\Omega$ is $(A,\lambda_{\Omega})$-uniform. We say that $\Omega$ is an inner uniform domain if it is inner $A$-uniform for some $A\geq 1$.

We say that a curve $\gamma\colon [0,1]\to \Omega$ is $(A,\rho_{\Omega})$\textit{-uniform} (with respect to $\Omega$) if 
$$d(\gamma)\leq A \rho_{\Omega}(\gamma(0),\gamma(1))\quad \textrm{and}\quad \min\{d (\gamma|_{[0,t]}), d(\gamma|_{[t,1]} )\} \leq A \dist(\gamma(t),\partial \Omega)$$
for all $t\in [0,1]$. The second condition is equivalent to requiring that $\cig_d(\gamma,A)\subset \Omega$, where the \textit{diameter $A$-cigar of $\gamma$} is defined by
$$\cig_{d}(\gamma, A)=\bigcup_{t\in [0,1]} B(\gamma(t), A^{-1}\min\{d(\gamma|_{[0,t]}), d(\gamma|_{[t,1]} )\}).$$
When we say that an arbitrary curve $\gamma$, is an $(A,\rho_{\Omega})$-uniform curve, it is implicitly understood that $\gamma$ is an $(A,\rho_{\Omega})$-uniform curve with respect to the domain $\Omega$ that appears a subscript in $\rho_{\Omega}$. We will use an equivalent definition for inner uniform domains in Euclidean space due to V\"ais\"al\"a, which allows us to replace lengths with diameters.
\begin{theorem}[\cite{Vaisala:inner}*{Theorem 3.11}]\label{theorem:vaisala}
Let $\Omega\subset \R^2$ be a domain. The following are quantitatively equivalent.
\begin{enumerate}[label=\normalfont(\arabic*)]
	\item $\Omega$ is an inner uniform domain.
	\item There exists $A\geq 1$ such that for every pair of points $a,b\in \Omega$ there exists an $(A,\rho_{\Omega})$-inner uniform curve $\gamma\colon [0,1]\to \Omega$ with $\gamma(0)=a$, $\gamma(1)=b$.
\end{enumerate}
\end{theorem}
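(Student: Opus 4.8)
The plan is to prove the two implications separately. The direction $(1)\Rightarrow(2)$ is essentially formal: given $a,b\in\Omega$ and an inner $A$-uniform curve $\gamma$ joining them, the inclusion $\cig_d(\gamma,A)\subset\Omega$ is immediate from $\cig_d(\gamma,A)\subset\cig_\ell(\gamma,A)$ since $d(\gamma|_I)\le\ell(\gamma|_I)$ for every subinterval $I$, and the remaining turning estimate $d(\gamma)\le A'\rho_\Omega(\gamma(0),\gamma(1))$ follows from $d(\gamma)\le\ell(\gamma)\le A\lambda_\Omega(\gamma(0),\gamma(1))$ once one knows the comparability $\lambda_\Omega\le C(A)\rho_\Omega$. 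I would first record this comparability as a lemma; it is standard and holds in every inner uniform domain (it follows, for instance, from a quasihyperbolic characterization of inner uniform domains that does not distinguish $\rho_\Omega$ from $\lambda_\Omega$).

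The real content is the direction $(2)\Rightarrow(1)$, where an $(A,\rho_\Omega)$-uniform curve need not be rectifiable and so must be replaced by a curve of controlled length. Fix $a,b\in\Omega$ and an $(A,\rho_\Omega)$-uniform curve $\gamma\colon[0,1]\to\Omega$ with $\gamma(0)=a$, $\gamma(1)=b$, and set $D=d(\gamma)\le A\rho_\Omega(a,b)$. First I would locate a waist: $t\mapsto d(\gamma|_{[0,t]})$ is continuous and nondecreasing from $0$ to $D$, while $t\mapsto d(\gamma|_{[t,1]})$ is continuous and nonincreasing from $D$ to $0$, so there is $t_0$ with $d(\gamma|_{[0,t_0]})=d(\gamma|_{[t_0,1]})=:s$, and subadditivity of the diameter forces $D/2\le s\le D$; the cigar condition then gives a ball $B(\gamma(t_0),A^{-1}s)\subset\Omega$ of radius comparable to $D$. (If $\dist(a,\partial\Omega)$, $\dist(b,\partial\Omega)$, or $\dist(\gamma(t_0),\partial\Omega)$ exceeds a suitable multiple of $D$, then $a$ and $b$ both lie in a ball of radius $\gtrsim_A D$ contained in $\Omega$ and the segment $[a,b]$ is already an inner uniform curve; so we may assume all three are $\le C(A)D$.) Moreover, for $t\le t_0$ one has $d(\gamma|_{[0,t]})\le s\le d(\gamma|_{[t,1]})$, so the minimum in the cigar is realized by the initial arc, whence $\dist(\gamma(t),\partial\Omega)\ge A^{-1}d(\gamma|_{[0,t]})\ge A^{-1}|\gamma(t)-a|$ for $t\le t_0$; that is, $\gamma|_{[0,t_0]}$ is a \emph{John carrot} rooted at $a$, and symmetrically $\gamma|_{[t_0,1]}$ is a John carrot rooted at $b$.

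The key step is a standard carrot-straightening lemma: if $x,y\in\Omega$ are joined in $\Omega$ by a curve along which $\dist(z,\partial\Omega)\ge c\,|z-x|$ at every point $z$, then they can also be joined by a curve $\alpha$ with $\ell(\alpha)\le C(c)\max\{|y-x|,\dist(x,\partial\Omega)\}$ that in addition satisfies the one-sided length cigar estimate $\ell(\alpha|_{[0,t]})\le C(c)\dist(\alpha(t),\partial\Omega)$ for all $t$. Applying this to the two carrot halves found above, with $x\in\{a,b\}$ and $y=\gamma(t_0)$, and concatenating the resulting curves at $\gamma(t_0)$, produces a curve $\alpha$ from $a$ to $b$ with $\ell(\alpha)\le C(A)D\le C(A)\rho_\Omega(a,b)\le C(A)\lambda_\Omega(a,b)$. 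To verify its length cigar estimate, split at the concatenation point: if a parameter $t$ lies in the half of $\alpha$ coming from $a$, then $\ell(\alpha|_{[0,t]})$ is the length of an initial sub-arc of the straightened $a$-carrot and is therefore $\le C(A)\dist(\alpha(t),\partial\Omega)$, and symmetrically on the $b$-side, so $\min\{\ell(\alpha|_{[0,t]}),\ell(\alpha|_{[t,1]})\}\le C(A)\dist(\alpha(t),\partial\Omega)$ for every $t$. Hence $\alpha$ is inner $C(A)$-uniform and $\Omega$ is inner uniform, quantitatively; as a byproduct the argument also re-derives $\lambda_\Omega\le C(A)\rho_\Omega$ under hypothesis (2).

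I expect the main obstacle to be the carrot-straightening lemma: the replacement curve is built by following the given carrot only through a chain of Whitney-type balls issuing from $x$ with radii comparable to their distance from $x$, short-cutting across each ball by a segment, and the delicate point is to bound the total length when $x$ lies very close to $\partial\Omega$ — so that many balls of geometrically increasing size are traversed — while still confirming that the short-cut curve obeys the one-sided length cigar estimate. This is a routine but technical argument of a type familiar from the theory of John and uniform domains. The comparability $\lambda_\Omega\asymp\rho_\Omega$ needed for $(1)\Rightarrow(2)$ is a minor second point that nonetheless has to be supplied.
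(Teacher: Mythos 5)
You should first note that the paper does not prove this statement at all: it is imported verbatim from V\"ais\"al\"a \cite{Vaisala:inner}*{Theorem 3.11}, so your attempt can only be measured against the literature, not against an in-paper argument. Your direction $(2)\Rightarrow(1)$ follows the standard route and looks sound: the waist point $t_0$ splitting the given curve into two distance-carrots rooted at $a$ and $b$ (the monotonicity/continuity argument and the bound $D/2\le s\le D$ are correct), the straightening of each carrot through a chain of balls of radius comparable to the distance from the root, and the gluing of the two straightened carrots at $\gamma(t_0)$, whose one-sided length-cigar estimates combine to give the full cigar, together with $\ell(\alpha)\le C(A)d(\gamma)\le C(A)\rho_\Omega(a,b)\le C(A)\lambda_\Omega(a,b)$. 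Leaving the carrot-straightening lemma as ``routine but technical'' is acceptable: it is classical in John/uniform-domain theory and you identify the correct construction and the correct delicate point.

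The genuine gap is in $(1)\Rightarrow(2)$. You reduce it to the comparability $\lambda_\Omega\le C(A)\rho_\Omega$ and justify that lemma by saying it ``follows from a quasihyperbolic characterization of inner uniform domains that does not distinguish $\rho_\Omega$ from $\lambda_\Omega$.'' That justification fails quantitatively: such characterizations have the form $k_\Omega(a,b)\le c_1\log\bigl(1+\frac{\lambda_\Omega(a,b)}{m}\bigr)+c_2$ with $m=\min\{\dist(a,\partial\Omega),\dist(b,\partial\Omega)\}$, and playing the (hypothetical) $\rho_\Omega$-version of this upper bound against the universal lower bound $k_\Omega(a,b)\ge\log\bigl(1+\frac{\lambda_\Omega(a,b)}{m}\bigr)$ only gives $\lambda_\Omega\le C\,m\bigl(1+\rho_\Omega(a,b)/m\bigr)^{c_1}$, a power-type bound that is not the linear bound needed for the turning condition $d(\gamma)\le A'\rho_\Omega(a,b)$ in the regime $\rho_\Omega\gg m$; moreover, a $\rho_\Omega$-flavoured quasihyperbolic characterization is itself normally derived from the very equivalence you are proving, so the reduction is circular as stated. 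The comparability $\lambda_\Omega\le C(A)\rho_\Omega$ is true, but it essentially \emph{is} the content of the $(1)\Rightarrow(2)$ direction: your own machinery re-derives it only under hypothesis (2), where the given curve carries a diameter cigar that supplies the carrots, whereas under hypothesis (1) the small-diameter competitor realizing $\rho_\Omega(a,b)$ has no cigar structure and the inner uniform curve is controlled only by $\ell(\gamma)\le A\lambda_\Omega(a,b)$, which is exactly the quantity you are trying to bound. So this half of the theorem needs a genuine argument (or an honest citation of V\"ais\"al\"a's corresponding lemma), not the formal reduction you propose.
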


For $A\geq 1$ a curve $\gamma\colon [0,1]\to \Omega$ is called an $A$\textit{-uniform curve} if 
$$\ell(\gamma)\leq A|\gamma(0)-\gamma(1)| \quad \textrm{and}\quad \cig_{\ell}(\gamma,A)\subset \Omega.$$
A domain $\Omega\subset \R^2$ is \textit{uniform} if there exists $A\geq 1$ such that for every pair of points $a,b\in \Omega$ there exists an $A$-uniform curve $\gamma\colon [0,1]\to \Omega$ with $\gamma(0)=a$ and $\gamma(1)=b$. In that case we say that $\Omega$ is an $A$-uniform domain.

\subsection{Geometry of inner uniform domains}\label{section:inner_geometry}
We establish two fundamental geometric properties of inner uniform domains.

\begin{theorem}[Separation of boundary components]
\label{theorem:relative_distance}
Let $A\geq 1$ and $\Omega\subset \R^2$ be an inner $A$-uniform domain. Then for every pair of components $S_1,S_2$ of $\R^2\setminus \Omega$ we have
$$\min \{\diam S_1,\diam S_2\}\leq C(A) \dist(S_1,S_2).$$
In particular, there exists at most one unbounded component of $\R^2\setminus \Omega$.
\end{theorem}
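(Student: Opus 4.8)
The plan is to argue by contradiction. Normalize so that $\diam S_1\le \diam S_2$, put $D=\diam S_1$ and $d=\dist(S_1,S_2)$, and suppose $D\ge C_0(A)\,d$ for a large constant $C_0(A)$ to be fixed at the end (if $D<C_0(A)\,d$ there is nothing to prove; we allow $D=\infty$, i.e.\ $S_1$ — and hence $S_2$ — unbounded, which will also settle the last assertion, since then the desired inequality would read $\infty\le C_0(A)\,d<\infty$). Fix $p_i\in S_i$ with $|p_1-p_2|=d$. First I would use connectedness of $S_1$ and $S_2$ to manufacture a long thin ``channel'' near $p_1$: since each $S_i$ has diameter $\ge D\gg d$ and contains $p_i$ with $|p_1-p_2|=d$, the boundary-bumping lemma supplies, at a suitably chosen scale $\rho$ with $d\ll\rho\lesssim D$, disjoint subcontinua $\sigma_i\subseteq S_i$ joining $p_i$ to $\partial B(p_1,\rho)$; note $\dist(\sigma_1,\sigma_2)=d$, as it is at least $\dist(S_1,S_2)=d$ and at most $|p_1-p_2|=d$. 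The set $K:=\sigma_1\cup[p_1,p_2]\cup\sigma_2$ is a continuum of empty interior meeting $\partial B(p_1,\rho)$ in at least two points, so by planar topology $B(p_1,\rho)\setminus K$ is disconnected; I would select a ``channel'' component $G$ whose relative boundary lies in $\sigma_1\cup\sigma_2\cup[p_1,p_2]$. The point of the construction is that $\partial G\cap\Omega\subseteq[p_1,p_2]$, so every point of $\partial G$ that lies in $\Omega$ is within $d/2$ of $\partial\Omega$: where the channel meets $\Omega$ it is \emph{thin}.

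Next I would pick two points $a,b\in G\cap\Omega$ lying deep in the channel — at distance $\gtrsim\rho$, hence $\gg Ad$, from the segment $[p_1,p_2]$, and far apart within $G$ — and take an inner $A$-uniform curve $\gamma\colon[0,1]\to\Omega$ from $a$ to $b$. Because $a,b\in G$, either $\gamma$ leaves $G$ only by reaching $\partial B(p_1,\rho)$ — so $\ell(\gamma)\gtrsim\rho$, and then the comparison $\ell(\gamma)\le A\lambda_\Omega(a,b)$ together with a competitor path forces a contradiction — or $\gamma$ first leaves $G$ at a point $w\in\partial G\cap\Omega\subseteq[p_1,p_2]$, so $\dist(w,\partial\Omega)\le d/2$ and the cigar condition gives
\[
\min\bigl\{\ell(\gamma|_{[0,t_w]}),\ \ell(\gamma|_{[t_w,1]})\bigr\}\le A\,\dist(w,\partial\Omega)\le \tfrac{Ad}{2}.
\]
But $\ell(\gamma|_{[0,t_w]})\ge|a-w|\ge\dist(a,[p_1,p_2])\gtrsim\rho\gg Ad$, forcing $\ell(\gamma|_{[t_w,1]})\le Ad/2$ and hence $|b-w|\le Ad/2$; this contradicts $\dist(b,[p_1,p_2])\gtrsim\rho$. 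Either way we contradict $D\ge C_0(A)\,d$ once $C_0(A)$ is large enough, proving $D\le C_0(A)\,d=:C(A)\dist(S_1,S_2)$; and two unbounded complementary components are impossible for the reason noted above.

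I expect the main obstacle to be the construction in the first paragraph and the accompanying choice of $a,b$, i.e.\ converting ``$S_1$ and $S_2$ are close relative to their diameters'' into a genuine, quantitatively thin channel together with suitably placed trapped points. One must in particular control the possibility that the segment $[p_1,p_2]$ meets other complementary components, and the possibility that the region between $\sigma_1$ and $\sigma_2$ ceases to be thin as it moves away from $p_1$; this is why the scale $\rho$ and the component $G$ have to be chosen with care rather than, say, always taking $\rho\asymp D$, and it is also where the first ``escape'' case above gets absorbed. By contrast, the cigar computation itself is routine. Finally, note that the argument uses the full strength of inner uniformity — the length comparison $\ell(\gamma)\le A\lambda_\Omega(\gamma(0),\gamma(1))$, not merely the existence of a cigar curve — since John domains, in which every pair is joined by \emph{some} cigar curve, need not have relatively separated complementary components.
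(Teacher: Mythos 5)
Your cigar computation at the crossing point is the right local estimate (it is essentially the computation in the paper), but the construction that is supposed to produce the trapped points $a,b$ and force the uniform curve near the gap has genuine gaps. First, the claim that $K=\sigma_1\cup[p_1,p_2]\cup\sigma_2$ has empty interior is unjustified: $\sigma_i\subseteq S_i$ are subcontinua of arbitrary complementary components, which may be fat (e.g.\ closed disks). The disconnection criterion you invoke is also false as stated (a radius together with a sub-arc of $\partial B(p_1,\rho)$ is a continuum with empty interior meeting the circle in more than one point whose complement in the ball is connected), so the existence of your channel needs a real argument. More importantly, even when $B(p_1,\rho)\setminus K$ is disconnected, no component $G$ need satisfy $\partial G\cap\Omega\subseteq[p_1,p_2]$: the boundary of $G$ in general contains arcs of $\partial B(p_1,\rho)$, which can lie deep inside $\Omega$; nor is it established that $G\cap\Omega$ contains points at distance $\gtrsim\rho$ from $[p_1,p_2]$ and well inside the ball.

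The fatal gap, however, is the case analysis for $\gamma$. You place both $a$ and $b$ in the \emph{same} component $G$, so nothing forces the uniform curve to leave $G$ at all; and in the escape case you assert a contradiction from $\ell(\gamma)\le A\lambda_\Omega(a,b)$ ``together with a competitor path'' without exhibiting one. In general no short competitor exists, because every $\Omega$-path from $a$ to $b$ faces the same obstacles: for instance, if the piece of $S_1$ crossing the ball terminates just outside $B(p_1,\rho)$ (with the bulk of $S_1$ lying elsewhere), a path can slip around that tip at distance comparable to $\rho$ from $\partial\Omega$ with length comparable to $\rho$, so $\ell(\gamma)\asymp\lambda_\Omega(a,b)\asymp\rho$ and neither the cigar nor the quasiconvexity condition is violated; no choice of the scale $\rho$ in your range fixes this. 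This wrap-around scenario is exactly where the paper's proof supplies the missing topological mechanism: the two endpoints come from a Jordan curve, produced by Zoretti's theorem, that globally separates $S_1$ from $S_2$; one selects a sub-arc of it inside $B(z_0,pr)$ that cannot be homotoped off the bridge $L$ rel endpoints in $\R^2\setminus(S_1\cup S_2)$; then the loop formed by this sub-arc and the inner uniform curve $\alpha$ joining its endpoints is either essential, in which case a winding-number argument shows it encloses $S_1$ or $S_2$ and the diameter bound $d(\alpha)+d(\gamma)\le 2(A+1)pr<\min_i\diam S_i$ gives a contradiction (this is what kills wrap-arounds, via the scale choice $p=H/(A+1)$), or inessential, in which case $\alpha$ must meet $L$ and the cigar estimate applies. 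Your sketch has no analogue of this dichotomy, so the argument does not close; note also that the paper proves the statement first as a scaled lemma for components of $\br B(z_0,R)\cap(\R^2\setminus\Omega)$ and deduces the unbounded case by letting $R\to\infty$, which is the rigorous version of your remark about $D=\infty$.
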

The conclusion is special to two dimensions and is not true in higher dimensions even in uniform domains. A proof of that result in the case of uniform domains was provided by Gehring \cite{Gehring:injectivity}*{Lemma 3} and relies on the fact that uniform domains are \textit{annularly linearly connected}, a property that fails in inner uniform domains. 

\begin{theorem}[Bounded turning of complement]\label{theorem:bounded_turning}
Let $A\geq 1$ and $\Omega\subset \R^2$ be an inner $A$-uniform domain. Then each component $S$ of $\R^2\setminus \Omega$ has $C(A)$-bounded turning.
\end{theorem}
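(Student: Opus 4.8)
\emph{Proof proposal.} The natural approach is by contradiction. Suppose the conclusion fails badly: there are points $x,y$ in a component $S$ of $\R^2\setminus\Omega$ with $|x-y|=r$ such that every subcontinuum of $S$ containing both $x$ and $y$ has diameter greater than $Cr$, where $C=C(A)$ is a large constant to be fixed at the end. A preliminary reduction lets us assume $x,y\in\partial\Omega$: if $x$ lies in the interior of $S$ we replace it by a nearest point of $\partial S\subset\partial\Omega$, reached from $x$ by a segment contained in $S$, and this only affects the constant (and if $\dist(x,\partial S)>|x-y|$ then $[x,y]\subset S$ and there is nothing to prove). Set $R=C'r$, where $C'$ is a suitable fraction of $C$, and let $K_x$ and $K_y$ be the components of $S\cap\overline B(x,R)$ containing $x$ and $y$. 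Since the assumption forces $\diam S>Cr>2R$, the boundary bumping lemma shows that $K_x$ and $K_y$ each meet $\partial B(x,R)$; hence they are distinct and disjoint, each has diameter at least $R/2$, while $\dist(K_x,K_y)\le|x-y|=r$.

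The crux is to locate inside $\Omega$ a \emph{room} joined to the rest of $\Omega$ only through a \emph{narrow door}. Using that $K_x$ and $K_y$ lie within distance $r$ of each other, together with the inner $A$-uniformity of $\Omega$ applied near the pair $(x,y)$ — and the fact that the boundary of an inner uniform domain is locally connected, so that boundary points are joined to interior points by short curves — one would construct a crosscut $\tau$ of $\Omega$ (a Jordan arc with $\tau\setminus\{\textrm{endpoints}\}\subset\Omega$) whose two endpoints lie on $K_x$ and on $K_y$, with $\diam\tau\le C_1(A)\,r$; in particular $\dist(z,\partial\Omega)\le C_1(A)\,r$ for every $z\in\tau$, so $\tau$ is thin. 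Since both endpoints of $\tau$ lie on the single boundary component $S$, the crosscut separates $\Omega$. Moreover — and this is where the hypothesis that $x$ and $y$ are far apart inside $S$, together with the size of $K_x,K_y$, enters — one shows that $\Omega\setminus\tau$ has two distinct components $W_1,W_2$ each containing a point of $\Omega$ at Euclidean distance at least $R/4$ from $\tau$: for instance $W_2$ contains a point near the part of $S$ lying outside $\overline B(x,R)$, and $W_1$ a point near a far point of $K_x$ (or $K_y$) accessible from $W_1$.

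Granting this, the contradiction is immediate. Pick $a\in W_1$ and $b\in W_2$ with $\dist(a,\tau),\dist(b,\tau)\ge R/4$; since the inner metric dominates the Euclidean one, $\lambda_\Omega(a,\tau)\ge R/4$ and $\lambda_\Omega(b,\tau)\ge R/4$. Let $\gamma\colon[0,1]\to\Omega$ be an inner $A$-uniform curve from $a$ to $b$. As $a$ and $b$ lie in different components of $\Omega\setminus\tau$, the curve $\gamma$ meets $\tau$, say $\gamma(t_1)\in\tau$; then $\dist(\gamma(t_1),\partial\Omega)\le C_1(A)\,r$, whereas
$$\ell(\gamma|_{[0,t_1]})\ge\lambda_\Omega(a,\gamma(t_1))\ge\lambda_\Omega(a,\tau)\ge R/4,\qquad \ell(\gamma|_{[t_1,1]})\ge\lambda_\Omega(b,\tau)\ge R/4.$$
The cigar inequality for the inner $A$-uniform curve at $t=t_1$ then gives $R/4\le A\,C_1(A)\,r$, i.e.\ $C'\le 4A\,C_1(A)$, which is absurd once $C$ (hence $C'$) is chosen large in terms of $A$. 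This contradiction shows that some subcontinuum of $S$ joining $x$ and $y$ has diameter at most $C(A)|x-y|$, i.e.\ $S$ has $C(A)$-bounded turning, quantitatively.

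The main obstacle is the construction in the second paragraph: producing a short crosscut $\tau$ bridging $K_x$ and $K_y$ needs a careful local use of inner uniformity and of local connectedness of $\partial\Omega$ near $(x,y)$, and checking that the two sides of $\tau$ both reach far from $\tau$ is a planar-topology argument resting on the boundary bumping lemma and on the fact that $K_x$ and $K_y$ are joined inside $S$ only outside $\overline B(x,R)$. One should also handle the case in which $S$ is unbounded — working on the Riemann sphere, or after an auxiliary inversion — and verify the reduction to $x,y\in\partial\Omega$.
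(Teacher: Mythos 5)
Your reduction is fine up to the point where you have two distinct components $K_x,K_y$ of $S\cap\overline B(x,R)$, each of diameter $\gtrsim R$, with $\dist(K_x,K_y)\le r$, and your final cigar contradiction would work \emph{if} you had the separating crosscut $\tau$ with the stated properties. But that crosscut construction is exactly the hard content of the theorem, and you leave it unproved: you give no mechanism for producing an arc in $\Omega$ of diameter $\le C_1(A)r$ with one endpoint on $K_x$ and one on $K_y$, nor for showing that $\Omega\setminus\tau$ has two components each reaching distance $\gtrsim R$ from $\tau$. Inner uniformity does not directly yield short arcs in $\Omega$ between Euclidean-close points near the boundary (two points at distance $\sim r$ on either side of a thin piece of $S$ can have enormous inner distance, so the $(A,\lambda_\Omega)$-curve between them is useless for diameter control), and the auxiliary fact you invoke --- that the boundary of an inner uniform domain is locally connected --- is false in general: a uniform circle domain whose boundary circles accumulate at a point (e.g.\ circles of radius $8^{-n}$ centered at $(2^{-n},0)$) has boundary that is not locally connected at the origin. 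Also, the separation statement for a crosscut with both endpoints on the same complementary component of a (possibly infinitely connected) domain, together with the existence of far points on both sides, is itself a nontrivial planar-topology assertion that you defer rather than prove.

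For comparison, note that your configuration $(K_x,K_y)$ is precisely the hypothesis of the paper's Lemma \ref{lemma:relative_distance} (components of $\overline B(x,R)\cap(\R^2\setminus\Omega)$, since a component of $S\cap\overline B(x,R)$ is automatically one of $(\R^2\setminus\Omega)\cap\overline B(x,R)$), which immediately gives $\min\{\diam K_x,\diam K_y\}\le C(A)\max\{\dist(x,K_x),\dist(x,K_y)\}\le C(A)r$ and hence the contradiction; that is the paper's whole proof of Theorem \ref{theorem:bounded_turning}. The content you are missing is the proof of that lemma, which the paper carries out not with a crosscut joining the two components but with a Jordan curve \emph{separating} them, obtained from Zoretti's theorem (Theorem \ref{theorem:zoretti}), plus a homotopy/winding-number argument that forces an $(A,\rho_\Omega)$-uniform curve between two points of $\Omega$ on a sphere $\partial B(z_0,pr)$ to pass close to the segment joining the components, after which the cigar condition gives the bound. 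So the proposal, as written, has a genuine gap at its central step, and the ingredient proposed to fill it (local connectedness of $\partial\Omega$) is not available.
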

Here we say that a metric space $(X,d_X)$ has \textit{bounded turning} if there exists a constant $L\geq 1$ such that for each pair $x,y\in X$ there exists a connected set $E\subset X$ that contains $x$ and $y$ with $\diam E\leq  L d_X(x,y)$. In this case we say that $X$ has $L$-bounded turning.  If, in addition, $X$ is locally compact, then one may take $E$ to be a path; see Lemma \ref{lemma:bt_definitions}. The conclusion of Theorem \ref{theorem:bounded_turning} is not true for $\partial S$ rather than $S$, as one can see by taking $\Omega$ be the complement of a long rectangle.

The key in the proof of Theorem \ref{theorem:relative_distance} and Theorem \ref{theorem:bounded_turning} is the next lemma. 

\begin{lemma}\label{lemma:relative_distance}
Let $A\geq 1$ and $\Omega\subset \R^2$ be an inner $A$-uniform domain. Let $z_0\in \R^2$, $R>0$, and consider a pair of components $S_1,S_2$ of $\br B(z_0,R)\cap (\R^2\setminus \Omega)$. Then
$$\min \{\diam S_1,\diam S_2\}\leq C(A)\max\{ \dist(z_0,S_1),\dist(z_0,S_2)\}. $$
\end{lemma}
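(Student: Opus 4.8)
The plan is to argue by contradiction. Suppose the conclusion fails with a large constant $M$ to be chosen depending only on $A$; so we have a ball $\br B(z_0,R)$ and two components $S_1, S_2$ of $\br B(z_0,R)\cap(\R^2\setminus \Omega)$ with $\min\{\diam S_1, \diam S_2\} > M \max\{\dist(z_0,S_1), \dist(z_0,S_2)\}$. Write $r_i = \dist(z_0, S_i)$ and $\delta = \min\{\diam S_1, \diam S_2\}$, so $\delta > M\max\{r_1, r_2\}$. In particular both $S_1$ and $S_2$ pass very close to $z_0$ (within $\max\{r_1,r_2\}$, which is tiny compared to $\delta$) yet each has diameter at least $\delta$; morally they both ``emanate'' from near $z_0$. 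Pick points $p_i \in S_i$ with $|p_i - z_0| = r_i$, and pick points $q_i \in S_i$ that are far out, say $|q_i - p_i|$ comparable to $\min\{\delta, R\}$ — one should be a little careful here because $\diam S_i$ is measured inside the closed ball, so points of $S_i$ near the sphere $\partial B(z_0,R)$ are available; take $q_i$ to be such a far point on $S_i$.

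Next I would join $p_1$ to $p_2$ by an inner $A$-uniform curve $\gamma$ in $\Omega$. The idea is that $p_1,p_2$ are both within distance roughly $\max\{r_1,r_2\}$ of $z_0$, hence within distance $2\max\{r_1,r_2\}$ of each other in the Euclidean metric; but we need an estimate on $\lambda_\Omega(p_1,p_2)$, not just $|p_1-p_2|$, and that is the subtlety — the inner distance could a priori be large. However, we are free to choose $p_i$ as \emph{any} point of $S_i$ inside the ball, and more importantly the uniform curve $\gamma$ satisfies the cigar condition $\cig_\ell(\gamma, A)\subset \Omega$, which forces every point $\gamma(t)$ to be at distance at least $A^{-1}\min\{\ell(\gamma|_{[0,t]}),\ell(\gamma|_{[t,1]})\}$ from $\partial\Omega$, in particular from $S_1$ and from $S_2$. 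The trace $|\gamma|$ is a connected set joining a point of $S_1$'s neighborhood to a point of $S_2$'s neighborhood; combined with the fact that $S_1$ and $S_2$ are two ``long'' continua reaching from near $z_0$ out to distance $\sim\delta$, a topological/separation argument in the plane should show that $\gamma$ must either come within a controlled distance of $S_1\cup S_2$ at a point where the cigar lower bound is already large (contradiction), or $\gamma$ must have large length $\ell(\gamma) \gtrsim \delta$ while $\lambda_\Omega(p_1,p_2)$ is forced to be small. To pin down $\lambda_\Omega(p_1,p_2)$: one can connect $p_1$ to $p_2$ by going along $S_1$ from $p_1$ a bit, stepping off into $\Omega$, and along $S_2$ — but this needs $\Omega$ to have room near $S_i$, which is exactly where the argument is delicate. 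A cleaner route: use that $|p_1 - p_2| \le 2\max\{r_1,r_2\} \le 2\delta/M$ and that \emph{some} curve realizing (nearly) $\lambda_\Omega(p_1,p_2)$ exists; bound $\lambda_\Omega(p_1,p_2)$ from above by constructing an explicit connecting path that hugs a third object (e.g. follow $S_1$ outward to radius $R$, cross over, follow $S_2$ back), giving $\lambda_\Omega(p_1,p_2) \le C(\diam S_1 + \diam S_2 + R)$, which is the wrong direction — so instead I expect the right move is to apply the lemma's hypotheses to the \emph{pair} $(p_1, q_2)$ or to exploit that, because $\ell(\gamma) \le A\lambda_\Omega(p_1,p_2)$ and the first-condition, the midpoint region of $\gamma$ has distance to $\partial\Omega$ at least $\sim \ell(\gamma)/A$; if $\gamma$ is trapped by the two long continua $S_1, S_2$ and the far sphere, its length is at least $\sim\delta$, forcing a ball of radius $\sim\delta/A^2$ inside $\Omega$ near the midpoint, and then relating that to $r_1,r_2$ via planar topology yields $\delta \le C(A)\max\{r_1,r_2\}$, the desired contradiction.

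More concretely, here is the step I would actually carry out. Let $\gamma$ be an inner $A$-uniform curve from $p_1$ to $p_2$, and let $w = \gamma(t_0)$ be a point with $\ell(\gamma|_{[0,t_0]}) = \ell(\gamma|_{[t_0,1]}) = \ell(\gamma)/2$. Then $\dist(w,\partial\Omega) \ge \ell(\gamma)/(2A) \ge \lambda_\Omega(p_1,p_2)/(2A) \ge |p_1 - p_2|/(2A)$; that last bound is useless when $|p_1-p_2|$ is small, so instead I use the first uniformity condition differently: the whole trace $|\gamma|$ lies in the $\lambda_\Omega(p_1,p_2)$-inner-neighborhood of $p_1$, hence (since inner distance dominates Euclidean distance) in the Euclidean ball $B(p_1, A\lambda_\Omega(p_1,p_2))$. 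Now I claim $\lambda_\Omega(p_1,p_2) \le C(A)\max\{r_1, r_2\}$: this is the heart of the matter and I would prove it by exhibiting a short path — using that $B(z_0, \dist(z_0,\partial\Omega))\subset\Omega$ is a ball of definite size, and $p_1,p_2$ lie on its boundary-ish region — wait, $\dist(z_0,\partial\Omega)$ could itself be tiny. The correct statement must be $\lambda_\Omega(p_1, p_2) \le C(A)(\diam S_1 \wedge \diam S_2)$ is what we want to \emph{contradict}, so I should not assume it. The hard part, as I see it, will be extracting from inner $A$-uniformity a usable \emph{upper} bound on $\lambda_\Omega$ between two points that sit near $z_0$ on two different boundary continua, when a priori no short curve in $\Omega$ connects them. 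I expect the resolution uses a \emph{chain} of balls: cover a near-geodesic from $p_1$ to $p_2$ (in $\lambda_\Omega$) by cigar balls of $\gamma$, note each such ball avoids $S_1\cup S_2$, and then a planar Jordan-curve argument shows the chain must ``go around'' one of the continua — but $S_i$ reaches to distance $\sim\delta \gg $ anything, so going around costs length $\gtrsim\delta$, giving $\ell(\gamma)\gtrsim\delta$, and then the midpoint ball has radius $\gtrsim\delta/A$, i.e. $\Omega$ contains a ball of radius $\sim\delta/A$ inside $B(z_0, A\ell(\gamma))$ avoiding $S_1 \cup S_2$. Comparing this large protected ball against the fact that $S_1$ and $S_2$ both intersect $\br B(z_0, \max\{r_1,r_2\})$ and both have diameter $\ge\delta$, a final application of V\"ais\"al\"a's characterization (Theorem \ref{theorem:vaisala}, passing to diameter cigars) or a direct winding-number estimate closes the loop and produces $\delta\le C(A)\max\{r_1,r_2\}$. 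The single genuinely hard point is this forced trade-off — short inner distance versus long Euclidean excursion around a big continuum — and making it quantitative with a constant depending only on $A$ is where the real work lies; everything else (choice of $p_i, q_i$, the neighborhood inclusions, concatenating paths) is routine.
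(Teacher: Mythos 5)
There is a genuine gap, and you essentially identified it yourself without closing it. Your plan hinges on applying inner uniformity to a pair of points sitting near $z_0$ on (or next to) the two continua $S_1,S_2$, and for that you need an \emph{upper} bound on $\lambda_\Omega(p_1,p_2)$ (or $\rho_\Omega$) in terms of $\max\{r_1,r_2\}$. No such bound is available: the points $p_i$ lie in $S_i\subset\R^2\setminus\Omega$, not in $\Omega$, and even for nearby points of $\Omega$ on the two ``sides'' of the pinch, the inner distance can genuinely be comparable to $\diam S_i$ -- that is not absurd in itself, so applying the uniformity condition to such a pair produces no contradiction. Your fallback ideas (midpoint cigar ball of radius $\sim\ell(\gamma)/A$, a chain-of-balls/Jordan-curve argument, ``a final application of V\"ais\"al\"a's characterization closes the loop'') are left as expectations rather than arguments, and the quantitative trade-off you describe (``short inner distance versus long Euclidean excursion'') is exactly the step you never establish. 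So the proposal is an outline with its central step missing.

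The paper closes precisely this gap by choosing a different pair of points to which uniformity is applied. Using Zoretti's theorem, one gets a Jordan curve $J$ separating $S_1$ from $S_2$ and contained in $\Omega\cup(\R^2\setminus\br B(z_0,R))$; a suitable component $\gamma$ of $J\cap B(z_0,pr)$ (with $p=H/(A+1)$, $r=\max\{\dist(z_0,S_1),\dist(z_0,S_2)\}$) is an arc \emph{inside $\Omega$} with endpoints $w_1,w_2\in\partial B(z_0,pr)\cap\Omega$, chosen so that every curve homotopic to it rel endpoints in $\R^2\setminus(S_1\cup S_2)$ must meet the segment set $L=[z_0,z_1]\cup[z_0,z_2]$. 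Since $\gamma\subset\Omega$, one gets the crucial upper bound $\rho_\Omega(w_1,w_2)\le d(\gamma)\le 2pr$ for free -- this is the estimate your approach could not produce. An $(A,\rho_\Omega)$-uniform curve $\alpha$ joining $w_1,w_2$ is then shown, via a winding-number/diameter estimate, to be homotopic to $\gamma$ rel endpoints in $\R^2\setminus(S_1\cup S_2)$, hence it must hit $L$ at some $w_0$ with $\dist(w_0,\partial\Omega)\le r$; but the diameter-cigar condition forces $\dist(w_0,\partial\Omega)\ge (p-1)r/A$, giving $H\le(A+1)^2$ and the contradiction. If you want to salvage your write-up, the separating Jordan curve from Zoretti's theorem (and the reduction to diameter cigars via Theorem \ref{theorem:vaisala}) is the missing device: it converts the topological fact ``$S_1$ and $S_2$ are separated within the ball by $\Omega$'' into a concrete short path in $\Omega$ whose endpoints are far from $z_0$, which is what makes the uniformity hypothesis bite.
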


In the proof of the lemma we will use an important fact about planar topology, known as Zoretti's theorem. Below $N_{\varepsilon}(S)$ denotes the open $\varepsilon$-neighborhood of $S$.

\begin{theorem}[Zoretti's theorem; \cite{Whyburn:topology}*{Corollary VI.3.11}]\label{theorem:zoretti}
Let $E\subset \R^2$ be a compact set and $S$ be a component of $E$. For each $\varepsilon>0$ there exists a Jordan region $U$ such that $S\subset U$, $\partial U\cap E=\emptyset$, and $\partial U\subset N_{\varepsilon }(S)$.
\end{theorem}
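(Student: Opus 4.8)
The plan is to derive the statement from two ingredients: the fact that in a compact space connected components coincide with quasicomponents, and an elementary approximation of a compact planar set by a finite union of round disks.

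\emph{First} I would produce a relatively clopen approximant of $S$. Since $E$ is compact, the component $S$ equals its quasicomponent, i.e.\ the intersection of all sets that are clopen in $E$ and contain $S$. The set $E\setminus N_{\varepsilon/2}(S)$ is compact (relatively closed in $E$) and disjoint from this intersection, so by compactness finitely many clopen sets containing $S$ intersect in a single set $K$, clopen in $E$, with $S\subset K\subset N_{\varepsilon/2}(S)$. Now $K$ and $E\setminus K$ are disjoint compact sets; let $d=\dist(K,E\setminus K)>0$ (interpreting $d=+\infty$ when $E\setminus K=\emptyset$) and set $r=\min\{d/2,\varepsilon/4\}>0$. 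Since $r<d$ we have $N_r(K)\cap E=K$. Hence it suffices to build a Jordan region $U$ with $S\subset U$ and $\partial U\subset N_r(K)\setminus K$: then $\partial U\cap E=\emptyset$, and because $K\subset N_{\varepsilon/2}(S)$ and $r\le\varepsilon/4$ we also get $\partial U\subset N_{\varepsilon/2+r}(S)\subset N_\varepsilon(S)$.

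\emph{Next} I would approximate $K$ geometrically. Cover $K$ by finitely many open disks $D_1,\dots,D_m$ of radius $<r/3$ centered at points of $K$, and set $P=\overline{D_1}\cup\dots\cup\overline{D_m}$. Then $K\subset\bigcup_i D_i\subset\inter P$, while $P\subset N_{r/3}(K)$, so $\partial P=P\setminus\inter P\subset N_r(K)\setminus K$. After an arbitrarily small perturbation of the centers and radii the circles $\partial D_i$ may be assumed in general position — pairwise transverse, with no three sharing a point — and then a routine check at each point of $\partial P$ shows that $\partial P$ is a compact $1$-manifold, hence a finite disjoint union of Jordan curves $J_1,\dots,J_n\subset N_r(K)\setminus K$. (Instead of disks one could take squares of a fine grid and round off the finitely many ``checkerboard'' corners of the resulting $\partial P$.) Since $S\subset\inter P$ is connected and disjoint from $\partial P=\bigsqcup_i J_i$, and $\inter P$ is bounded, $S$ lies in a bounded component of $\R^2\setminus\partial P$; a standard application of the Jordan curve theorem then places $S$ inside the Jordan region $W_{i_0}$ enclosed by one of the curves $J_{i_0}$. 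Taking $U=W_{i_0}$ completes the argument, since $\partial U=J_{i_0}\subset N_r(K)\setminus K$.

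\emph{The main obstacle} is the planar-topology bookkeeping in the second step: arranging the bounding circles so that $\partial P$ is genuinely a disjoint union of Jordan curves (general position, or rounding of corners), and then invoking the Jordan curve theorem to isolate the single curve among them that surrounds $S$. None of this is deep, but it must be carried out carefully. By contrast the conceptual heart — the existence of a set clopen in $E$ that contains $S$ yet lies in any prescribed neighborhood of it — is short once one recalls that components equal quasicomponents in compact spaces; of course one may instead simply cite Whyburn, as the excerpt does.
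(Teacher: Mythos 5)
Your argument is essentially correct, but note that the paper does not prove this statement at all: it is quoted verbatim from Whyburn (Corollary VI.3.11) and used as a black box, so there is no internal proof to compare against. Your sketch is a legitimate self-contained route: the reduction via quasicomponents (components of a compact set coincide with quasicomponents, so one gets a set $K$ clopen in $E$ with $S\subset K\subset N_{\varepsilon/2}(S)$, and then $N_r(K)\cap E=K$ for small $r$) is exactly the right first move, and the approximation of $K$ by a finite union of small closed disks in general position does produce a boundary that is a finite disjoint union of Jordan curves inside $N_r(K)\setminus K$. Two points deserve slightly more care than your sketch allows. First, the perturbation must be small enough to preserve both $K\subset\operatorname{int}P$ and $P\subset N_r(K)$; this is immediate from a Lebesgue-number argument but should be said. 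Second, the final step --- that a connected set $S$ lying in a bounded component of the complement of finitely many pairwise disjoint Jordan curves must lie in the interior region of one of them --- is true and standard, but it is not quite a bare application of the Jordan curve theorem: one needs, e.g., that two disjoint Jordan curves are either nested or have disjoint closed interior regions, together with the fact that finitely many pairwise disjoint closed Jordan regions do not separate a point exterior to all of them from $\infty$ (a Janiszewski-type separation argument); once a point of $S$ lies in $\operatorname{int}J_{i_0}$, connectedness of $S$ and $S\cap J_{i_0}=\emptyset$ give $S\subset\operatorname{int}J_{i_0}$. With those details supplied, your proof establishes exactly the version of Zoretti's theorem stated (which asserts only $S\subset U$, $\partial U\cap E=\emptyset$, $\partial U\subset N_\varepsilon(S)$, and not that $U$ isolates $S$ from the other components of $E$), so it is an acceptable substitute for the citation.
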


\begin{proof}[Proof of Lemma \ref{lemma:relative_distance}]
By Theorem \ref{theorem:vaisala}, we may assume that $\Omega$ is an $(A,\rho_{\Omega})$-uniform domain for some $A\geq 1$. Let $S_1,S_2$ be non-degenerate components of $\br B(z_0,R)\cap (\R^2\setminus \Omega)$.  Consider points $z_1\in S_1$ and $z_2\in S_2$ such that $\dist(z_0,S_i)=|z_0-z_i|$ for $i=1,2$ and let $L$ be the union of the line segments $[z_0,z_1]$ and $[z_0,z_2]$. Note that for each $z\in  L$ we have $\dist(z,\partial \Omega)\leq \max\{|z_0-z_1|,|z_0-z_2|\}\eqqcolon r$. Let $H>(A+1)^2$ and suppose that 
$$\diam S_i>2Hr$$
for $i=1,2$. We define $p= \frac{H}{A+1}$ and observe that $p>1$ and $p+1<H$. Therefore, $2R\geq \diam S_1>2Hr >2(p+1)r$, so $\br B(z_0,pr) \subset B(z_0,R)$.

By Zoretti's theorem (Theorem \ref{theorem:zoretti}) there exists a Jordan curve $J\subset \Omega\cup (\R^2\setminus \br B(z_0,R))$ (i.e., the complement of $\br B(z_0,R)\cap (\R^2\setminus \Omega)$) separating $S_1$ from $S_2$. Since the set $L$ connects $S_1$ and $S_2$, we see that $J\cap L\neq \emptyset$. Note that any curve that is homotopic to $J$ in $\R^2\setminus (S_1\cup S_2)$ also separates $S_1$ and $S_2$ (because the winding number is invariant under homotopies \cite{Burckel:complex}*{Theorem IV.4.12}) and therefore intersects the set $L$. We have $\diam J\geq \diam S_i > 2(p+1)r$ for some $i\in \{1,2\}$, so $J\not\subset B(z_0,pr)$.

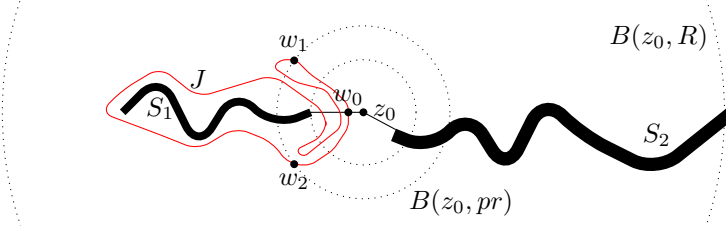
\begin{figure}
\centering
\begin{tikzpicture}
	\clip (-4.82,-1.5) rectangle (5.02,1.5);
	\draw[dotted] (0.2,0) circle (4.8);	
	\node[left] at (4.9,1) {$B(z_0,R)$};
	
	\draw (-1/2,0)--(0.2,0);
	\draw[dotted] (0.2,0) circle (0.7);
	\draw[dotted] (0.2,0) circle (1.15);
	\node at (1.5,-1.2) {$B(z_0,pr)$};
	
	\draw[rounded corners=10pt, line width=3pt] (-3,0) --(-2.5,.5) node[yshift=-0.45cm]{$S_1$} --(-2,-.5)--(-1.5,0.3)--(-1,-0.2)--(-0.5,0);
	
	\draw[red, rounded corners=2pt] (-3.3,0)[rounded corners=5pt] --(-3,0.3)--( -2.5,0.6)-- (-2,.2)node[black,above]{$J$}-- (-1,0.5)--(-0.3,0)--(-0.3,-0.3)[rounded corners=2pt]--(-0.7,-0.5)--(-0.6,-0.6)--(-0.1,-0.2)--(-0.1,0)--(-0.3,0.3)--(-0.7,0.4)--(-1,0.6)--(-0.9,0.7)--(-0.7,0.7)--(-0.5,0.5)--(-0.2,0.3)--(0,0.1)--(0,-0.2)--(-0.2,-0.5)--(-0.5,-0.7)[rounded corners=5pt]--(-0.8,-0.7)--(-1,-0.4)--(-1.5,-0.2)--(-2,-0.5)--(-2.5,-0.3)--cycle;
	
	\fill[black] (-0.72,0.69) circle (1.5pt) node[above] {$w_1$};
	\fill[black] (-0.72,-0.69) circle (1.5pt) node[below] {$w_2$};
	\fill[black] (0,0) circle (1.5pt) node[xshift=0cm, yshift=0.2cm] {$w_0$};	
	\fill[black](0.2,0) circle (1.5pt);
		
	\draw (0.2,0) node[right] {$z_0$}--(0.63,-0.23);		
	\draw[rounded corners=10pt, line width=5pt,xshift=0.1cm, yshift=-0.3cm] (5,0.3) node[shift={(-1,-0.3)}] {$S_2$}--(4,-0.5)--(3,0)--(2.5,.5)--(2,-.5)--(1.5,0.3)--(1,-0.2)--(0.5,0); 
\end{tikzpicture}
	\caption{The curve $J$ that separates $S_1$ from $S_2$.}\label{fig:components}
\end{figure}

Since $L\subset \br B(z_0,r)\subset B(z_0,pr)$ and $J\cap L\neq \emptyset$, there exists at least one component of the set $J\cap  B(z_0,pr)$ that intersects $L$. Consider the family of components $J_i$, $i\in I$, of $J\cap B(z_0,pr)$ that intersect $L$. The fact that $J\not\subset B(z_0,pr)$ and \cite{Whyburn:topology}*{I.10.2} imply that each set $J_i$ is an open arc with endpoints on $\partial B(z_0,pr)$. Moreover, since $L\subset \br B(z_0,r)$, we have $\diam J_i\geq (p-1)r$. The fact that $J$ is homeomorphic to the unit circle implies that $\{J_i\}_{i\in I}$ is a finite collection. We claim that there exists at least one component $J_i$ such that every curve that is homotopic to $J_i$ in $\R^2\setminus (S_1\cup S_2)$ rel.\ to the endpoints  must intersect $L$; see Figure \ref{fig:components}. Suppose instead that for each $i\in I$ the curve $J_i$ is homotopic  in $\R^2\setminus (S_1\cup S_2)$ rel.\ to the endpoints to a curve that is disjoint from $L$. By pasting these homotopies, one can then construct a homotopy in $\R^2\setminus (S_1\cup S_2)$ from $J$ to a curve that is disjoint from $L$. This is a contradiction. 

Let $\gamma$ be a parametrization of a component of $J\cap B(z_0,pr)$ with the above property. Let $w_1,w_2\in \partial B(z_0,pr)$ be the endpoints of $\gamma$. We have 
$$\rho_{\Omega}(w_1,w_2)\leq d(\gamma) \leq 2p r.$$ 
Note that $J\cap \partial B(z_0,pr)\subset \Omega$ because $\br B(z_0,pr)\subset B(z_0,R)$. Thus, $w_1,w_2\in \Omega$. Consider an $(A,\rho_{\Omega})$-uniform curve $\alpha$ in $\Omega$ connecting $w_1$ and $w_2$. We form a loop $\beta$ by concatenating $\alpha$ and $\gamma$. Suppose that $\beta$ is not null-homotopic in $\R^2\setminus (S_1\cup S_2)$. This implies that $\beta$ has non-zero winding number around all points of $S_i$ for some $i\in \{1,2\}$ (see \cite{Burckel:complex}*{Theorem IV.4.12}). As a consequence, the set $S_i$ lies in a bounded component of $\R^2\setminus |\beta|$ (see \cite{Burckel:complex}*{Corollary IV.4.3}). Thus,  
$$2Hr < \diam S_i \leq  d(\alpha)+d(\gamma)\leq  A\rho_{\Omega}(w_1,w_2)+d(\gamma)\leq 2(A+1)pr =2 Hr.$$
This is a contradiction. We conclude that $\beta$ is null-homotopic in $\R^2\setminus (S_1\cup S_2)$. Thus, $\gamma$ is homotopic to $\alpha$ in $\R^2\setminus (S_1\cup S_2)$ rel.\ to the endpoints. By the choice of $\gamma$, this implies that $\alpha$ intersects the set $L$ at a point $w_0$; see Figure \ref{fig:components}. Since $\alpha$ is an $(A,\rho_{\Omega})$-uniform curve, we have 
$$(p-1)r \leq \min \{ |w_0-w_1|,|w_0-w_2|\} \leq A \dist(w_0,\partial \Omega)\leq Ar.$$
Hence, $p-1\leq A$, i.e., $H\leq (A+1)^2$. This is a contradiction. Therefore, for every $H>(A+1)^2$ we have $\min\{\diam S_1,\diam S_2\}\leq 2Hr$. This proves the statement in the lemma with $C(A)=2(A+1)^2$.
\end{proof}

\begin{proof}[Proof of Theorem \ref{theorem:relative_distance}]
Let $S_1,S_2$ be non-degenerate components of $\R^2\setminus \Omega$. Consider points $z_1\in S_1$ and $z_2\in S_2$ such that $|z_1-z_2|=\dist(S_1,S_2)\eqqcolon 2r$ and let $z_0=(z_1+z_2)/2$. If $S_1$ and $S_2$ are unbounded, then for each $R\geq r$ and for $i=1,2$ there exists a component $S_i'(R)$ of $S_i\cap \br B(z_0,R)$ such that $S_i'(R)\cap \partial B(z_0,R)\neq \emptyset$ and $z_i\in S_i'(R)$; see \cite{Whyburn:topology}*{I.10.1}. By Lemma \ref{lemma:relative_distance} we have
$$R-r\leq  \min\{\diam S_1'(R), \diam S_2'(R)\}\leq C(A)r$$
for every $R\geq r$. This is a contradiction. Thus, one of $S_1,S_2$ is bounded. 

Suppose that $S_1$ is bounded and $S_2$ is unbounded. There exists $R_1\geq r$ such that $S_1\subset \br B(z_0,R_1)$ and $S_1\cap \partial B(z_0,R_1)\neq \emptyset$. Then $R_1-r\leq \diam S_1\leq 2R_1$. Let $S_2'$ be the component of $S_2\cap \br B(z_0,R_1)$ that contains $z_2$. Then $\diam S_2' \geq R_1-r$. 
By Lemma \ref{lemma:relative_distance} we have
$$R_1-r \leq   \min\{\diam S_1, \diam S_2'\} \leq C(A) r.$$
Therefore, 
$$\diam S_1 \leq 2R_1 \leq 2(C(A)+1)r= (C(A)+1)\dist(S_1,S_2).$$

Finally, suppose that both $S_1$ and $S_2$ are bounded. Let $R>0$ such that $S_1\cup S_2\subset B(z_0,R)$. By Lemma \ref{lemma:relative_distance} we obtain immediately the desired conclusion.
\end{proof}

\begin{corollary}\label{corollary:components}
Let $A\geq 1$ and $\Omega\subset \R^2$ be an inner $A$-uniform domain. Then for every $r,R>0$ the number of components $S$ of $\R^2\setminus \Omega$ with $S\cap B(0,R)\neq \emptyset$ and $\diam S>r$ is bounded above by $C(A)(1+R^2/r^2)$ and in particular is finite.
\end{corollary}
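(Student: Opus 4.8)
The plan is a straightforward packing argument based on the separation estimate of Theorem \ref{theorem:relative_distance}. Write $C_0=C(A)$ for the constant furnished by that theorem, so that any two distinct components $S,S'$ of $\R^2\setminus\Omega$ satisfy $\min\{\diam S,\diam S'\}\leq C_0\dist(S,S')$.

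First I would fix $r,R>0$ and let $\mathcal{S}$ denote the collection of components $S$ of $\R^2\setminus\Omega$ with $S\cap B(0,R)\neq\emptyset$ and $\diam S>r$; every such $S$ is non-degenerate. For distinct $S,S'\in\mathcal{S}$, Theorem \ref{theorem:relative_distance} gives
$$\dist(S,S')\geq C_0^{-1}\min\{\diam S,\diam S'\}> r/C_0.$$
Choosing for each $S\in\mathcal{S}$ a point $x_S\in S\cap B(0,R)$, we get $|x_S-x_{S'}|\geq \dist(S,S')>r/C_0$ whenever $S\neq S'$, so the balls $B(x_S,r/(2C_0))$, $S\in\mathcal{S}$, are pairwise disjoint. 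Since they are all contained in $B(0,R+r/(2C_0))$, comparing areas yields
$$\#\mathcal{S}\cdot\pi\left(\frac{r}{2C_0}\right)^2\leq \pi\left(R+\frac{r}{2C_0}\right)^2,$$
hence $\#\mathcal{S}\leq (1+2C_0R/r)^2$.

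Finally I would convert this into the asserted form: expanding and using the elementary inequality $R/r\leq\tfrac12(1+R^2/r^2)$ gives
$$\#\mathcal{S}\leq 1+4C_0\frac{R}{r}+4C_0^2\frac{R^2}{r^2}\leq \left(1+2C_0+4C_0^2\right)\left(1+\frac{R^2}{r^2}\right),$$
which is the claimed bound with $C(A)=1+2C(A)+4C(A)^2$; in particular $\#\mathcal{S}<\infty$. I do not expect any genuine obstacle here: the only substantive input is Theorem \ref{theorem:relative_distance}, and everything else is an elementary area-packing count, the sole bookkeeping point being the harmless algebraic step turning $(1+2C_0R/r)^2$ into a constant multiple of $1+R^2/r^2$.
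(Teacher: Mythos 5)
Your proof is correct and follows essentially the same route as the paper: apply Theorem \ref{theorem:relative_distance} to get a lower bound on the mutual distances of the components, then pick a point of each component in $B(0,R)$ and run the standard disjoint-balls area-packing count inside $B(0,R+r/(2C_0))$. The only difference is your explicit bookkeeping to massage the bound into the form $C(A)(1+R^2/r^2)$, which the paper leaves implicit.
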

\begin{proof}
Let $N\in \N$ and  $S_1,\dots, S_N$ be components of $\R^2\setminus \Omega$ with $S_i\cap B(0,R)\neq \emptyset$ and $\diam S_i>r$ for $i\in \{1,\dots,N\}$. By Theorem \ref{theorem:relative_distance} we have $\dist(S_i,S_j) \geq C(A)r$ for $i\neq j$. Let $x_i\in S_i\cap B(0,R)$ and $B_i=B(x_i,C(A)r/2)$, $i\in \{1,\dots,N\}$. The balls $B_i$, $i\in \{1,\dots,N\}$, are pairwise disjoint and thus the area of their union is equal to $N\pi C(A)^2 r^2/4$. The union is contained in $B(0,R+C(A)r/2)$, whose area is bounded above by $C_1(A)(R^2+r^2)$. Therefore, $N\leq C_2(A)(1+R^2/r^2)$.
\end{proof}

Now, we turn to the proof of Theorem \ref{theorem:bounded_turning}.

\begin{proof}[Proof of Theorem \ref{theorem:bounded_turning}]
Let $H>1$ to be determined. Let $z_1,z_2\in S$ and set $z_0=(z_1+z_2)/2$ and $r=|z_1-z_2|/2$. 
We will show that $z_1,z_2$ lie in the same component of $\br B(z_0,Hr)\cap S$, upon choosing $H$ to be sufficiently large, depending on $A$. Suppose that there exist distinct components $S_1,S_2$ of $\br B(z_0,Hr)\cap  S$ such that $z_i\in S_i$, $i=1,2$. In particular, $S_i$ intersects $\partial B(z_0,Hr)$ (see \cite{Whyburn:topology}*{I.10.1}), so $\diam S_i\geq Hr-r$ for $i=1,2$. By Lemma \ref{lemma:relative_distance}, we have
$$ Hr-r \leq \min\{ \diam S_1,\diam S_2\}\leq C(A)r.$$
Thus, $H\leq C(A)+1$. If we consider $H> C(A)+1$ in the beginning of the proof, then we obtain a contradiction.
\end{proof}

We also record a lemma for future reference. 

\begin{lemma}\label{lemma:bt_definitions}
Let $(X,d_X)$ be a locally compact metric space.
\begin{enumerate}[label=\normalfont (\arabic*)]
	\item If $(X,d_X)$ is locally connected and $E\subset X$ is a connected set then, for every $\varepsilon>0$, any two points $x,y\in E$ can be connected by a path $\gamma$ with $d(\gamma)\leq \diam(E)+\varepsilon$.
	\item  If $(X,d_X)$ has $L$-bounded turning for some $L\geq 1$, then for every $L'>L$, any two points $x,y\in X$ can be connected by a path $\gamma$ with $d(\gamma)\leq L' d_X(x,y)$.
\end{enumerate}

\end{lemma}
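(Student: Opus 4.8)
The two statements are standard facts about locally compact, locally connected metric spaces, and I would prove (1) first, then derive (2) as an easy consequence.

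For part (1), the plan is to use the fact that in a connected, locally connected, locally compact metric space, connected open sets are path-connected. Given $\varepsilon>0$, I would work inside a suitable neighborhood of $E$: let $U=N_{\varepsilon/2}(E)$ be the open $\varepsilon/2$-neighborhood of $E$, which is an open set containing the connected set $E$, and let $V$ be the connected component of $U$ containing $E$. Then $V$ is open (by local connectedness) and connected, hence path-connected (here I use local compactness together with local connectedness, which makes $V$ a locally path-connected, connected space, and such spaces are path-connected). So $x$ and $y$ can be joined by a path $\gamma$ inside $V$. The issue is that an arbitrary path in $V$ may have huge diameter even though $V$ is thin; to fix this, I would instead argue more carefully: first connect $x$ to $y$ by a path $\gamma_0$ in $V$, then observe that $|\gamma_0|$ is a compact connected subset of $V\subset N_{\varepsilon/2}(E)$, and replace it by a path of controlled diameter. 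Concretely, a cleaner route: cover $E$ by finitely many connected open sets of diameter $<\varepsilon/(2k)$ (using local connectedness and compactness of a neighborhood), chain them along $E$ from $x$ to $y$, and concatenate short paths through the overlaps; this yields a path $\gamma$ with $|\gamma|\subset N_{\varepsilon}(E)$, so $d(\gamma)\le \diam(N_\varepsilon(E))\le \diam(E)+2\varepsilon$. After relabeling $\varepsilon$, this gives the claim. The main obstacle is the diameter bookkeeping: one must choose the chain of small connected sets so that every one of them meets $E$, so that the resulting path stays within an $\varepsilon$-neighborhood of $E$ and therefore has diameter at most $\diam(E)+\varepsilon$; the path-connectedness itself is routine once local path-connectedness is in hand.

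For part (2), given $x,y\in X$ and $L'>L$, apply the $L$-bounded turning hypothesis to obtain a connected set $E\subset X$ with $x,y\in E$ and $\diam E\le L\,d_X(x,y)$. Now apply part (1) with $\varepsilon=(L'-L)d_X(x,y)>0$ (assuming $x\ne y$; if $x=y$ the statement is trivial) to get a path $\gamma$ joining $x$ and $y$ with $d(\gamma)\le \diam E+\varepsilon\le L\,d_X(x,y)+(L'-L)d_X(x,y)=L'd_X(x,y)$, as desired. This step is immediate given (1).

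The only subtlety worth flagging is that local compactness is genuinely used in (1): it guarantees that small enough neighborhoods have compact closure, which is what lets us pass from "locally connected" to "locally path-connected" (a connected, locally path-connected space is path-connected) and also ensures the chaining argument terminates in finitely many steps. With these ingredients the proof is a routine exercise in point-set topology.
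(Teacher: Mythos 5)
Your first route in part (1) is exactly the paper's proof, and it is already complete: let $V$ be the connected component of $N_{\varepsilon/2}(E)$ containing $E$; then $V$ is open (by local connectedness), hence locally compact, locally connected and connected, so by the arcwise connectivity theorem (Whyburn) any $x,y\in E\subset V$ are joined by a path $\gamma$ in $V$. The ``issue'' you then raise --- that such a path may have huge diameter --- is spurious and seems to confuse diameter with length: the lemma only asks for a bound on $d(\gamma)=\diam|\gamma|$, and since $|\gamma|\subset V\subset N_{\varepsilon/2}(E)$ you get $d(\gamma)\leq \diam V\leq \diam E+\varepsilon$ for free, which is precisely the one-line bookkeeping in the paper. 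The ``cleaner route'' you substitute for it is both unnecessary and the weakest part of your write-up: covering $E$ by \emph{finitely} many small connected open sets uses compactness of $E$ (or of a neighborhood of $E$), which is not among the hypotheses --- $E$ is merely a connected subset of a locally compact space --- so that chaining argument has a genuine gap as stated, while the argument it was meant to repair does not.

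Two minor points. In part (2) you apply part (1) to $X$ itself, but part (1) assumes $X$ is locally connected; you should add the (easy, but needed) observation that a space of bounded turning is locally connected, which is how the paper phrases the reduction. Also, your parenthetical that local compactness plus local connectedness yields local path-connectedness is exactly the nontrivial theorem being invoked, so it deserves a citation (Whyburn, or any arcwise connectivity theorem) rather than being treated as folklore; with that, the derivation of (2) from (1) with $\varepsilon=(L'-L)d_X(x,y)$ is correct and matches the paper.
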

\begin{proof}
Let $\varepsilon>0$ and $Y$ be the connected component of $N_{\varepsilon/2}(E)$ that contains $E$. Then $\diam(Y)\leq  \diam(E)+\varepsilon$. Also, $Y$ is a locally compact, locally connected, and connected space. By \cite{Whyburn:topology}*{II.5.2, p.~38}, every pair of points $x,y\in Y$ can be connected by a path $\gamma$ in $Y$, so $d(\gamma)\leq \diam E+\varepsilon$. This completes the proof of the first part. The second part follows from the first part upon observing that a space of bounded turning is locally connected. 
\end{proof}

\subsection{Approximation of inner uniform domains}

The next theorem is the main result of this section. Its proof relies an all results from Section \ref{section:inner_geometry}.

\begin{theorem}\label{theorem:approximation}
Let $A\geq 1$ and $\Omega\subset \R^2$ be an inner $A$-uniform domain. For $\delta>0$ let $D\subset \R^2$ be the domain such that $\R^2\setminus D$ is the union of the components $S$ of $\R^2\setminus \Omega$ with $\diam S >\delta$. Then $D$ is an inner $C(A)$-uniform domain.
\end{theorem}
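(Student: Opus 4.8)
The plan is to reduce to a pair of points in $\Omega$, and then to produce the required inner uniform curve of $D$ by surgery on an inner uniform curve of $\Omega$, the surgery being controlled by the structural results of Section~\ref{section:inner_geometry}.

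First, some preliminaries. Since $\R^2\setminus D\subseteq\R^2\setminus\Omega$ we have $\Omega\subseteq D$ and $\partial D\subseteq\R^2\setminus\Omega$, so $\dist(x,\partial D)\ge\dist(x,\partial\Omega)$ for all $x\in\Omega$. If $S$ is a component of $\R^2\setminus\Omega$ with $\diam S\le\delta$ and $B$ is a component with $\diam B>\delta$, then $\min\{\diam S,\diam B\}=\diam S$, so Theorem~\ref{theorem:relative_distance} yields $\dist(S,B)\ge\diam S/C(A)$; setting $r_S=\dist(S,\partial D)$, it follows that $r_S\ge\diam S/C(A)$, that every point of $S$ lies at distance at least $r_S$ from $\partial D$, and that $B(S,r_S/2)\subseteq D$. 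Finally, if $a\in D\setminus\Omega$ then $a$ lies in a (necessarily small) component $S_0$ and $B(a,\dist(a,\partial D))\subseteq D$; one joins $a$ to a point of $\Omega$ inside this ball by a short curve that is easily checked to be inner $C(A)$-uniform in $D$ (using Theorem~\ref{theorem:bounded_turning} and Lemma~\ref{lemma:bt_definitions} when $S_0$ is nondegenerate), and since appending such curves to an inner $C(A)$-uniform curve preserves inner uniformity up to the constant, we may assume $a,b\in\Omega$.

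Now fix $a,b\in\Omega$ and let $\gamma_0$ be an inner $A$-uniform curve of $\Omega$ joining them. The key point is that $\gamma_0$ \emph{already} satisfies the cigar condition relative to $D$, because $\cig_\ell(\gamma_0,A)\subseteq\Omega\subseteq D$; and this persists under shortening of $\gamma_0$, since shortening only decreases $\min\{\ell(\gamma_0|_{[0,t]}),\ell(\gamma_0|_{[t,1]})\}$ whereas $\dist(\cdot,\partial D)$ is untouched. Thus $\gamma_0$ can fail to be inner $C(A)$-uniform in $D$ only through its length $\ell(\gamma_0)\le A\lambda_\Omega(a,b)$, which may be far larger than $\lambda_D(a,b)$ — and this excess is due solely to $\gamma_0$ being forced to detour around small components $S$ that no longer obstruct paths in $D$. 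The surgery is then: for each small $S$ around which $\gamma_0$ makes a long detour, excise the sub-arc of $\gamma_0$ between its first and last visits to $\br B(S,c\,\diam S)$ (for a suitable $c=c(A)$) and splice in a short curve $\eta_S$ with $|\eta_S|\subseteq B(S,r_S/2)\subseteq D$ and $\ell(\eta_S)\le C(A)\diam S$, built from Zoretti's theorem (Theorem~\ref{theorem:zoretti}) — which gives a Jordan curve in $\Omega$ encircling $S$ arbitrarily closely — together with Theorem~\ref{theorem:bounded_turning}; when $a$ and $b$ both lie near a single large such $S$, several detours are removed at once by replacing one long sub-arc of $\gamma_0$ by a single short curve through the relevant small components. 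The resulting curve $\gamma$ still satisfies the cigar condition in $D$: on retained parts this is inherited, while along $\eta_S$ the cigar condition of $\gamma_0$ \emph{in $\Omega$} forces $\min\{\ell(\gamma_0|_{[0,t]}),\ell(\gamma_0|_{[t,1]})\}\le C(A)\diam S$ at the endpoints of the excised sub-arc (there $\dist(\gamma_0(t),\partial\Omega)\le C(A)\diam S$), whence $\min\{\ell(\gamma|_{[0,t]}),\ell(\gamma|_{[t,1]})\}\le C(A)\diam S\le C(A)r_S\le 2C(A)\dist(\gamma(t),\partial D)$ there.

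What remains — and is the technical heart of the matter — is the quasiconvexity bound $\ell(\gamma)\le C(A)\lambda_D(a,b)$. One compares $\gamma$ with an almost-geodesic $\tau$ of $(D,\lambda_D)$ and argues that every excess of $\gamma_0$ over $\tau$ is captured by an excised detour. Corollary~\ref{corollary:components} enters crucially: the curve $\tau$, hence all the relevant components, lies in a ball of radius at most $C(A)\lambda_D(a,b)$, which meets only $C(A)$ components of diameter comparable to or larger than $\lambda_D(a,b)$, so the splices $\eta_S$ for these are finite in number and their total length is controlled; the detours of $\gamma_0$ around the \emph{smaller} components must then be shown to be either genuinely short or subsumed in, and charged against, an unavoidable detour of $\tau$ around a nearby \emph{big} component — here Theorem~\ref{theorem:relative_distance} locates big components relative to small ones and Corollary~\ref{corollary:components} bounds how far detours around big components can cascade. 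Throughout, Theorem~\ref{theorem:vaisala} permits passing between the length and diameter formulations, which simplifies the construction and estimation of the $\eta_S$. The hardest point, I expect, is exactly this cascade control — ruling out that filling in small holes creates arbitrarily long forced detours around the surviving large holes — and it is the reason the argument needs every result of Section~\ref{section:inner_geometry}.
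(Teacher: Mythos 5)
Your observation that the cigar condition is inherited for free (since $\Omega\subset D$) and that only quasiconvexity is at stake is correct, and your toolkit (separation, bounded turning, Lemma \ref{lemma:path_omega}, cigar concatenation, the $\rho$-formulation via Theorem \ref{theorem:vaisala}) matches the paper's. But there is a genuine gap, in two places. First, your splice estimate is mis-calibrated: bounding $\eta_S$ (or the appended curves in your reduction to $a,b\in\Omega$) merely by $C(A)\diam S$, resp.\ by $\dist(a,\partial D)$, is not enough, because $\rho_D(a,b)$ can be far smaller than $\diam S$. Take $S$ a thin slit of diameter $\le\delta$ that is filled in $D$, and $a,b$ points of $\Omega$ on opposite sides of its middle with $|a-b|$ tiny: then $\rho_D(a,b)\approx|a-b|$, your first and last visit times to $\br B(S,c\diam S)$ are essentially $0$ and $1$, and the spliced curve you allow has diameter comparable to $\diam S\gg\rho_D(a,b)$, so it is not $(C(A),\rho_D)$-uniform, and no charging against a near-geodesic $\tau$ of $D$ can repair this since $\tau$ makes no detour at all. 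What is needed is the sharper bound $d(\eta_S)\le C(A)\bigl(\dist(x_1,S)+\dist(x_2,S)+|x_1-x_2|\bigr)$, obtained by routing \emph{through} $S$ (straight segments to nearest points of $S$, then a bounded-turning path inside $S$), which is exactly property (2) of the Claim in the paper's proof; a Zoretti curve encircling $S$ gives no such endpoint-sensitive bound (nor any length bound at all). Second, the part you yourself flag as the ``technical heart'' --- the cascade/charging argument proving $\ell(\gamma)\le C(A)\lambda_D(a,b)$ --- is only conjectured, not carried out, and the surgery itself (which detours count as ``long'', overlapping or nested excised arcs for different $S$, summability over infinitely many small components) is not made well-defined.

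It is worth noting that the paper's route makes this entire difficulty evaporate, which is the idea you are missing. Fix $H=H(A)$ large. If $\rho_\Omega(a,b)\le H\rho_D(a,b)$, the inner uniform curve of $\Omega$ already works, with no surgery. If $\rho_\Omega(a,b)>H\rho_D(a,b)$, take a curve $\gamma_D$ in $D$ with $d(\gamma_D)\le 2\rho_D(a,b)$: by Lemma \ref{lemma:path_omega} some component $S$ of $\R^2\setminus\Omega$ meeting $\gamma_D$ has $\diam S\gtrsim_H d(\gamma_D)$, and by Theorem \ref{theorem:relative_distance} this $S$ is \emph{unique}. All other components met by $\gamma_D$ are small relative to $d(\gamma_D)$, so Lemma \ref{lemma:path_omega} gives $\rho_\Omega(a,a_1)\lesssim_H d(\gamma_D)$ for a point $a_1$ of $\gamma_D$ near $S$ (similarly $b_1$); one then concatenates inner uniform $\Omega$-curves from $a$ to $a_1$ and from $b_1$ to $b$ with the endpoint-calibrated path near $S$ from the Claim, and Lemma \ref{lemma:concatenation_cigar} finishes. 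Thus there is only ever one component per pair $(a,b)$ at which any rerouting is needed, and no accounting over many detours is required. Your proposal could likely be completed, but doing so would in effect reconstruct this dichotomy and the Claim's sharper splice estimate.
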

Most importantly, $D$ is inner uniform with constant not depending on $\delta$. Note that all components of $\R^2\setminus D$ are necessarily larger than all other components of $\R^2\setminus \Omega$. This condition is crucial and the conclusion of the theorem is not true in general for domains $D\supset \Omega$ whose complement consists of an arbitrary collection of components of $\R^2\setminus \Omega$. For example, let $\Omega$ be a domain whose complement consists of $E=[0,1]\times\{0\}$ and a countable collection $F$ of points in $\R^2\setminus E$ whose closure contains $E$. One can choose the points of $F$ sparsely so that $\Omega$ is an inner uniform domain. If $D$ is a domain whose complement consists of all points in $F$ outside a small neighborhood $U$ of $E$, then $D$ is not inner uniform with uniform constants independent of the choice of $U$.

We establish some preliminary statements.

\begin{lemma}[Avoidance of boundary components]\label{lemma:path_omega}
Let $\Omega\subset \R^2$ be a domain and $r>0$. Suppose that $\gamma$ is a curve in $\R^2$ connecting two points $a,b\in \Omega$ such that each component $S$ of $\R^2\setminus \Omega$ that intersects $\gamma$ satisfies $\diam S < r$. Then there exists a curve $\widetilde \gamma$ in $\Omega$ connecting $a$ and $b$ such that $|\widetilde \gamma|\subset N_r(|\gamma|)$ and $d(\widetilde \gamma)< 2r+d(\gamma)$.
\end{lemma}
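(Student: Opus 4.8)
The plan is to reduce to producing a curve $\widetilde\gamma$ in $\Omega$ from $a$ to $b$ with $|\widetilde\gamma|\subseteq N_r(|\gamma|)$, and then to build $\widetilde\gamma$ by surgically rerouting $\gamma$ around each (small) complementary component it meets, using Zoretti's theorem to manufacture the detours. The diameter bound is then free: if $|\widetilde\gamma|\subseteq N_r(|\gamma|)$, then any $p,q\in|\widetilde\gamma|$ lie within $r$ of $|\gamma|$, so $|p-q|<r+d(\gamma)+r$, and since $|\widetilde\gamma|$ is compact this gives $d(\widetilde\gamma)<2r+d(\gamma)$. If $K:=|\gamma|\cap(\R^2\setminus\Omega)=\emptyset$ we may take $\widetilde\gamma=\gamma$, so assume the compact set $K$ is nonempty.

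For $x\in K$ let $S_x$ be the component of $\R^2\setminus\Omega$ through $x$; by hypothesis $\diam S_x<r$, so $S_x\subseteq B(x,r)\subseteq N_r(|\gamma|)$. Fixing a closed disk $\overline B$ with $\overline{N_r(|\gamma|)}\subseteq\inter\overline B$ and setting $E=\overline B\cap(\R^2\setminus\Omega)$, the component of the compact set $E$ through $x$ is again $S_x$ (because $S_x\subseteq\overline B$ and $S_x$ is a maximal connected subset of $\R^2\setminus\Omega$). Zoretti's theorem (Theorem \ref{theorem:zoretti}), applied to $E$ and $S_x$ with a sufficiently small $\varepsilon$, yields an open Jordan region $U_x$ with $S_x\subseteq U_x$, $\partial U_x\cap E=\emptyset$, and $\partial U_x\subseteq N_\varepsilon(S_x)$; taking $\varepsilon<\tfrac12(r-\diam S_x)$ makes $\diam\overline{U_x}=\diam\partial U_x<r$, whence $\overline{U_x}\subseteq B(x,r)\subseteq N_r(|\gamma|)\subseteq\inter\overline B$, and then $\partial U_x\cap E=\emptyset$ forces $\partial U_x\subseteq\Omega$. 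Since the $U_x$ cover the compact set $K$, I would fix a finite subcover $U_1,\dots,U_N$.

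Next I would modify $\gamma$ in $N$ steps, $\gamma=\gamma_0,\gamma_1,\dots,\gamma_N$, maintaining the two invariants $|\gamma_i|\subseteq N_r(|\gamma|)$ and $|\gamma_i|\cap(\R^2\setminus\Omega)\subseteq U_{i+1}\cup\cdots\cup U_N$. To pass from $\gamma_{i-1}$ to $\gamma_i$, consider the open set $\gamma_{i-1}^{-1}(U_i)\subseteq[0,1]$ and replace $\gamma_{i-1}$ on each of its maximal intervals: on an interval $(s,t)$ with $0<s$ and $t<1$ the endpoints $\gamma_{i-1}(s),\gamma_{i-1}(t)$ lie on the Jordan curve $\partial U_i$, and I splice in an appropriate subarc of $\partial U_i$ between them; on the (at most two) intervals meeting $\{0,1\}$ the corresponding endpoint $a$ or $b$ lies in $U_i$, and I reach $\partial U_i$ from it by truncating, at its first exit from $U_i$, any path in the connected open set $\Omega$ from that endpoint to a point of $\partial U_i\subseteq\Omega$, and then continue along $\partial U_i$. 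Every spliced-in piece lies in $\overline{U_i}\subseteq N_r(|\gamma|)$ and in $\Omega$ (being in $\partial U_i$, or in $\Omega$ by construction), while the untouched part of $\gamma_{i-1}$ avoids $U_i$; this propagates both invariants to $\gamma_i$. For $i=N$ the second invariant is vacuous, so $\widetilde\gamma:=\gamma_N$ lies in $\Omega\cap N_r(|\gamma|)$ and joins $a$ to $b$.

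The hard part will be checking that these replacements assemble into a \emph{continuous} curve, since at a given stage $\gamma_{i-1}$ may enter $U_i$ infinitely often. This is where one must choose the spliced-in subarcs of $\partial U_i$ carefully: relative to a fixed homeomorphic parametrization of $\partial U_i$, take the subarc subtending the shorter parameter interval, so that if maximal intervals $(s_k,t_k)$ accumulate at some $\tau^*$ then $\gamma_{i-1}(s_k),\gamma_{i-1}(t_k)\to\gamma_{i-1}(\tau^*)\in\partial U_i$ and the chosen subarcs shrink to the point $\gamma_{i-1}(\tau^*)$ by uniform continuity, yielding continuity of $\gamma_i$. The two structural inputs driving the argument are Zoretti's theorem, which supplies Jordan curves inside $\Omega$ surrounding the small bad components, and the connectedness of $\Omega$, which is precisely what permits reaching $a$ and $b$ when they happen to sit inside one of the enclosing regions.
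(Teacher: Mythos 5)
Your proposal is correct and follows essentially the same route as the paper: apply Zoretti's theorem to enclose each complementary component meeting $\gamma$ in a Jordan region of diameter less than $r$ whose boundary lies in $\Omega$, extract a finite subcover of the compact set $|\gamma|\cap(\R^2\setminus\Omega)$, and inductively reroute $\gamma$ along the boundary curves, with the diameter bound following from $|\widetilde\gamma|\subset N_r(|\gamma|)$. The paper leaves the inductive splicing as an exercise, whereas you supply the details (the shrinking-arc argument for continuity when maximal intervals accumulate, and the treatment of $a$ or $b$ lying inside an enclosing region, which the paper instead avoids by requiring the Jordan curves to separate $S$ from $\{a,b\}$); these are refinements of, not departures from, the paper's argument.
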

\begin{proof}
By assumption, each component $S$ of $\R^2\setminus \Omega$ that intersects $\gamma$ is bounded. By Zoretti's theorem (Theorem \ref{theorem:zoretti}),  there exists a Jordan region $U_S$ containing $S$ whose boundary $J_S$ is contained in $\Omega$, $J_S$ separates $S$ from $\{a,b\}$, and $\diam J_S<r$. Since $|\gamma|\cap (\R^2\setminus \Omega)$ is a compact set, it can be covered by finitely many Jordan regions $U_{S_1},\dots,U_{S_N}$ as above.  One can show inductively that there exists a path $\widetilde \gamma$ that connects $a,b$, has trace in $|\gamma|\cup \bigcup_{i=1}^N J_{S_i} \subset N_r(|\gamma|)$, and does not intersect $\bigcup_{i=1}^N (U_{S_i}\setminus \Omega)$. Therefore, $\widetilde \gamma$ is a path in $\Omega$ and $d(\widetilde \gamma)< 2r+d(\gamma)$. 
\end{proof}

\begin{lemma}[Concatenation of cigars]\label{lemma:concatenation_cigar}
Let $D\subset \R^2$ be a domain and $\gamma_1,\gamma_2$ be curves in $D$ with a common endpoint $x_0$ such that 
$$\dist(x_0,\partial D)\geq A^{-1}\min\{d(\gamma_1),d(\gamma_2)\}\,\,\, \textrm{and} \,\,\, \cig_d(\gamma_i,A)\subset D \,\,\,\textrm{for $i=1,2$}$$
for some $A\geq 1$. If $\gamma$ is the concatenation of $\gamma_1$ and $\gamma_2$ at $x_0$, then $\cig_d(\gamma, C(A)) \subset D$. The same statement is true if the diameter $d$ in the assumptions and conclusion is replaced with the length $\ell$.
\end{lemma}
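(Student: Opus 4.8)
The plan is to parametrize both $\gamma_1$ and $\gamma_2$ on $[0,1]$ with the common endpoint $x_0$ occurring at the shared parameter value, say $\gamma_1(1)=\gamma_2(0)=x_0$, and to let $\gamma\colon[0,1]\to D$ be the concatenation. Fix $t\in[0,1]$ and set $y=\gamma(t)$; I must exhibit a ball $B(y,\rho)\subset D$ with $\rho$ comparable (with constant depending only on $A$) to $\min\{d(\gamma|_{[0,t]}),d(\gamma|_{[t,1]})\}$. By symmetry, assume $y$ lies on the first half, i.e.\ $y=\gamma_1(s)$ for some $s\in[0,1]$; write $m_1=\min\{d(\gamma_1|_{[0,s]}),d(\gamma_1|_{[s,1]})\}$, so that $\cig_d(\gamma_1,A)\subset D$ gives $B(y,A^{-1}m_1)\subset D$. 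The quantity I need to control from below by a multiple of $A^{-1}m_1$ is $\min\{d(\gamma|_{[0,t]}),d(\gamma|_{[t,1]})\}$. The first piece $\gamma|_{[0,t]}$ is just $\gamma_1|_{[0,s]}$, so its diameter is $d(\gamma_1|_{[0,s]})\ge m_1$, which is fine. The real work is the second piece $\gamma|_{[t,1]}$, which is $\gamma_1|_{[s,1]}$ followed by all of $\gamma_2$; its diameter could in principle be much larger than $m_1$, but that is the favorable direction — I only need a \emph{lower} bound, and $d(\gamma|_{[t,1]})\ge d(\gamma_1|_{[s,1]})\ge m_1$. Hence $\min\{d(\gamma|_{[0,t]}),d(\gamma|_{[t,1]})\}\ge m_1$, and therefore $B(y,A^{-1}\min\{d(\gamma|_{[0,t]}),d(\gamma|_{[t,1]})\})$ need \emph{not} be contained in $D$ — this is exactly where one must be careful, because $A^{-1}\min\{d(\gamma|_{[0,t]}),d(\gamma|_{[t,1]})\}$ can exceed $A^{-1}m_1$.

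To handle this, split into two cases according to the size of $d(\gamma|_{[0,t]})$ relative to $d(\gamma_2)$. \emph{Case 1: $d(\gamma_1|_{[0,s]})\le d(\gamma_2)$ is false}, i.e.\ $d(\gamma|_{[0,t]})=d(\gamma_1|_{[0,s]})> d(\gamma_2)$. Then $\min\{d(\gamma|_{[0,t]}),d(\gamma|_{[t,1]})\}\le d(\gamma|_{[0,t]})$ but I claim it is also $\le C(A)\,\mathrm{dist}(y,\partial D)$ directly from $\cig_d(\gamma_1,A)\subset D$ once I relate $m_1$ to $\min\{d(\gamma|_{[0,t]}),d(\gamma|_{[t,1]})\}$; the point is that if $d(\gamma_1|_{[s,1]})\ge d(\gamma_1|_{[0,s]})$ then $m_1=d(\gamma_1|_{[0,s]})=d(\gamma|_{[0,t]})\ge\min\{\cdots\}$ and we are done with constant $A$, while if $d(\gamma_1|_{[s,1]})<d(\gamma_1|_{[0,s]})$ then $m_1=d(\gamma_1|_{[s,1]})$, and I use the hypothesis $\mathrm{dist}(x_0,\partial D)\ge A^{-1}\min\{d(\gamma_1),d(\gamma_2)\}$ together with the triangle-type estimate $\mathrm{dist}(y,\partial D)\ge \mathrm{dist}(x_0,\partial D)-|y-x_0|\ge \mathrm{dist}(x_0,\partial D)-d(\gamma_1|_{[s,1]})$ — but this can be negative, so instead I combine the two available lower bounds $\mathrm{dist}(y,\partial D)\ge A^{-1}m_1=A^{-1}d(\gamma_1|_{[s,1]})$ and the bound near $x_0$ to get $\mathrm{dist}(y,\partial D)\ge c(A)\,d(\gamma_1|_{[0,s]})$ whenever $d(\gamma_1|_{[s,1]})$ is not too small, and when it \emph{is} small we have $|y-x_0|$ small so $\mathrm{dist}(y,\partial D)\ge \mathrm{dist}(x_0,\partial D)/2\ge (2A)^{-1}\min\{d(\gamma_1),d(\gamma_2)\}$, which dominates $\min\{d(\gamma|_{[0,t]}),d(\gamma|_{[t,1]})\}$ up to a constant since $d(\gamma|_{[0,t]})=d(\gamma_1|_{[0,s]})\le d(\gamma_1)$ is not large relative to $d(\gamma_1)$ only if... — here one sees the bookkeeping gets intricate, so in practice I would instead organize the whole argument around the single clean inequality below.

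\emph{The clean organizing inequality.} For $y=\gamma_1(s)$, one has both
\[
\mathrm{dist}(y,\partial D)\ge A^{-1}\min\{d(\gamma_1|_{[0,s]}),d(\gamma_1|_{[s,1]})\}
\]
and, since $|y-x_0|\le d(\gamma_1|_{[s,1]})$ and $\mathrm{dist}(x_0,\partial D)\ge A^{-1}\min\{d(\gamma_1),d(\gamma_2)\}$,
\[
\mathrm{dist}(y,\partial D)\ge A^{-1}\min\{d(\gamma_1),d(\gamma_2)\}-d(\gamma_1|_{[s,1]}).
\]
Averaging (taking $\tfrac12$ of the first plus a suitable positive multiple so the $-d(\gamma_1|_{[s,1]})$ term is absorbed) yields $\mathrm{dist}(y,\partial D)\ge c(A)\min\{d(\gamma_1|_{[0,s]}),d(\gamma_2)\}$ for an explicit $c(A)>0$: indeed if $d(\gamma_1|_{[s,1]})\ge d(\gamma_1|_{[0,s]})$ use the first bound ($=A^{-1}d(\gamma_1|_{[0,s]})$), and if $d(\gamma_1|_{[s,1]})< d(\gamma_1|_{[0,s]})$ then $\frac{1}{2}\cdot(\text{first}) + \frac{1}{2A}\cdot(\text{second}) \ge \frac{1}{2A}d(\gamma_1|_{[s,1]}) + \frac{1}{2A}\min\{d(\gamma_1),d(\gamma_2)\} - \frac{1}{2A}d(\gamma_1|_{[s,1]}) = \frac{1}{2A}\min\{d(\gamma_1),d(\gamma_2)\}\ge\frac1{2A}\min\{d(\gamma_1|_{[0,s]}),d(\gamma_2)\}$. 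Now observe $d(\gamma|_{[0,t]})=d(\gamma_1|_{[0,s]})$ and $d(\gamma|_{[t,1]})=d(\gamma_1|_{[s,1]}\cup\gamma_2)\le d(\gamma_1|_{[s,1]})+d(\gamma_2)+ \text{(overlap)}$; more simply $d(\gamma|_{[t,1]})\le d(\gamma_1)+d(\gamma_2)$ since both contain $x_0$, and $d(\gamma_2)\le d(\gamma|_{[t,1]})$. If $\min\{d(\gamma|_{[0,t]}),d(\gamma|_{[t,1]})\}=d(\gamma|_{[0,t]})=d(\gamma_1|_{[0,s]})$, compare with $\min\{d(\gamma_1|_{[0,s]}),d(\gamma_2)\}$: when $d(\gamma_2)\ge d(\gamma_1|_{[0,s]})$ these are equal and done; when $d(\gamma_2)< d(\gamma_1|_{[0,s]})\le d(\gamma|_{[t,1]})$ is impossible since then $\min = d(\gamma_2)$... one re-balances once more. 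Rather than trace every branch here, the point of the proposal is: a finite case analysis on which of $d(\gamma_1|_{[0,s]}),d(\gamma_1|_{[s,1]}),d(\gamma_2)$ is smallest, combined with the two displayed lower bounds for $\mathrm{dist}(y,\partial D)$ and the elementary diameter inequalities $d(\gamma_2)\le d(\gamma|_{[t,1]})\le d(\gamma_1|_{[s,1]})+d(\gamma_2)$, yields $\mathrm{dist}(y,\partial D)\ge c(A)\min\{d(\gamma|_{[0,t]}),d(\gamma|_{[t,1]})\}$ with $c(A)$ a definite constant (e.g.\ $c(A)=(4A)^{-1}$ works after checking the cases). Setting $C(A)=c(A)^{-1}$ gives $\mathrm{cig}_d(\gamma,C(A))\subset D$. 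The length version is identical with $\ell$ in place of $d$, using $\ell(\gamma|_{[0,t]})+\ell(\gamma|_{[t,1]})=\ell(\gamma)$ and $|y-x_0|\le \ell(\gamma_1|_{[s,1]})$. The only genuine obstacle is the bookkeeping in the case analysis near the concatenation point $x_0$; there is no analytic difficulty, just the need to verify that the two lower bounds for $\mathrm{dist}(y,\partial D)$ together dominate $\min\{d(\gamma|_{[0,t]}),d(\gamma|_{[t,1]})\}$ in every configuration.
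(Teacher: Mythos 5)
Your overall strategy is viable and, in substance, parallels the paper's: both proofs combine the cigar conditions of $\gamma_1,\gamma_2$, the hypothesis at $x_0$ via $\dist(\gamma(t),\partial D)\geq \dist(x_0,\partial D)-|\gamma(t)-x_0|$, and diameter subadditivity at the concatenation point, in an elementary case analysis. But as written the argument stops exactly where the lemma's content lies, and two of the partial claims are wrong. First, the case you dismiss with ``when $d(\gamma_2)<d(\gamma_1|_{[0,s]})\le d(\gamma|_{[t,1]})$ is impossible'' is not impossible at all -- it is the main case: there $\min\{d(\gamma|_{[0,t]}),d(\gamma|_{[t,1]})\}=d(\gamma_1|_{[0,s]})$ can greatly exceed $\min\{d(\gamma_1|_{[0,s]}),d(\gamma_2)\}=d(\gamma_2)$, so your ``clean organizing inequality'' alone does not conclude. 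To close it you need the extra observation $d(\gamma|_{[t,1]})\le d(\gamma_1|_{[s,1]})+d(\gamma_2)$ and a split: writing $a_1=d(\gamma_1|_{[0,s]})$, $a_2=d(\gamma_1|_{[s,1]})$, $b=d(\gamma_2)$, if $a_2\ge a_1$ the $\gamma_1$-cigar finishes with constant $A$; if $a_2<a_1$ and $b\le a_2$ then $d(\gamma|_{[t,1]})\le 2a_2\le 2A\dist(y,\partial D)$; and if $a_2<a_1$ and $b>a_2$ your two displayed lower bounds give $\dist(y,\partial D)\ge (2A^2)^{-1}\min\{d(\gamma_1),b\}$, while $\min\{d(\gamma|_{[0,t]}),d(\gamma|_{[t,1]})\}\le\min\{a_1,a_2+b\}\le 2\min\{a_1,b\}$, yielding the bound with constant $4A^2$. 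Second, your asserted constant $c(A)=(4A)^{-1}$ (i.e.\ $C(A)\approx 4A$) is too strong: near $x_0$ the only protection is the $\gamma_1$-cigar through the short side $a_2$, and a configuration with $\dist(x_0,\partial D)=A^{-1}\min\{d(\gamma_1),b\}$ and $a_2$ comparable to it shows the method (and the truth of the matter) gives a constant of order $A^2$, consistent with the paper's $C(A)=(2A+1)A$. There is also a slip in the ``averaging'' computation: $\tfrac1{2A}\cdot\bigl(A^{-1}\min\{d(\gamma_1),d(\gamma_2)\}-a_2\bigr)$ produces $\tfrac1{2A^2}\min\{d(\gamma_1),d(\gamma_2)\}$, not $\tfrac1{2A}\min\{d(\gamma_1),d(\gamma_2)\}$ (harmless, but symptomatic of the unchecked bookkeeping).

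For comparison, the paper avoids the branching you ran into by a cleaner normalization: assume $d(\gamma_1)\le d(\gamma_2)$, so the hypothesis reads $d(\gamma_1)\le A r_0$ with $r_0=\dist(x_0,\partial D)$, parametrize $\gamma$ with $\gamma(1/2)=x_0$, and split only on whether $\gamma(t)$ lies in $B(x_0,r_0/2)$ or not, separately for $t\le 1/2$ and $t\ge 1/2$. Each of the four cases is a two-line estimate (near $x_0$ one uses $\dist(\gamma(t),\partial D)\ge r_0/2$; far from $x_0$ one uses that the subarc reaching $x_0$ already has diameter, or length, at least $r_0/2\ge (2A)^{-1}d(\gamma_1)$ and then invokes the relevant cigar), giving $C(A)=(2A+1)A$ uniformly and transferring verbatim to the length version. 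Your route can be completed as indicated above, but you must actually carry out the case analysis rather than assert it.
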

\begin{proof}
We present the proof in the case of $d$; there is no change if one considers $\ell$. Without loss of generality, $d(\gamma_1)\leq d(\gamma_2)$. Define $r_0=\dist(x_0,\partial D)$. Consider a parametrization $\gamma\colon [0,1]\to D$ such that $\gamma(1/2)=x_0$, $\gamma|_{[0,1/2]}$ is the curve $\gamma_1$, and $\gamma|_{[1/2,1]}$ is the curve $\gamma_2$ up to reparametrization.  Let $t\in [0,1/2]$. If $\gamma(t)\in  B(x_0,r_0/2)$, then 
$$\min\{d(\gamma|_{[0,t]}) , d(\gamma|_{[t,1]}) \}\leq d(\gamma|_{[0,t]}) \leq d(\gamma_1)\leq Ar_0\leq 2A\dist(\gamma(t),\partial D).$$
If $\gamma(t)\notin B(x_0,r_0/2)$, then 
$$d(\gamma|_{[t,1/2]})\geq \frac{r_0}{2} \geq \frac{1}{2A} d(\gamma_1)\geq \frac{1}{2A} d( \gamma|_{[0,t]}).$$
Since $\cig_d(\gamma_1,A)\subset D$, we have 
\begin{align*}
\min\{d(\gamma|_{[0,t]}) , d(\gamma|_{[t,1]}) \}&\leq d(\gamma|_{[0,t]}) \leq 2A \min \{d(\gamma|_{[0,t]}), d(\gamma|_{[t,1/2]}) \}\\ 
&\leq 2A^2 \dist(\gamma(t),\partial D).
\end{align*}
Next, let $t\in [1/2,1]$.  If $\gamma(t)\in  B(x_0,r_0/
2)$, since $\cig_d(\gamma_2,A)\subset D$, we have
\begin{align*}
\min\{d(\gamma|_{[0,t]}) , d(\gamma|_{[t,1]}) \}&\leq \min\{ d(\gamma_1)+d(\gamma|_{[1/2,t]}),d(\gamma|_{[t,1]}) \}\\
&\leq d(\gamma_1)+ \min\{ d(\gamma|_{[1/2,t]}),d(\gamma|_{[t,1]}) \}\\
&\leq Ar_0 + A\dist(\gamma(t), \partial D)\\
&\leq  3A\dist(\gamma(t), \partial D).
\end{align*}
Finally, suppose that $\gamma(t)\notin B(x_0,r_0/2)$, so
$$d(\gamma|_{[1/2,t]}) \geq \frac{r_0}{2} \geq \frac{1}{2A} d(\gamma_1).$$
Thus, 
$$d(\gamma|_{[0,t]}) \leq d(\gamma_1) + d(\gamma|_{[1/2,t]}) \leq (2A+1) d(\gamma|_{[1/2,t]}).$$
Since $\cig_d(\gamma_2,A)\subset D$, we conclude that 
\begin{align*}
\min\{d(\gamma|_{[0,t]}) , d(\gamma|_{[t,1]}) \}&\leq (2A+1)\min\{d(\gamma|_{[1/2,t]}) , d(\gamma|_{[t,1]}) \}\\
&\leq (2A+1) A \dist(\gamma(t),\partial D). 
\end{align*}
This completes the proof with $C(A)=(2A+1)A$.
\end{proof}

\begin{proof}[Proof of Theorem \ref{theorem:approximation}]
By Theorem \ref{theorem:vaisala}, we may assume that $\Omega$ is $(A,\rho_{\Omega})$-uniform for some $A\geq 1$. Let $D$ be a domain as in the statement. In particular, each component $S$ of $\R^2\setminus \Omega$ that is contained in $D$ satisfies $\diam S\leq \delta$.  We prove a preliminary statement.

\begin{claim}\label{theorem:approximation:claim}
Let $S$ be a component of $\R^2\setminus \Omega$ such that $S\subset D$. There exists a constant $M\geq 1$, depending on $A$, such that if $U$ is the open $(M^{-1}\diam S)$-neighborhood of $S$, then
\begin{align}\label{theorem:approximation:claim:first}
\dist(U,\R^2\setminus D)\geq  M^{-1}\diam S,
\end{align}
and moreover for every pair of points $x_1,x_2\in U$, there exists a path $\gamma \colon [0,1]\to U$ connecting $x_1,x_2$ such that  
\begin{enumerate}[label=\normalfont(\arabic*)]
	\item\label{cl1} $\dist(x_i,\partial D) \geq M^{-1}\max\{d(\gamma),\diam S\}$,
	\item\label{cl2} $d(\gamma)\leq M(\dist(x_1,S)+\dist(x_2,S)+|x_1-x_2|)$, and
	\item\label{cl3} $\cig_{d}(\gamma,M)\subset D.$
\end{enumerate}
\end{claim}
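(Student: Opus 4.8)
The plan is to combine Theorem \ref{theorem:relative_distance} (separation of components) to control the distance from $S$ to the other large components, and then to build the connecting path $\gamma$ by routing through a point close to $S$ where the distance to $\partial D$ is comparable to $\diam S$. First I would set $d_S=\diam S$ and observe that, since $S\subset D$, every component $S'$ of $\R^2\setminus D$ satisfies $\diam S'>\delta\geq d_S$ (the threshold is chosen so that the complementary components of $D$ are exactly the large ones); hence Theorem \ref{theorem:relative_distance} applied to the pair $S,S'$ in the inner $A$-uniform domain $\Omega$ gives $d_S=\min\{\diam S,\diam S'\}\leq C(A)\dist(S,S')$. Taking the infimum over all such $S'$ yields $\dist(S,\R^2\setminus D)\geq c(A)d_S$. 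Setting $M$ large enough that $M^{-1}\leq c(A)/2$, and $U=N_{M^{-1}d_S}(S)$, we get for $x\in U$ that $\dist(x,\R^2\setminus D)\geq \dist(S,\R^2\setminus D)-M^{-1}d_S\geq \tfrac12 c(A)d_S$, which is \eqref{theorem:approximation:claim:first} and also shows $\dist(x_i,\partial D)\geq M^{-1}d_S$, taking care of the $\diam S$-part of \ref{cl1}.

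Next I would construct $\gamma$. Fix a point $w_0\in S$ and let $x_1,x_2\in U$. Since $\Omega$ is $(A,\rho_\Omega)$-uniform, there is an $(A,\rho_\Omega)$-uniform curve in $\Omega$ joining $x_1$ and $x_2$; but this may wander far and need not stay in $U$, so instead I would join each $x_i$ to a fixed nearby point. Concretely, pick $y_i\in S$ with $|x_i-y_i|=\dist(x_i,S)\leq M^{-1}d_S$, and take the radial segment $\sigma_i=[x_i,y_i]$, then connect $y_1$ to $y_2$ by a path $\tau$ inside the slightly fattened $S$; since $S$ is connected and has $C(A)$-bounded turning by Theorem \ref{theorem:bounded_turning} (so is locally connected), Lemma \ref{lemma:bt_definitions} lets us take $\tau$ with $|\tau|\subset N_{M^{-1}d_S}(S)\subset U$ and $d(\tau)\leq \diam S+M^{-1}d_S\leq 2d_S$. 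The concatenation $\gamma=\sigma_1\cup\tau\cup\sigma_2$ then lies in $U$, has $d(\gamma)\leq \dist(x_1,S)+\dist(x_2,S)+2d_S$, and also $d(\gamma)\leq M(\dist(x_1,S)+\dist(x_2,S)+|x_1-x_2|)$ after noting $d_S\lesssim \dist(x_i,S)+\dist(x_i,\partial\Omega)$ is \emph{not} automatic — here I would instead argue that whenever both $x_i$ are very close to $S$ (so that $\dist(x_1,S)+\dist(x_2,S)+|x_1-x_2|$ is small compared to $d_S$), the segments are short and $d(\gamma)\approx d_S$, and bound $d_S\leq C|x_1-x_2|$ is false — so the right move is to replace $\tau$ by a \emph{sub-arc} of a bounded-turning path in $S$ joining $y_1$ to $y_2$, whose diameter is $\leq C(A)|y_1-y_2|\leq C(A)(|x_1-x_2|+\dist(x_1,S)+\dist(x_2,S))$. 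With this choice \ref{cl2} holds with $M=M(A)$, and $d(\gamma)\leq M d_S$ so \ref{cl1} is complete.

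For \ref{cl3} I would apply Lemma \ref{lemma:concatenation_cigar} twice. Each segment $\sigma_i$ has $d(\sigma_i)=\dist(x_i,S)\leq M^{-1}d_S$, and its cigar of any fixed aperture is contained in $N_{M^{-1}d_S}(S)\subset U\subset D$ provided $M$ is large relative to the aperture, because every point of $\sigma_i$ is within $M^{-1}d_S$ of $S$ and hence, by \eqref{theorem:approximation:claim:first}, at distance $\geq M^{-1}d_S$ from $\partial D$ while the cigar radius there is at most $d(\sigma_i)\leq M^{-1}d_S$ — so one checks $\cig_d(\sigma_i,2)\subset D$ directly. Similarly $\tau\subset U$ with $d(\tau)\leq 2d_S$ and every point of $\tau$ at distance $\geq M^{-1}d_S$ from $\partial D$, giving $\cig_d(\tau,2M)\subset D$. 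The hypothesis $\dist(x_0,\partial D)\geq A^{-1}\min\{d(\gamma_1),d(\gamma_2)\}$ of Lemma \ref{lemma:concatenation_cigar} at the junction points $y_1,y_2$ follows from \eqref{theorem:approximation:claim:first} since the shorter of the two arcs meeting there has diameter $\lesssim M^{-1}d_S\leq \dist(y_i,\partial D)$. Two applications of the lemma then give $\cig_d(\gamma,C(A))\subset D$, which is \ref{cl3}, and we absorb all constants into a single $M=M(A)$.

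The main obstacle is making \ref{cl2} hold with the \emph{specific} right-hand side $\dist(x_1,S)+\dist(x_2,S)+|x_1-x_2|$ rather than something like $\dist(x_i,S)+\diam S$: one cannot route through an arbitrary point of $S$, because the diameter of the connecting arc inside $S$ must be controlled by $|y_1-y_2|\leq |x_1-x_2|+\dist(x_1,S)+\dist(x_2,S)$, and this is exactly where the bounded-turning property of $S$ (Theorem \ref{theorem:bounded_turning}) is essential. Simultaneously keeping $\gamma$ inside $U$ — so that the cigar estimates are uniform and independent of $\delta$ — forces us to use the bounded-turning arc \emph{rather than} an $(A,\rho_\Omega)$-uniform curve of $\Omega$, and checking that all the aperture constants in Lemma \ref{lemma:concatenation_cigar} can be chosen depending only on $A$ is the bookkeeping that needs care.
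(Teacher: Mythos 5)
Your proposal is correct and follows essentially the same route as the paper: Theorem \ref{theorem:relative_distance} gives the separation of $S$ from the (necessarily larger) components of $\R^2\setminus D$, and the connecting path is built from the two nearest-point segments plus a bounded-turning path in $S$ via Theorem \ref{theorem:bounded_turning} and Lemma \ref{lemma:bt_definitions}, exactly as in the paper. The only cosmetic difference is in verifying \ref{cl3}: the paper does it directly, noting that $|\gamma|\subset U$ has diameter at most $3\diam S$ while every point of $U$ is at distance at least a constant times $\diam S$ from $\partial D$, so your two applications of Lemma \ref{lemma:concatenation_cigar} are valid but unnecessary.
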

\begin{proof}[Proof of Claim \ref{theorem:approximation:claim}]
Let $T$ be a component of $\R^2\setminus D$. By the assumptions on the domain $D$ we have $\diam S\leq \delta<\diam T$. 
By Theorem \ref{theorem:relative_distance} there exists a constant $C_1(A)\geq 1$ such that
$$\dist(S,T)\geq C_1(A)^{-1}\min\{\diam S,\diam T\} = C_1(A)^{-1}\diam S.$$
Since this holds for all components $T$ of $\R^2\setminus D$, we obtain
\begin{align*}
\dist(S,\partial D)=\dist(S,\R^2\setminus D) \geq C_1(A)^{-1}\diam S.
\end{align*} 
Let $M_1=2C_1(A)$ and $M\geq M_1$. If $x\in U$, then $\dist(x,S)<M^{-1}\diam S$. This implies that $x\in D$ and 
\begin{align}\label{theorem:approximation:distances_lower}
\dist(x,\partial D)> (C_1(A)^{-1}-M^{-1})\diam S \geq  M_1^{-1}\diam S\geq M^{-1}\diam S.
\end{align}
This proves \eqref{theorem:approximation:claim:first}.

Next, let $x_1,x_2\in U$. For $i=1,2$ consider a line segment $L_i$ from $x_i$ to its closest point $x_i'\in S$. Then $L_i$ is contained in $U$. By Theorem \ref{theorem:bounded_turning} and Lemma \ref{lemma:bt_definitions}, there exists a path $\beta$ in $S$ connecting $x_1'$ and $x_2'$ such that 
$$d(\beta)\leq C_2(A) |x_1'-x_2'| \leq C_2(A)(\dist(x_1,S)+\dist(x_2,S) +|x_1-x_2|)$$
for some constant $C_2(A)\geq 1$. The concatenation $\gamma$ of the paths $L_1,L_2$, and $\beta$ satisfies
\begin{align*}
d(\gamma) &\leq 2C_2(A)(\dist(x_1,S)+\dist(x_2,S) +|x_1-x_2|).
\end{align*}
This verifies \ref{cl2}, provided that $M\geq 2C_2(A)$.
Moreover, since $|\gamma|\subset U$, by \eqref{theorem:approximation:distances_lower} we have 
$$d(\gamma)\leq  \diam U\leq (1+2M^{-1})\diam S \leq 3\diam S\leq 3M_1 \dist(x,\partial D)$$
for each point $x$ on $\gamma$. Thus, if we take $M\geq 3M_1$, then $\cig_d(\gamma, M)\subset D$ and \ref{cl3} is true. Also, the above inequality and \eqref{theorem:approximation:distances_lower} verify \ref{cl1}. Summarizing, the conclusions of the claim hold for $M=\max\{ 3M_1, 2C_2(A)\}$.
\end{proof}

\noindent
\textbf{Reductions.}  We assume for the moment that any two points of $\Omega$ can be connected by a $(C(A), \rho_D)$-uniform curve (with respect to the domain $D$). We will show that the same holds for any two points of $D$.

Let $a,b\in D$ be distinct points. Assume that $a$ lies in a component $S$ of $\R^2\setminus \Omega$. If $b\in S$, then by Claim \ref{theorem:approximation:claim} there exists an $(M,\rho_D)$-uniform curve connecting $a$ and $b$. Next, suppose that $b\notin S$.  Consider a curve $\gamma_D$ in $D$ connecting $a,b$ with $d(\gamma_D)\leq 2\rho_{D}(a,b)$. The curve $\gamma_D$ is not entirely contained in $S$. Hence, there exists a point $a'\in \Omega$ that lies on the curve $\gamma_D$ and satisfies 
$$\dist(a',S)<\min\{M^{-1}\diam S, d(\gamma_D)\}.$$
By Claim \ref{theorem:approximation:claim}, there exists a curve $\gamma_a$ connecting $a$ and $a'$ such that 
$$d(\gamma_a)\leq M(\dist(a',S)+ |a-a'|)\leq 2M d(\gamma_D) \quad \textrm{and} \quad \cig_d(\gamma_a,M)\subset D.$$
Similarly, if $b$ lies in another component of $\R^2\setminus \Omega$ there exists a curve $\gamma_b$ connecting $b$ to a point $b'\in \Omega$ that lies on the curve $\gamma_D$ and satisfies
$$d(\gamma_b)\leq  2M d(\gamma_D) \quad \textrm{and} \quad \cig_d(\gamma_b,M)\subset D.$$
If $b\in \Omega$, then $\gamma_b$ is taken to be a constant path and the above hold vacuously. 

By our assumption, there exists a $(C(A),\rho_D)$-uniform curve $\gamma'$ connecting the points $a',b'\in \Omega$. Then the concatenation $\gamma$ of $\gamma_a,\gamma',\gamma_b$ satisfies
\begin{align*}
d(\gamma)&\leq d(\gamma_a)+d(\gamma')+d(\gamma_b) \leq 4Md(\gamma_D)+ C(A)\rho_D(a',b')\\
&\leq (4M+C(A))d(\gamma_D) \leq 2(4M+C(A)) \rho_D(a,b).
\end{align*}
By Claim \ref{theorem:approximation:claim}, the common endpoint $a'$ of $\gamma_a$ and $\gamma'$ satisfies 
$$\dist(a',\partial D)\geq M^{-1}d(\gamma_a)$$
and an analogous inequality is true for $b'$. By applying Lemma \ref{lemma:concatenation_cigar} twice, we obtain $\cig_d(\gamma, C'(A))\subset D$.  Therefore, $\gamma$ is the desired curve.

\medskip
\noindent
\textbf{Main argument.}
It remains to show that if $a,b\in \Omega$ are distinct points, then there exists a $(C(A),\rho_D)$-uniform curve connecting them. Let $H>2$ to be determined, depending only on $A$. Suppose that $\rho_{\Omega}(a,b)\leq H\rho_D(a,b)$. Consider an $(A,\rho_{\Omega})$-uniform curve $\gamma$ connecting $a$ and $b$. Then $\cig_d(\gamma,A)\subset \Omega\subset D$ and $d(\gamma)\leq A\rho_{\Omega}(a,b)\leq AH\rho_D(a,b)$, so $\gamma$ is an $(AH, \rho_D)$-uniform curve. Next, suppose that 
\begin{align}\label{theorem:approximation:h}
\rho_{\Omega}(a,b)> H\rho_D(a,b).
\end{align}
Let $\gamma_D$ be a curve in $D$ connecting $a$ and $b$ such that $d(\gamma_D)\leq 2\rho_D(a,b)$. We claim that there exists a component $S$ of $\R^2\setminus \Omega$ that intersects $\gamma_D$ and
\begin{align}\label{theorem:approximation:s}
\diam S\geq \frac{H-2}{4}d(\gamma_D).
\end{align}
Suppose, instead that all components of $\R^2\setminus \Omega$ that intersect $\gamma_D$ satisfy the reverse inequality. By Lemma \ref{lemma:path_omega}, there exists a curve $\gamma_\Omega$ in $\Omega$ connecting $a$ and $b$ with 
$$d(\gamma_{\Omega})< 2\frac{H-2}{4}d(\gamma_D)+d(\gamma_D) = \frac{H}{2} d(\gamma_D).$$
Thus, 
$$\rho_\Omega(a,b)\leq d(\gamma_{\Omega}) < \frac{H}{2}d(\gamma_D) \leq H \rho_D(a,b).$$
This contradicts \eqref{theorem:approximation:h}. Thus, there exists a component $S$ of $\R^2\setminus \Omega$ that intersects $\gamma_D$ and satisfies \eqref{theorem:approximation:s}.

Suppose that there exist two such components $S_1$ and $S_2$. By Theorem \ref{theorem:relative_distance}, we have
$$d(\gamma_D)\geq \dist(S_1,S_2) \geq C_1(A)^{-1} \min\{\diam S_1,\diam S_2\}\geq C_1(A)^{-1}\frac{H-2}{4}d(\gamma_D).$$
If we require that $H>4C_1(A)+2$, we obtain a contradiction. Thus, there exists a unique component $S$ of $\R^2\setminus \Omega$ that intersects $\gamma_D$ and satisfies \eqref{theorem:approximation:h}.

Let $\gamma_a$ be a subpath of $\gamma_D$ from $a$ to a point $a_1\in \Omega$ so that $\gamma_a$ does not intersect $S$ and 
\begin{align}\label{theorem:approximation:a1}
\dist(a_1,S)< \min\{ M^{-1}\diam S, d(\gamma_D) \},
\end{align}
where $M$ is as in Claim \ref{theorem:approximation:claim}. Then all components $T$ of $\R^2\setminus \Omega$ that intersect $\gamma_a$ satisfy $\diam T< \frac{H-2}{4} d(\gamma_D) $. By Lemma \ref{lemma:path_omega}, we see that
$$\rho_\Omega(a,a_1)< 2\frac{H-2}{4}d(\gamma_D)+d(\gamma_D)= \frac{H}{2}d(\gamma_D).$$
Since $\Omega$ is inner uniform, there exists an $(A,\rho_{\Omega})$-uniform curve $\widetilde \gamma_a$ connecting $a$ and $a_1$. In particular, 
\begin{align}\label{theorem:approximation:a_tilde}
d(\widetilde \gamma_a) \leq A\rho_\Omega(a,a_1)\leq \frac{AH}{2}d(\gamma_D).
\end{align}
Similarly, there exists an $(A,\rho_{\Omega})$-uniform curve $\widetilde \gamma_b$  connecting $b$ to a point $b_1\in \Omega$ such that
\begin{align}\label{theorem:approximation:b1}
\dist(b_1,S)< \min\{ M^{-1}\diam S, d(\gamma_D) \} \,\,\,\textrm{and}\,\,\,d(\widetilde \gamma_b)\leq \frac{AH}{2}d(\gamma_D).
\end{align}

By Claim \ref{theorem:approximation:claim}, \eqref{theorem:approximation:a1}, and \eqref{theorem:approximation:b1}, there exists a path $\beta$ connecting $a_1,b_1$ such that
\begin{align}\label{theorem:approximation:beta_main}
d(\beta)\leq M( \dist(a_1,S)+\dist(b_1,S)+|a_1-b_1|)\leq 3Md(\gamma_D),
\end{align}
$\cig_d(\beta,M)\subset D$, and $\dist(a_1,\partial D)\geq M^{-1}\diam S$. The latter condition, combined with  \eqref{theorem:approximation:s}, \eqref{theorem:approximation:a_tilde}, and  \eqref{theorem:approximation:b1}, implies that 
$$\dist(a_1,\partial D) \geq M^{-1}\diam S \geq \frac{H-2}{4M}d(\gamma_D)\geq \frac{H-2}{2MAH}\max\{d(\widetilde \gamma_a),d(\widetilde \gamma_b)\};$$
the same holds for $b_1$ in place of $a_1$. Denote by $\gamma$ the concatenation of $\widetilde \gamma_a$, $\beta$, and $\widetilde \gamma_b$. By applying Lemma \ref{lemma:concatenation_cigar} twice, we obtain $\cig_d(\gamma, C_3(A)) \subset D$. Moreover, by \eqref{theorem:approximation:a_tilde},  \eqref{theorem:approximation:b1}, and \eqref{theorem:approximation:beta_main},
$$d(\gamma) \leq d(\widetilde \gamma_a)+d(\beta)+d(\widetilde \gamma_b)\leq (AH+3M)d(\gamma_D)\leq 2(AH+3M)\rho_D(a,b).$$
This completes the proof.
\end{proof}

We will need the following consequence of Theorem \ref{theorem:approximation}. Here, a \textit{neighborhood of $\infty$} is an open subset of the plane that contains the exterior of a ball.

\begin{corollary}[Approximation of inner uniform domains]\label{corollary:approximation}
Let $A\geq 1$ and $\Omega\subset {\R^2}$ be an inner $A$-uniform domain that is bounded or contains a neighborhood of $\infty$. There exists a sequence of finitely connected inner $C(A)$-uniform domains $\Omega_n$, $n\in \N$, such that 
\begin{enumerate}[label=\normalfont(\roman*)]
\item $\Omega \subset \Omega_{n+1}\subset \Omega_{n}$ for each $n\in \N$,
\item each non-degenerate component of $\R^2\setminus \Omega$ is a component of $\R^2\setminus \Omega_n$ for all sufficiently large $n\in \N$, and
\item the set $(\bigcap_{n=1}^\infty \Omega_n) \setminus \Omega$ is totally disconnected.  
\end{enumerate}
\end{corollary}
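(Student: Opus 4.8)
The plan is to apply Theorem \ref{theorem:approximation} along a sequence of thresholds $\delta_n\to 0$ and then verify the three conclusions. First I would fix a sequence $\delta_1>\delta_2>\cdots$ with $\delta_n\to 0$, and for each $n$ let $\Omega_n\subset\R^2$ be the domain whose complement is the union of those components $S$ of $\R^2\setminus\Omega$ with $\diam S>\delta_n$. By Theorem \ref{theorem:approximation}, each $\Omega_n$ is inner $C(A)$-uniform with $C(A)$ independent of $n$. Property (i) is immediate: since $\delta_{n+1}<\delta_n$, more components survive at stage $n+1$, so $\R^2\setminus\Omega_n\subset\R^2\setminus\Omega_{n+1}\subset\R^2\setminus\Omega$, giving $\Omega\subset\Omega_{n+1}\subset\Omega_n$. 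Property (ii) also follows quickly: if $S$ is a non-degenerate component of $\R^2\setminus\Omega$, then $\diam S>0$, so for all $n$ large enough $\diam S>\delta_n$ and hence $S\subset\R^2\setminus\Omega_n$; moreover $S$ is in fact a \emph{component} of $\R^2\setminus\Omega_n$, because $\R^2\setminus\Omega_n\subset\R^2\setminus\Omega$ and $S$ is already a maximal connected subset of the larger set $\R^2\setminus\Omega$.

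The first genuine issue is that the $\Omega_n$ need not be \emph{finitely} connected as defined: a priori infinitely many components of $\R^2\setminus\Omega$ could have diameter exceeding $\delta_n$. Here I would invoke Corollary \ref{corollary:components}: in the bounded case, all of $\R^2\setminus\Omega$ lies in some ball $B(0,R)$, so the number of components with diameter $>\delta_n$ is at most $C(A)(1+R^2/\delta_n^2)<\infty$; in the case where $\Omega$ contains a neighborhood of $\infty$, the complement $\R^2\setminus\Omega$ is again a bounded set (it is compact), so the same count applies. Hence each $\Omega_n$ has finitely many complementary components, i.e.\ is finitely connected. (If one insists $\Omega_n$ be a domain even when a single component is removed, one should also note $\R^2\setminus\Omega_n$ is closed and its complement connected — the latter because $\Omega\subset\Omega_n$ and $\Omega$ is connected and every component of $\R^2\setminus\Omega$ removed at stage $n$ is ``attached'' to $\Omega_n$ only from outside; more carefully, connectivity of $\Omega_n$ follows since it is obtained from $\Omega$ by filling in complementary components, and filling in a component of the complement of a domain keeps it a domain.)

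For property (iii), I would argue that $\bigcap_n\Omega_n$ differs from $\Omega$ only by points lying in \emph{degenerate} (single-point) components of $\R^2\setminus\Omega$ together with small pieces of components that are never fully removed — but in fact the cleanest statement is: a point $x\in(\bigcap_n\Omega_n)\setminus\Omega$ lies in some component $S$ of $\R^2\setminus\Omega$ with $x\notin\R^2\setminus\Omega_n$ for all $n$, i.e.\ $\diam S\le\delta_n$ for all $n$, forcing $\diam S=0$, so $S=\{x\}$. Thus $(\bigcap_n\Omega_n)\setminus\Omega$ is contained in the union of the degenerate (point) components of $\R^2\setminus\Omega$. To see this set is totally disconnected, suppose it contained a connected subset $E$ with more than one point; then $\br E$ is a connected subset of $\R^2\setminus\Omega$ with positive diameter, hence lies in a single component $S$ of $\R^2\setminus\Omega$ with $\diam S\ge\diam E>0$, contradicting that every point of $E$ lies in a degenerate component. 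Therefore $(\bigcap_n\Omega_n)\setminus\Omega$ is totally disconnected, completing the proof.

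The main obstacle I anticipate is not any single step but bookkeeping around the definition of ``finitely connected'': one must be sure that removing finitely many compact complementary components from $\Omega$ (equivalently, filling in all the \emph{small} components) yields an honest domain, and that Corollary \ref{corollary:components} genuinely bounds the count in both the bounded and neighborhood-of-$\infty$ cases (for the latter, noting $\R^2\setminus\Omega$ is compact is the key observation). Everything else is a direct unwinding of the thresholding construction.
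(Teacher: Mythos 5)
Your construction is the same as the paper's: threshold the complementary components at a sequence $\delta_n\to 0$ (the paper takes $\delta_n=1/n$), invoke Corollary \ref{corollary:components} for finiteness and Theorem \ref{theorem:approximation} for the uniform constant $C(A)$, and then verify (i)--(iii) directly; your verifications of (i), (ii), and (iii) are all correct. The one misstep is in the finiteness step for the case that $\Omega$ is bounded: you assert that then ``all of $\R^2\setminus\Omega$ lies in some ball $B(0,R)$,'' which is false --- if $\Omega$ is bounded, its complement is unbounded; it is the other case (where $\Omega$ contains a neighborhood of $\infty$) in which $\R^2\setminus\Omega$ is compact, and that case you treat correctly. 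The repair is immediate and is exactly what the paper records: if $\Omega\subset B(0,R)$, then $\R^2\setminus\br B(0,R)$ is a connected subset of $\R^2\setminus\Omega$, so it lies in a single (unbounded) component, and every other component, being disjoint from that one, is contained in $\br B(0,R)$; hence all components of $\R^2\setminus\Omega$, with the exception of at most one, lie in a ball, all of them meet $B(0,R+1)$, and Corollary \ref{corollary:components} bounds the number of those with diameter exceeding $\delta_n$ (the single unbounded component has infinite diameter and is removed at every stage, which does not affect finiteness). With this correction your argument coincides with the paper's proof.
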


\begin{proof}
Since $\Omega$ is bounded or contains a neighborhood of $\infty$, all components of $\R^2\setminus \Omega$, with the exception of at most one component, are contained in a ball. By Corollary \ref{corollary:components}, for each $n\in \N$ there exist at most finitely many components $S$ of $\R^2\setminus \Omega$ with $\diam F>1/n$. For $n\in \N$, let $\Omega_n$ be the finitely connected domain whose boundary is the union of those components. By Theorem \ref{theorem:approximation}, $\Omega_n$ is an inner $C(A)$-uniform domain for each $n\in \N$. By construction, $\Omega_n\supset \Omega_{n+1}\supset \Omega$ for every $n\in \N$, and $(\bigcap_{n=1}^\infty \Omega_n) \setminus \Omega$ consists of the point boundary components of $\Omega$. 
\end{proof}

\subsection{Conformal invariance of uniformity}

A homeomorphism $f\colon (X,d_X)\to (Y,d_Y)$ between metric spaces is \textit{quasisymmetric} if there exists a homeomorphism $\eta\colon [0,\infty)\to [0,\infty)$ such that  for every triple of distinct points $x_i\in X$, $i=1,2,3$, and for $y_i=f(x_i)$, $i=1,2,3$, we have
\begin{align*}
\frac{d_Y(y_1,y_2)}{d_Y(y_1,y_3)}\leq \eta\left(\frac{d_X(x_1,x_2)}{d_X(x_1,x_3)}\right).
\end{align*}
In that case we say that $f$ is $\eta$-quasisymmetric. In general, a homeomorphism $\eta\colon [0,\infty)\to [0,\infty)$ as above is called a \textit{distortion function}. 

\begin{lemma}[Quasisymmetric invariance of uniform curves]\label{lemma:qs_uniform_curves}
Let $\Omega,D\subset \R^2$ be domains and $f\colon (\Omega,\rho_\Omega)\to (D,\rho_D)$ be an $\eta$-quasisymmetric map for some distortion function $\eta$. If $\gamma$ is an $(A,\rho_{\Omega})$-uniform curve for some $A\geq 1$, then $f\circ\gamma$ is an $(A',\rho_D)$-uniform curve for $A'=2\eta(2A)$.
\end{lemma}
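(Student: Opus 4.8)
The plan is to verify directly the two defining conditions of an $(A',\rho_D)$-uniform curve for $\eta=f\circ\gamma$, exploiting only the quasisymmetry inequality and the corresponding properties of $\gamma$. Write $\gamma\colon[0,1]\to\Omega$ with endpoints $a=\gamma(0)$, $b=\gamma(1)$, and set $\beta=f\circ\gamma$, with endpoints $f(a),f(b)$.

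First I would record a basic comparison estimate: for a quasisymmetric map $f$, images of sets of comparable inner diameter metric are themselves controlled. Concretely, for any $t\in[0,1]$, the pieces $\gamma|_{[0,t]}$ and $\gamma|_{[t,1]}$ have $\rho_\Omega$-diameters at most $d(\gamma)$, and an elementary argument (testing the quasisymmetry inequality against the point realizing the maximum of $d(\beta|_{[0,t]})$ or $d(\beta|_{[t,1]})$ and against the opposite endpoint $f(b)$ or $f(a)$) shows that $d(\beta|_{[0,t]})\le \eta\big(d(\gamma|_{[0,t]})/\rho_\Omega(a,b)\big)\,\rho_D(f(a),f(b))$ and symmetrically for the other piece, and also $d(\beta)\le \eta(d(\gamma)/\rho_\Omega(a,b))\,\rho_D(f(a),f(b))$. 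Since $\gamma$ is $(A,\rho_\Omega)$-uniform, $d(\gamma)\le A\rho_\Omega(a,b)$, hence $d(\beta)\le \eta(A)\,\rho_D(f(a),f(b))$, which already gives the quasiconvexity condition with constant $\eta(A)\le 2\eta(2A)=A'$; I would keep the slack in the constant because the same type of estimate appears in the cigar condition with factor $2$.

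For the cigar condition fix $t\in[0,1]$ and a point $w$ on $\beta$ close to realizing $\dist(\beta(t),\partial D)$; I must bound $\min\{d(\beta|_{[0,t]}),d(\beta|_{[t,1]})\}$ by $A'\dist(\beta(t),\partial D)$. Say $d(\beta|_{[0,t]})\le d(\beta|_{[t,1]})$, and let $u\in[0,t]$ realize (nearly) $d(\beta|_{[0,t]})$, so $\rho_D(\beta(t),\beta(u))$ is comparable to $d(\beta|_{[0,t]})$; applying quasisymmetry to the triple $\gamma(t),\gamma(u)$ and a point $\gamma(s)$ with $s\in[t,1]$ chosen so that $\rho_\Omega(\gamma(t),\gamma(s))$ is comparable to $d(\gamma|_{[t,1]})\ge\min\{d(\gamma|_{[0,t]}),d(\gamma|_{[t,1]})\}$, and using the $(A,\rho_\Omega)$-cigar bound $\min\{d(\gamma|_{[0,t]}),d(\gamma|_{[t,1]})\}\le A\dist(\gamma(t),\partial\Omega)$, I get $d(\beta|_{[0,t]})\lesssim \eta(A)\,\rho_D(\beta(t),\partial D)$ — but this is where the main subtlety lies: the quasisymmetry is with respect to $\rho_D$, whereas the cigar condition is phrased with the Euclidean distance $\dist(\cdot,\partial D)$. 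One must convert $\rho_D(\beta(t),\partial D)$ (or more precisely the $\rho_D$-distance from $\beta(t)$ to a suitable comparison point near $\partial D$) into a Euclidean distance bound; this should be handled by choosing, inside the definition of $\dist(\gamma(t),\partial\Omega)$, an actual boundary approach point and noting that under a homeomorphism onto $(D,\rho_D)$ nearby boundary behaviour is preserved, together with the general inequality $|x-y|\le\rho_D(x,y)$ which at least gives control in the easy direction. I expect this conversion between the inner diameter metric and the Euclidean metric on the target side to be the one genuinely delicate point; everything else is bookkeeping with $\eta$ and with the elementary fact that $\eta$ is increasing and $\eta(A)\le\eta(2A)$.

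Once both conditions are checked, I would collect the constants: the quasiconvexity gives a factor $\eta(A)$ and the cigar condition gives a factor of the form $C\eta(CA)$ for a small absolute constant $C$ coming from the "nearly realizing" choices of $u$ and $s$; tracking these carefully yields that $A'=2\eta(2A)$ suffices, completing the proof. The only inputs used are the quasisymmetry of $f$, monotonicity of $\eta$, the definition of an $(A,\rho_\Omega)$-uniform curve, and the trivial comparison $|x-y|\le\rho_D(x,y)$, so no earlier result beyond the definitions is needed.
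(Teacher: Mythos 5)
Your quasiconvexity step is essentially the paper's argument (modulo a factor $2$: the diameter of $f\circ\gamma$ is bounded by \emph{twice} the supremum of $\rho_D(f(a),f(\cdot))$ over the curve, so one gets $d(f\circ\gamma)\le 2\eta(A)\rho_D(f(a),f(b))$, which is still $\le A'\rho_D(f(a),f(b))$). The cigar step, however, contains a genuine gap, and it is exactly the point you flag and then leave unresolved. Applying the quasisymmetry to a triple of points lying \emph{on the curve}, $\gamma(t),\gamma(u),\gamma(s)$, can only produce a bound of the form $\rho_D(\beta(t),\beta(u))\le\eta(\cdot)\,\rho_D(\beta(t),\beta(s))$ (writing $\beta=f\circ\gamma$), i.e.\ a comparison with the distance to another point of the image curve; this quantity has no a priori relation to $\dist(\beta(t),\partial D)$, which may be far smaller, so the asserted inequality $d(\beta|_{[0,t]})\lesssim\eta(A)\,\dist(\beta(t),\partial D)$ does not follow from that application. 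The appeal to ``nearby boundary behaviour is preserved under a homeomorphism onto $(D,\rho_D)$'' is not an argument: that preservation is precisely what must be proved, and it is false for general homeomorphisms. (A smaller issue: the reduction should be taken on the domain side, $d(\gamma|_{[0,t]})\le d(\gamma|_{[t,1]})$, since it is the cigar condition for $\gamma$ that has to be invoked, and the minimum on the image side is then automatically dominated by $d(\beta|_{[0,t]})$.)

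The missing idea, which is how the paper argues, is to take the comparison point of the quasisymmetry triple \emph{off the curve}: set $p=d(\gamma|_{[0,t]})\le d(\gamma|_{[t,1]})$ and let $w$ be any point of $\Omega\setminus\cig_d(\gamma,2A)$. The inclusion $\cig_d(\gamma,2A)\subset\cig_d(\gamma,A)\subset\Omega$ forces $|\gamma(t)-w|\ge p/(2A)$, hence $\rho_\Omega(\gamma(t),w)\ge p/(2A)$, while $\rho_\Omega(\gamma(t),\gamma(s))\le p$ for all $s\in[0,t]$; quasisymmetry then yields $\rho_D(\beta(t),\beta(s))\le\eta(2A)\,\rho_D(\beta(t),f(w))$ for \emph{every} such $w$. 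Since the closure of $\cig_d(\gamma,2A)$ is a compact subset of $\Omega$, its image under $f$ misses a neighbourhood of $\partial D$ inside $D$; letting $f(w)$ run along the straight segment from $\beta(t)$ towards its nearest point of $\partial D$ (this segment lies in $D$, so along it $\rho_D(\beta(t),f(w))\le|\beta(t)-f(w)|$), one gets $\rho_D(\beta(t),f(w))\to\dist(\beta(t),\partial D)$ and hence $d(\beta|_{[0,t]})\le 2\eta(2A)\dist(\beta(t),\partial D)$, which is the cigar condition with $A'=2\eta(2A)$. Without this choice of comparison points outside the enlarged cigar, the conversion from the inner metric $\rho_D$ to the Euclidean distance to $\partial D$ --- the step you correctly identify as the delicate one --- cannot be carried out, so as written the proposal does not prove the cigar condition.
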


\begin{proof}
Suppose that $\gamma\colon [0,1]\to \Omega$ is  an $(A,\rho_\Omega)$-uniform curve connecting two points $a,b\in \Omega$. We have $d(\gamma)\leq A\rho_{\Omega}(a,b)$. There exists a point $c$ on  $\gamma$ such that 
\begin{align*}
\frac{d(f\circ \gamma)}{ \rho_D(f(a),f(b))} \leq \frac{2\rho_D(f(a),f(c))}{\rho_D(f(a),f(b))} \leq 2 \eta \bigg( \frac{\rho_\Omega(a,c)}{\rho_{\Omega}(a,b)}\bigg) \leq 2\eta \left(\frac{d(\gamma)}{\rho_{\Omega}(a,b)}\right)\leq 2\eta(A).
\end{align*}
Next, we will show that $\cig_{d}(f\circ \gamma, 2\eta(2A))\subset D$. Let $t\in [0,1]$, $z=\gamma(t)$, and without loss of generality suppose that $p\coloneqq d(\gamma|_{[0,t]})\leq d(\gamma|_{[t,1]})$. We have $\cig_d(\gamma, 2A)\subset \cig_d(\gamma,A)\subset \Omega$ and in fact the closure of $\cig_d(\gamma, 2A)$ is a compact subset of $\Omega$. Let $w\in \Omega\setminus \cig_d(\gamma, 2A)$. This implies that $|z-w|\geq p(2A)^{-1}$. Therefore, for each $s\in [0,t]$ and for $x=\gamma(s)$ we have
$$|z-x|\leq p \leq 2A |z-w|.$$
We conclude that 
$$|f(z)-f(x)|\leq \eta(2A) |f(z)-f(w)|.$$
Since $s\in [0,t]$ is arbitrary and $f(w)\in  D\setminus f(\cig_d(\gamma, 2A))$ is arbitrary, we have
$$ d( f\circ \gamma|_{[0,t]})\leq 2\eta(2A) \dist(f(z), \partial D).$$
This shows that $\cig_d(f\circ \gamma, 2\eta(2A) )\subset D$, as desired. 
\end{proof}

A metric space $(X,d_X)$ is \textit{linearly locally connected} (LLC) if there exists a constant $M\geq 1$ such that for each ball $B_{d_X}(a,r)$ in $X$ the following two conditions hold.
\begin{enumerate}[label=\normalfont(LLC$_{\arabic*}$)]
\item\label{llc1} For every $x,y\in B_{d_X}(a,r)$ there exists a connected set $E\subset B_{d_X}(a,Mr)$ that contains $x$ and $y$.
\item\label{llc2} For every $x,y\in X\setminus B_{d_X}(a,r)$ there exists a connected set $E\subset X\setminus B_{d_X}(a,r/M)$ that contains $x$ and $y$.
\end{enumerate}
In that case, we say that $X$ is $M$-LLC.

\begin{lemma}\label{lemma:circle_llc}
Let $\Omega\subset \R^2$ be a circle domain. Then $\Omega$ is $1$-LLC.
\end{lemma}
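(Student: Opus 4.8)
The plan is to reformulate the $1$-LLC property as a single connectivity statement and then prove that statement by peeling off the complementary components of $\Omega$ in two stages. Throughout, $\Omega$ carries the Euclidean metric, so a ball $B_{d_X}(a,r)$ in $X=\Omega$ is just $B(a,r)\cap\Omega$. First I would record the structure of planar circle domains: each component of $\R^2\setminus\Omega$ is a single point, a closed round disk, a closed half-plane, or the complement of an open round disk, and these components are pairwise disjoint. The reformulation I aim for is: \emph{for every set $W$ that is an open round disk or the complement of a closed round disk, $\Omega\cap W$ is connected.} Granting this, LLC$_1$ with $M=1$ is immediate with $E=B(a,r)\cap\Omega$ (taking $W=B(a,r)$); and LLC$_2$ with $M=1$ follows by taking $W=\R^2\setminus\overline{B(a,r)}$, so that $\Omega\cap W$ is connected, and attaching to $x$ and $y$ — in case they happen to lie on $\partial B(a,r)$ — short radial segments pointing away from $a$, which stay in $\Omega\setminus B(a,r)$ since $\Omega$ is open; the degenerate case $\Omega\cap W=\emptyset$ forces $\Omega\subset B(a,r)$, so $\Omega\setminus B(a,r)=\emptyset$ and there is nothing to prove. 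Finally, since a M\"obius transformation maps circle domains to circle domains and $W$ to a round disk of the same type, I may assume $W$ is a bounded round disk $D$.

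To prove $\Omega\cap D$ is connected, I would fix $x,y\in\Omega\cap D$ and join them by a curve $\gamma_1$ in $D$; set $d_1=\dist(|\gamma_1|,\partial D)>0$, fix a slightly shrunken concentric disk $D'$ with $|\gamma_1|\subset D'$ and $\dist(\partial D',\partial D)\ge d_1/2$, and put $c=d_1/100$. Stage one disposes of the \emph{large} complementary components. Because disjoint round disks of diameter at least $c$ inside a bounded region are finite in number (and there are at most two disjoint half-planes and at most one complement of a disk), only finitely many components $K_1,\dots,K_n$ of $\R^2\setminus\Omega$ have diameter $\ge c$ and meet a fixed bounded neighborhood of $\overline{D'}$. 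Each $\overline{D'}\cap K_i$ is a nonempty compact connected set and these are pairwise disjoint, so by Zoretti's theorem (Theorem~\ref{theorem:zoretti}) I can surround each $\overline{D'}\cap K_i$ by a Jordan region whose boundary is disjoint from $\bigcup_j K_j$ and lies in a small neighborhood of $K_i$, hence inside $D$ at distance $\ge d_1/4$ from $\partial D$; splicing these collars into $\gamma_1$ exactly as in the proof of Lemma~\ref{lemma:path_omega} then produces a curve $\gamma_2$ in $D$ from $x$ to $y$ with $\dist(|\gamma_2|,\partial D)\ge d_1/4$ such that every component of $\R^2\setminus\Omega$ meeting $|\gamma_2|$ has diameter $<c$.

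Stage two disposes of the \emph{small} components: applying Lemma~\ref{lemma:path_omega} to $\gamma_2$ with parameter $c$ produces a curve $\widetilde\gamma$ in $\Omega$ joining $x$ and $y$ with $|\widetilde\gamma|\subset N_c(|\gamma_2|)$. Since $c=d_1/100<d_1/4\le\dist(|\gamma_2|,\partial D)$, we have $N_c(|\gamma_2|)\subset D$, so $\widetilde\gamma$ connects $x$ and $y$ inside $\Omega\cap D$. This establishes the connectivity statement, and hence the lemma.

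The routine parts are the counting argument and the openness/continuity bookkeeping. The one point I expect to require genuine care is the construction of the collars in stage one when some of the $K_i$ are unbounded (half-planes, or the single complement of a disk) and approach each other or $\partial D$ tangentially; since they are finitely many and pairwise disjoint, Zoretti's theorem still supplies Jordan boundaries avoiding $\bigcup_jK_j$, and taking the surrounding neighborhoods small enough keeps everything inside $D$ — but making this precise, and verifying that the spliced curve $\gamma_2$ indeed meets no complementary component of diameter $\ge c$, is the technical heart of the argument, entirely parallel to (and no harder than) the proof of Lemma~\ref{lemma:path_omega}.
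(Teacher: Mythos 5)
Your overall skeleton matches the paper's: reduce (LLC$_2$) to the disk case by a M\"obius inversion, remove the small complementary components with Lemma~\ref{lemma:path_omega}, and treat the finitely many large components separately. But stage one contains a genuine gap. You claim that Zoretti's theorem supplies, for each large component $K_i$, a Jordan curve that encloses $\overline{D'}\cap K_i$, avoids $\bigcup_j K_j$, and stays inside $D$ (at definite distance from $\partial D$). This is impossible whenever $K_i$ is not entirely contained in $D$ --- for instance a complementary closed disk that meets $D$ but protrudes beyond it, or an unbounded component (a half-plane or the complement of a disk). Indeed, any Jordan curve whose interior contains $\overline{D'}\cap K_i$ and which stays inside $D$ has points of the connected set $K_i$ both in its interior and in its exterior, so it must intersect $K_i$; Zoretti's theorem (Theorem~\ref{theorem:zoretti}) only guarantees avoidance of the \emph{compact} set it is applied to, and cannot enclose a proper connected sub-portion of $K_i$ while avoiding $K_i$. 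If instead you apply Zoretti to the clipped compact set $\overline{D'}\cap\bigcup_j K_j$, the resulting curve may pass through the unclipped parts of the $K_j$ just outside $\overline{D'}$, so the spliced curve $\gamma_2$ either meets a large component or leaves $D$, and your subsequent application of Lemma~\ref{lemma:path_omega} no longer yields a curve in $\Omega\cap D$. Your closing remark that ``Zoretti's theorem still supplies Jordan boundaries avoiding $\bigcup_j K_j$'' in the unbounded/protruding case is exactly the false step.

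This is also where the hypothesis that $\Omega$ is a \emph{circle} domain must enter: the statement fails for general domains (a thin complementary rectangle crossing $D$ disconnects $\Omega\cap D$), so no purely topological surrounding argument can handle the large components. The paper's proof goes around a protruding round component not with a closed collar but with an \emph{arc} of a circle concentric to $\partial K_i$ of slightly larger radius, with endpoints on the chord $[z',w']$, chosen inside a slightly smaller concentric disk and, by taking the radius increment small, disjoint from the finitely many other large components (which lie at positive distance from the closed disk $K_i$). Replacing your Zoretti collars by such concentric detour arcs (or some other argument exploiting roundness) is needed to make stage one correct; stage two and the inversion step are fine and coincide with the paper's argument.
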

\begin{proof}
Let $B$ be a ball with $\Omega\cap B\neq \emptyset$ and let $z,w\in \Omega\cap \br B$. We will show that there exists a path $\gamma$ connecting $z,w$ and lying in $\Omega\cap B$, except for the endpoints. This implies condition \ref{llc1} with constant $M=1$.

Let $z',w'\in \Omega\cap B$ be points so that the line segments $[z,z']$ and $[w,w']$ lie in $\Omega\cap \br B$. It suffices to show the claim for $z',w'$. Let $B'$ be a ball such that $z',w'\in B'\subset \br {B'}\subset B$ and let $0<\varepsilon<\dist(\partial B,\partial B')$. The collection of components $D_i$, $i\in I$, of $\R^2\setminus \Omega$ that intersect the line segment $[z',w']$ and satisfy $\diam D_i\geq \varepsilon$, $i\in I$, is finite. For each $i\in I$ we consider an arc $C_i$ of a circle concentric to $\partial D_i$ with slightly larger radius so that $C_i$ has its endpoints on $[z',w']$, $C_i\subset B'$, and $C_i$ does not intersect any component $S$ of $\R^2\setminus \Omega$ with $\diam S\geq \varepsilon$. Moreover, we may assume that $C_i\cap C_j=\emptyset$ for distinct $i,j\in I$. Let $\gamma$ be the path arising by replacing for each $i\in I$ the segment of $[z',w']$ between the endpoints of $C_i$ with $C_i$. By construction, $|\gamma|\subset B'$ and  every component of $\R^2\setminus \Omega$ that intersects $\gamma$ has diameter less than $\varepsilon$. By Lemma \ref{lemma:path_omega} there exists a path $\widetilde \gamma$ in $\Omega$ that connects $z',w'$ and $|\widetilde \gamma|\subset N_{\varepsilon}(|\gamma|)\subset N_{\varepsilon}(B') \subset B$. This completes the proof of the claim.

For condition \ref{llc2}, let $B$ be a ball with $\Omega \setminus B\neq \emptyset$. We apply an inversion so that a point $a\in B\setminus \partial \Omega$ is mapped to $\infty$ and $B$ is mapped to the exterior of a ball $B_1$. Note that the image of $\Omega\setminus \{a\}$ under the inversion is a circle domain $\Omega'\subset \R^2$. By the initial claim, any  two points in $\Omega'\cap \br {B_1}$ can be connected by a path in $\Omega'\cap \br {B_1}$. Thus, any two points in $\Omega\setminus  B$ can be connected by a path in $\Omega\setminus B$.
\end{proof}

\begin{theorem}[Conformal invariance of uniformity]\label{theorem:conformal_inner_uniform}
Let $\Omega,D\subset  \R^2$ be domains that contain a neighborhood of $\infty$ and  $f\colon \Omega\to D$ be a quasiconformal map that extends continuously to $\infty$ so that $f(\infty)=\infty$. If $\Omega$ is an inner uniform domain and $D$ is an {LLC} domain, then $f\colon (\Omega,\rho_\Omega)\to D$ is quasisymmetric and $D$ is a uniform domain, quantitatively. 
\end{theorem}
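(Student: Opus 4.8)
The statement has two conclusions — that $f\colon(\Omega,\rho_\Omega)\to D$ is quasisymmetric, and that $D$ is uniform — and I would prove the first and then deduce the second, the latter being the soft step once Lemma \ref{lemma:qs_uniform_curves} and the geometry of Section \ref{section:inner_geometry} are available. The engine is the interplay between three facts: a quasiconformal map is \emph{locally} quasisymmetric with scale‑independent data; an inner uniform domain has tightly controlled quasihyperbolic geometry, so any two points are joined by a good Harnack chain of balls along an $(A,\rho_\Omega)$‑uniform curve; and an LLC target is rigid enough that such chains cannot collapse under $f$.

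\textbf{Quasisymmetry of $f$.} Since $f$ is quasiconformal (the hypotheses on neighborhoods of $\infty$ making this meaningful up to $\infty$), standard quasiconformal theory yields a distortion function $\eta_0$, depending only on the dilatation of $f$, such that $f$ restricted to any ball $B(x,r)$ with $B(x,2r)\subset\Omega$ is $\eta_0$‑quasisymmetric; since $\rho_\Omega=|\cdot|$ on $B(x,\tfrac12\dist(x,\partial\Omega))$, this is also local $\eta_0$‑quasisymmetry for $\rho_\Omega$. For $a,b\in\Omega$, Theorem \ref{theorem:vaisala} provides an $(A,\rho_\Omega)$‑uniform curve $\gamma$ joining them; $\gamma$ has a deep point $w$ with $\dist(w,\partial\Omega)\gtrsim_A d(\gamma)\ge\rho_\Omega(a,b)$, and along $\gamma$ one forms a Harnack chain of balls $B_i\subset\Omega$ (with doubled balls in $\Omega$, consecutive ones overlapping, radii comparable to $\dist(\cdot,\partial\Omega)$ and changing by a bounded geometric factor per step), so that the quasihyperbolic distance satisfies $k_\Omega(a,b)\lesssim_A 1+\log\bigl(\rho_\Omega(a,b)/\min\{\dist(a,\partial\Omega),\dist(b,\partial\Omega)\}\bigr)$. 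Pushing the chain forward by $f$ and using $\eta_0$‑quasisymmetry on each link, the sets $f(B_i)$ have bounded eccentricity; the LLC property of $D$ then forces consecutive images to overlap genuinely and to have comparable diameters, so that $\{\diam f(B_i)\}$ is a two‑sided geometric sequence and one obtains $|f(a)-f(b)|\asymp_{A,\eta_0,M}\diam f(\widehat B)$ for a definite‑scale ball $\widehat B$ about $w$. This gives the three‑point inequality for $f\colon(\Omega,\rho_\Omega)\to(D,|\cdot|)$; in practice one first obtains $H$‑weak quasisymmetry and upgrades to $\eta$‑quasisymmetry by Tukia--V\"ais\"al\"a, applied to $f^{-1}$ so that the needed doubling is the obvious doubling of $D$ as a subspace of $\R^2$ (or to $f$ directly, after noting that $(\Omega,\rho_\Omega)$ is doubling, which follows from Theorem \ref{theorem:relative_distance} together with inner uniformity). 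The transport of the quasihyperbolic estimate can alternatively be handled by the Gehring--Osgood theorem.

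\textbf{Uniformity of $D$.} Since $D$ is LLC it is LLC$_1$, and the first part of Lemma \ref{lemma:bt_definitions} (in the locally compact, locally connected space $D$) shows any $y_1,y_2\in D$ are joined in $D$ by a path of diameter $\le C(M)|y_1-y_2|$; hence $|y_1-y_2|\le\rho_D(y_1,y_2)\le C(M)|y_1-y_2|$, so the quasisymmetry of $f$ into $(D,|\cdot|)$ is equally quasisymmetry into $(D,\rho_D)$, with distortion $\eta'$ depending on the prior data and $M$. Given $y_1,y_2\in D$, put $x_i=f^{-1}(y_i)$, take an $(A,\rho_\Omega)$‑uniform curve $\gamma$ from $x_1$ to $x_2$ (Theorem \ref{theorem:vaisala}), and apply Lemma \ref{lemma:qs_uniform_curves}: $f\circ\gamma$ is an $(A',\rho_D)$‑uniform curve with $A'=2\eta'(2A)$. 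Then $d(f\circ\gamma)\le A'C(M)|y_1-y_2|$ and $\cig_d(f\circ\gamma,A')\subset D$, i.e.\ $D$ satisfies the diameter version of the uniform condition; by the quantitative equivalence between uniformity and its diameter version for Euclidean domains (V\"ais\"al\"a, \cite{Vaisala:inner}; compare Theorem \ref{theorem:vaisala}), $D$ is uniform, with constant depending only on $A$, the dilatation of $f$, and $M$.

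\textbf{Main obstacle.} The hard part is the globalization: turning the scale‑independent local quasisymmetry of $f$ into a global estimate in the metric $\rho_\Omega$. Two ingredients must be used in an essential way — the inner uniform \emph{curves} of $\Omega$, not mere connectivity, are needed to bound the quasihyperbolic distance (hence the length and thickness of the chains) by the inner‑diameter distance; and the LLC property of $D$ is needed to keep the image chains non‑degenerate and to recover the matching lower bound $|f(a)-f(b)|\gtrsim\diam f(\widehat B)$, where LLC$_2$ is exactly what rules out outward slits or cusps in $D$, for which the theorem fails. Everything else — the reduction in the last paragraph and the deduction of uniformity of $D$ from Lemma \ref{lemma:qs_uniform_curves} — is routine.
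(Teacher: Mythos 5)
Your second half is sound and is essentially the paper's own deduction: LLC$_1$ gives bounded turning of $D$, hence $\rho_D\asymp|\cdot|$, Lemma \ref{lemma:qs_uniform_curves} pushes $(A,\rho_\Omega)$-uniform curves forward, and V\"ais\"al\"a's results convert diameter cigars plus bounded turning into genuine uniformity. The problem is the first half, which is where all the difficulty of Theorem \ref{theorem:conformal_inner_uniform} sits.

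The gap is the step ``the LLC property of $D$ then forces consecutive images to overlap genuinely and to have comparable diameters, so that $\{\diam f(B_i)\}$ is a two-sided geometric sequence and one obtains $|f(a)-f(b)|\asymp\diam f(\widehat B)$.'' Overlap and per-step comparability of consecutive image balls already follow from the scale-invariant local quasisymmetry of $f$ and need no hypothesis on $D$; iterating them along a Harnack chain of length $\approx k_\Omega(a,b)$ yields only constants that degenerate exponentially in the chain length, i.e.\ exactly the Gehring--Osgood quasihyperbolic quasi-isometry you mention as an alternative. That quasi-isometry, combined with the inner-uniform upper bound $k_\Omega(a,b)\lesssim 1+\log\bigl(1+\rho_\Omega(a,b)/\min\{\dist(a,\partial\Omega),\dist(b,\partial\Omega)\}\bigr)$ and the universal lower bound for $k_D$, controls $|f(a)-f(b)|$ from above relative to $\dist(f(a),\partial D)$, but it cannot produce the matching lower bound $|f(a)-f(b)|\gtrsim\diam f(\widehat B)$, and hence not the three-point inequality: a conformal map onto a slit domain is also a quasihyperbolic quasi-isometry, so any argument that only uses chains plus a vague appeal to LLC cannot isolate where LLC is actually used. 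The mechanism that does the work (and that the paper imports wholesale) is a modulus comparison, namely Theorem 6.5 of \cite{Heinonen:John}: broadness/Loewnerness of $(\Omega,\rho_\Omega)$ (from \cite{BonkHeinonenKoskela:gromov_hyperbolic}, Theorem 6.4) gives a lower bound for the modulus of the family of curves joining two continua of bounded relative $\rho_\Omega$-distance; quasiconformality transports this; and LLC of $D$ is used to construct the test continua in the image --- one of diameter $\lesssim|f(x)-f(b)|$ through $f(x),f(b)$ (LLC$_1$), one from $f(a)$ to far away avoiding a definite ball around $f(x)$ (LLC$_2$) --- so that failure of weak quasisymmetry with large constant $H$ forces the connecting curves through a thick round annulus, making the modulus small and yielding a contradiction. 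Your proposal contains no substitute for this modulus step, so the globalization to weak quasisymmetry is unproved; once weak quasisymmetry is in hand, your doubling-plus-Tukia--V\"ais\"al\"a upgrade and the rest of the argument do match the paper.
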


The proof relies on several notions from analysis on metric spaces that we do not define here for the sake of brevity and we direct the reader to the papers cited within the proof. 

\begin{proof}
All statements in the proof are quantitative. Since $D$ satisfies condition \ref{llc1}, we conclude that $D$ has bounded turning, so the metric $\rho_D$ is comparable to the Euclidean metric. Hence, the space $(D,\rho_D)$ satisfies condition \ref{llc2}. Since $\Omega$ is an inner uniform domain, it is \textit{$2$-Loewner space} when equipped with the metric $\rho_{\Omega}$; this is a consequence of Theorem 6.4 and Remark 6.6 in \cite{BonkHeinonenKoskela:gromov_hyperbolic}. Equivalently, in the terminology of \cite{Heinonen:John}, the domain $\Omega$ is \textit{broad}. Theorem 6.5 in \cite{Heinonen:John} implies that a quasiconformal map $f$ as in the statement from a broad domain $\Omega$ onto a domain $(D,\rho_D)$ satisfying condition \ref{llc2} (with the metric $\rho_D$) is \textit{weakly quasisymmetric} in the metrics $\rho_{\Omega}$ and $\rho_D$. Since $\Omega$ is a broad domain, the space $(\Omega,\rho_{\Omega})$ \textit{doubling} \cite{Vaisala:cylindrical}*{Lemma 2.18}. Also, $D$ is a doubling space as a subset of $\R^2$, so $(D,\rho_D)$ is also doubling. A weakly quasisymmetric map between doubling and connected spaces is quasisymmetric \cite{Heinonen:metric}*{Theorem 10.19}. Therefore, the map $f\colon (\Omega,\rho_\Omega)\to (D,\rho_D)$ is quasisymmetric. By Lemma \ref{lemma:qs_uniform_curves} and Theorem \ref{theorem:vaisala}, $D$ is an inner uniform domain. Finally, we note that an inner uniform domain with bounded turning is uniform \cite{Vaisala:inner}*{Theorem 3.9}.
\end{proof}

\begin{remark}\label{remark:invariance_bounded}
If instead we assume that $\Omega$ and $D$ are bounded domains and $f\colon \Omega \to D$ is quasiconformal, the conclusion of Theorem \ref{theorem:conformal_inner_uniform} remains true, but the distortion function of the quasisymmetry and the uniformity constant of $D$ in the conclusion have to depend, in addition, to a constant $M\geq 1$ such that
$$\frac{\diam\Omega}{\dist(z_0,\partial \Omega)} \leq M \quad \textrm{and}\quad \frac{\diam D}{\dist(f(z_0),\partial D)}\leq M,$$
where $z_0$ is some fixed point in $D$. This can be proved exactly as above, by applying Theorem 6.1 from \cite{Heinonen:John} in place of Theorem 6.5.
\end{remark}

\section{Uniformization of inner uniform domains}\label{section:uniformization}

In the entire section we use the topology of $\widehat{\C}$. Subsets of $\C$ are always understood to be equipped with the Euclidean metric.  

\subsection{Carath\'eodory's kernel convergence}
Let $\Omega_n\subset \widehat{\C}$, $n\in \N$, be a sequence of domains and let $z_0\in \widehat{\C}$ be a point with $z_0\in \Omega_n$ for each $n\in \N$. The \textit{$z_0$-kernel} of $\{\Omega_n\}_{n\in \N}$ is the domain $\Omega$ that is the union of all domains $U$ with the property that $z_0\in U$ and  for each compact set $K\subset U$ there exists $N\in \N$ such that $K\subset \Omega_n$ for all $n\geq N$. Note that the $z_0$-kernel could be the empty set. Moreover, if $\Omega\neq\emptyset$, then $z_0\in \Omega$, $\Omega$ is connected, and for each compact set $K\subset \Omega$ there exists $N\in \N$ such that $K\subset \Omega_n$ for all $n\geq N$. We say that the sequence $\{\Omega_n\}_{n\in \N}$ \textit{converges to a domain $\Omega\subset \widehat{\C}$ in the Carath\'eodory sense with base at $z_0$} if $\Omega$ is the $z_0$-kernel of every subsequence of $\{\Omega_n\}_{n\in \N}$. 

We will use the following version of Carath\'eodory's theorem for multiply connected domains. 

\begin{theorem}[\cite{Goluzin:complex}*{Theorem V.5.1, p.~228}]\label{theorem:caratheodory}
For each $n\in \N$ consider domains $\Omega_n,D_n\subset \widehat{\C}$ such that $\infty\in \Omega_n$ and $\infty\in D_n$, and a conformal map $f_n\colon \Omega_n\to D_n$ with Laurent expansion
$$f_n(z)=z+a_{0,n}+ \frac{a_{1,n}}{z}+\frac{a_{2,n}}{z^2}+\dots$$
in a neighborhood of $\infty$. Suppose that $\{\Omega_n\}_{n\in \N}$ converges to a domain $\Omega$ in the Carath\'eodory sense with base at $\infty$. Then $\{f_n\}_{n\in \N}$ converges locally uniformly in $\Omega$ to a conformal map $f$ if and only if $\{D_n\}_{n\in \N}$ converges to a domain $D$ in the Carath\'eodory sense with base at $\infty$. In that case, $f(\Omega)=D$.
\end{theorem}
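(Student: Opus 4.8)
The plan is to run the classical proof of Carath\'eodory's kernel theorem, adjusted to the normalization $f_n(z)=z+a_{0,n}+\cdots$ at $\infty$; since this is precisely the cited result, I only describe the architecture. We may assume $\Omega\neq\widehat\C$: otherwise $\Omega_n=\widehat\C$ for all large $n$, each $f_n$ is a translation, and the statement fails, so the hypotheses tacitly exclude this; under this assumption $D_n\neq\widehat\C$ for large $n$ and $D\neq\widehat\C$ whenever $D$ exists. Two ingredients are needed. \emph{(a) Control near $\infty$.} A Carath\'eodory limit containing $\infty$ contains a fixed punctured neighborhood $\{|z|>R\}\cup\{\infty\}$ of $\infty$, so for large $n$ the $f_n$ are defined there; the area theorem bounds $a_{j,n}$ for $j\ge1$, and the distortion estimate places the omitted continuum $\widehat\C\setminus D_n$ inside $\overline{B(a_{0,n},3R)}$. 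Since $\widehat\C\setminus D_n$ also lies in a fixed ball for large $n$ — because $\infty\in D$ when $D_n\to D$, and because $f_n\to f$ forces $a_{0,n}=\frac1{2\pi i}\oint_{|z|=\rho}\frac{f_n(z)-z}{z}\,dz\to a_0$ when $f_n\to f$ — the numbers $a_{0,n}$ are bounded, hence $\{f_n\}$ and $\{f_n^{-1}\}$ are locally bounded on a fixed neighborhood of $\infty$. \emph{(b) Propagation.} A sequence of univalent maps $g_n$ that is locally bounded near $\infty$ and, for some domain $U\ni\infty$, is eventually defined on every compact subset of $U$, is normal on $U$ (with respect to the spherical metric). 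Indeed, the set of points of $U$ near which $\{g_n\}$ is locally equibounded is open and contains a neighborhood of $\infty$, and it is closed in $U$: at a boundary point $z^*$ one applies the Koebe distortion theorem on a fixed-size disk around a nearby good point $z'$ (such a disk sits in $U$, hence in $\Omega_n$ resp.\ $D_n$ for $n$ large), which transports the equiboundedness and the Cauchy estimate on $g_n'$ from $z'$ to $z^*$ and then to a neighborhood of $z^*$, uniformly in $n$.

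Granting (a) and (b), suppose first that $f_n\to f$ locally uniformly with $f$ conformal; set $D:=f(\Omega)$, a domain containing $\infty$. For a compact $K\subset D$, write $K=f(K')$ with $K'\Subset\Omega$; uniform convergence on a slightly larger compact and the univalence of $f$ let Rouch\'e's theorem conclude $K\subset f_n(\Omega_n)=D_n$ for all large $n$. Hence $D$ lies in the $\infty$-kernel of every subsequence. If some subsequence $\{D_{n_k}\}$ had $\infty$-kernel $D^*\supsetneq D$, choose $w_0\in D^*\setminus D$ and a domain $U$ with $\infty,w_0\in U$ whose compacts are eventually in $D_{n_k}$. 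By (a) and (b) applied to $f_{n_k}^{-1}$ on $U$, a further subsequence converges locally uniformly on $U$ to a conformal $h$ with $h(w)=w-a_0+\cdots$ near $\infty$. Applying Rouch\'e to $h$ shows every compact of $h(U)$ is eventually contained in $\Omega_{n_k}$, so $h(U)$ lies in the $\infty$-kernel of $\{\Omega_{n_k}\}$, which is $\Omega$; thus $h(U)\subset\Omega$. Passing to the limit in $f_{n_k}(f_{n_k}^{-1}(w))=w$ at each $w\in U$ — legitimate because $h(w)\in\Omega$ and $f_{n_k}\to f$ uniformly near $h(w)$ — gives $f\circ h=\mathrm{id}_U$, whence $w_0=f(h(w_0))\in f(\Omega)=D$, a contradiction. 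Therefore $D_n\to D$, and $f(\Omega)=D$ by construction.

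Conversely, suppose $D_n\to D$. By (a) and (b) applied to $f_n$ on $\Omega$, every subsequence of $\{f_n\}$ has a further subsequence converging locally uniformly on $\Omega$ to a conformal $g$ with $g(z)=z+\cdots$ at $\infty$. Rouch\'e gives $g(\Omega)\subset D$ (the $\infty$-kernel of $\{D_{n_k}\}$), and the inverse argument from the previous paragraph — extracting a limit $h$ of $\{f_{n_k}^{-1}\}$ on a domain $U\ni w_0,\infty$ with compacts eventually in $D_{n_k}$, for a point $w_0\in D$, and checking $g\circ h=\mathrm{id}$ — yields $D\subset g(\Omega)$; hence $g(\Omega)=D$. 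To promote this to convergence of the full sequence $\{f_n\}$ one checks that such a limit is unique: if $g_1,g_2\colon\Omega\to D$ are conformal with Laurent expansion $z+a_0+O(1/z)$ at $\infty$, then $g_2\circ g_1^{-1}$ is a conformal automorphism of $D$ of the form $w+c_0+O(1/w)$ at $\infty$, i.e.\ an automorphism fixing the interior point $\infty$ with derivative $1$ there; since $D\neq\widehat\C$, such an automorphism is the identity — by the Schwarz lemma lifted to the universal cover when $D$ is hyperbolic, and by a direct M\"obius computation when $\widehat\C\setminus D$ consists of one or two points. Hence $\{f_n\}$ converges locally uniformly to this unique $f$, and $f(\Omega)=D$.

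The main obstacle is the propagation step (b): a priori a sequence of univalent maps that behaves well near $\infty$ could degenerate or blow up elsewhere in the domain, and excluding this is exactly where the Koebe distortion theorem and the bookkeeping that keeps the relevant disks of definite size inside the domain for all large $n$ are used. Everything else reduces to Rouch\'e's theorem and the definition of the $\infty$-kernel.
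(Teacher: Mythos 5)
The paper does not prove this statement at all — it is quoted verbatim from Goluzin (Theorem V.5.1), so there is no in-paper argument to compare against. Your outline is the standard proof of that classical kernel-convergence theorem (area-theorem/Koebe control of $a_{j,n}$ and of the omitted sets near $\infty$, open-closed propagation of equiboundedness via the distortion theorem, Rouch\'e/Hurwitz combined with the definition of the $\infty$-kernel, and uniqueness of a normalized conformal map onto $D$ via the automorphism fixing $\infty$ with derivative $1$), and it is correct in its essentials, including the sensible exclusion of the degenerate case $\Omega=\widehat{\C}$.
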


The next lemma reveals the relation between Carath\'eodory and Hausdorff convergence. Recall that the Hausdorff distance between two sets $E,F\subset {\C}$, denoted by $d_H(E,F)$, is defined to be the infimum of $r>0$ such that $E$ is contained in the open $r$-neighborhood $N_r(F)$ of $F$ and $F$ is contained in $N_r(E)$ (using the Euclidean metric). The set of all compact subsets of ${\C}$ that are contained in a fixed closed ball is a compact metric space with the Hausdorff distance \cite{BuragoBuragoIvanov:metric}*{Section 7.3.1}.

\begin{lemma}[Carath\'eodory vs Hausdorff convergence]\label{lemma:caratheodory}
Let $\Omega_n\subset \widehat{\C}$, $n\in \N$, be a sequence of domains such that $\infty\in \Omega_n$ for each $n\in \N$. Let $E_n=\widehat{\C}\setminus \Omega_n$, $n\in \N$, and suppose that the sequence $\{E_n\}_{n\in \N}$ converges to a compact set $E\subset \C$ in the Hausdorff sense.  Denote by $\Omega$ the connected component of $\widehat{\C}\setminus E$ that contains $\infty$. Then the $\infty$-kernel of $\{\Omega_n\}_{n\in \N}$ is $\Omega$. In particular, $\{\Omega_n\}_{n\in \N}$ converges to $\Omega$ in the Carath\'eodory sense with base at $\infty$.
\end{lemma}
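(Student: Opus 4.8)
The plan is to show that the $\infty$-kernel of $\{\Omega_n\}$, call it $\widetilde\Omega$, coincides with $\Omega$, the component of $\widehat{\C}\setminus E$ containing $\infty$; then the ``in particular'' follows because Hausdorff convergence passes to subsequences, so every subsequence has the same kernel. I will prove the two inclusions $\widetilde\Omega\subset\Omega$ and $\Omega\subset\widetilde\Omega$ separately.

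For $\widetilde\Omega\subset\Omega$: let $z\in\widetilde\Omega$. By definition of the kernel there is a domain $U$ with $z,\infty\in U$ such that every compact $K\subset U$ lies in $\Omega_n$ for all large $n$. I claim $U\subset\widehat{\C}\setminus E$. Indeed, if some $w\in U\cap E$, pick a small closed ball $K\subset U$ centered at $w$; since $E_n\to E$ in the Hausdorff sense there are points $w_n\in E_n$ with $w_n\to w$, so $w_n\in K\subset\Omega_n=\widehat{\C}\setminus E_n$ for large $n$, a contradiction. Hence $U$ is a connected subset of $\widehat{\C}\setminus E$ containing $\infty$, so $U\subset\Omega$ and therefore $z\in\Omega$.

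For $\Omega\subset\widetilde\Omega$: it suffices to show that $\Omega$ itself is an admissible domain $U$ in the definition of the kernel, i.e. that every compact $K\subset\Omega$ satisfies $K\subset\Omega_n$ for all large $n$. Fix such a $K$. Since $K$ is a compact subset of the open set $\widehat{\C}\setminus E$, we have $\varepsilon_0:=\dist_\sigma(K,E)>0$ (with the convention $\dist_\sigma(K,\emptyset)=\infty$; if $E=\emptyset$ the conclusion is trivial since then $E_n$ is eventually empty). Hausdorff convergence $E_n\to E$ gives, for all large $n$, that $E_n\subset N_{\varepsilon_0/2}(E)$, hence $E_n\cap K=\emptyset$, i.e. $K\subset\widehat{\C}\setminus E_n$. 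But $K$ is connected (we may assume so, or work componentwise) and contains points of $\Omega$ arbitrarily close to $\infty$... more carefully: $K\subset\widehat{\C}\setminus E_n$ need not be contained in the component $\Omega_n$. To fix this I argue as follows. Connect each point of $K$ to $\infty$ within $\Omega$ by a path (possible since $\Omega$ is open and connected); the union of $K$ with finitely many such paths is a connected compact set $K'\subset\Omega$ still at positive distance $\varepsilon_1>0$ from $E$. Then for $n$ large, $K'\subset\widehat{\C}\setminus E_n$, and $K'$ is connected and contains $\infty\in\Omega_n$, so $K'$ lies entirely in the component $\Omega_n$ of $\widehat{\C}\setminus E_n$ containing $\infty$. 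In particular $K\subset\Omega_n$. This shows $\Omega$ is admissible, so $\Omega\subset\widetilde\Omega$.

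The main obstacle is precisely the last point: Hausdorff convergence of the complements only gives that $K$ avoids $E_n$, which a priori allows $K$ to fall into a bounded complementary component; the remedy is to thicken $K$ to a connected compact set containing $\infty$ and still avoiding $E$, using openness and connectedness of $\Omega$, so that connectedness forces membership in the right component $\Omega_n$. Everything else is a routine unwinding of the definitions of kernel and Hausdorff distance.
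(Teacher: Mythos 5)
Your proof is correct and follows essentially the same route as the paper: show that every compact subset of $\Omega$ eventually lies in $\Omega_n$, so that $\Omega$ is an admissible domain for the kernel, and show that the kernel avoids $E$ and hence, being connected and containing $\infty$, is contained in $\Omega$. The one point worth flagging is that the step you identify as the main obstacle is not an obstacle: since $E_n$ is defined as $\widehat{\C}\setminus \Omega_n$, its complement $\widehat{\C}\setminus E_n$ is exactly the connected set $\Omega_n$, so there is no bounded complementary component for $K$ to fall into, and $K\cap E_n=\emptyset$ already yields $K\subset \Omega_n$; the thickening to $K'$ can simply be deleted (this is what the paper does). Incidentally, that detour as literally written would need repair anyway --- if $K$ has infinitely many components, finitely many paths from points of $K$ to $\infty$ need not make the union connected, and one would instead cover $K$ by finitely many closed balls in $\Omega$ and join those by arcs --- but since the step is superfluous, this does not affect the validity of your argument.
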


\begin{proof}
Let $K$ be a compact subset of $\Omega$. There exists $\varepsilon>0$ such that $K\cap N_{\varepsilon}(E)=\emptyset$. Since $E$ is the Hausdorff limit of $\{E_n\}_{n\in \N}$, there exists $N_1\in \N$ such that $E_n\subset N_{\varepsilon}(E)$ for every $n\ge N_1$. Hence, $K\cap E_n=\emptyset$ and  $K\subset \Omega_n$ for every $n\ge N_1$. This implies that the $\infty$-kernel $\{\Omega_n\}_{n\in \N}$ is non-empty and contains $\Omega$. We denote by $U$ the $\infty$-kernel of $\{\Omega_n\}_{n\in \N}$.

Now, we show the reverse inclusion. We claim that $U\cap E=\emptyset$. If not, there exist $z\in E$ and $r>0$ such that $\overline{B}(z,r)\subset U$. By the definition of the $\infty$-kernel, there is $N_2\in \N$ such that $\overline{B}(z,r)\subset \Omega_n$ for every $n\ge N_2$ and hence $ \overline{B}(z,r)\cap E_n=\emptyset$ for every $n\ge N_2$.  This contradicts the assumption that $E$ is the Hausdorff limit of $\{E_{n}\}_{n\in \N}$. Thus, $U\cap E=\emptyset$. Since $U$ is connected, we conclude that it is contained in a component of $\widehat{\C}\setminus E$. Since $\infty\in U$, we have $U\subset \Omega$.
\end{proof}

\subsection{Limits of uniform domains}

We extend the notion of a uniform curve by allowing the endpoints to lie on the boundary of a domain. If $\Omega\subset \C$ is a domain and $A\geq 1$, a curve $\gamma\colon [0,1]\to \br \Omega$ is called an $A$-uniform curve if $\ell(\gamma)\leq A|\gamma(0)-\gamma(1)|$ and $\cig_\ell(\gamma,A)\subset \Omega$. Note that if $\gamma$ is not constant in a neighborhood of $0$ and $1$, then we necessarily have $\gamma((0,1))\subset \Omega$. 

\begin{lemma}[Limits of uniform curves]\label{uniformcurvesconvergence}
Let $\Omega_n\subset {\C}$, $n\in \N$, be a sequence of domains. Let $E_n={\C}\setminus \Omega_n$, $n\in \N$, and suppose that the sequence $\{E_n\}_{n\in \N}$ converges to a compact set $E\subset  \C$ in the Hausdorff sense. For $A\geq 1$, let $\gamma_n\colon [0,1]\to  \br {\Omega_n}$, $n\in \N$, be a sequence of  $A$-uniform curves, parametrized by rescaled arclength, such that $\{\gamma_n(0)\}_{n\in \N}$ and $\{\gamma_n(1)\}_{n\in \N}$ converge to distinct points $z_0$ and $z_1$, respectively. 
Then there exists a component $\Omega$ of ${\C}\setminus E$ and a subsequence of $\{\gamma_n\}_{n\in \N}$ that converges uniformly to an $A$-uniform curve $\gamma\colon [0,1]\to \br \Omega$.
\end{lemma}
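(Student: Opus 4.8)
The plan is to extract a uniformly convergent subsequence of the curves $\gamma_n$ by an Arzel\`a--Ascoli argument and then verify that the limit curve is an $A$-uniform curve with trace in the closure of a single component of $\mathbb{C}\setminus E$. First I would note that since each $\gamma_n$ is $A$-uniform with $\{\gamma_n(0)\}$ and $\{\gamma_n(1)\}$ converging to distinct points $z_0,z_1$, the lengths $\ell(\gamma_n)\le A|\gamma_n(0)-\gamma_n(1)|$ are uniformly bounded above (and bounded below away from $0$, since $\ell(\gamma_n)\ge|\gamma_n(0)-\gamma_n(1)|\to|z_0-z_1|>0$); as the $\gamma_n$ are parametrized by rescaled arclength on $[0,1]$, they are uniformly Lipschitz, hence equicontinuous, and their traces lie in a fixed ball. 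Arzel\`a--Ascoli then gives a subsequence (not relabeled) converging uniformly to a curve $\gamma\colon[0,1]\to\mathbb{C}$ with $\gamma(0)=z_0$, $\gamma(1)=z_1$, and $\ell(\gamma)\le\liminf\ell(\gamma_n)\le A|z_0-z_1|$ by lower semicontinuity of length under uniform convergence. This gives the first defining inequality of an $A$-uniform curve.

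Next I would check the cigar condition $\cig_\ell(\gamma,A)\subset\mathbb{C}\setminus E$. Fix $t\in[0,1]$ and a point $w\in B(\gamma(t),A^{-1}\min\{\ell(\gamma|_{[0,t]}),\ell(\gamma|_{[t,1]})\})$; I want to show $w\notin E$. Since the parametrization is by rescaled arclength, $\ell(\gamma|_{[0,t]})=t\,\ell(\gamma)$ and $\ell(\gamma_n|_{[0,t]})=t\,\ell(\gamma_n)$, and $\ell(\gamma_n)\to\ell(\gamma)$ (the uniform limit of arclength-parametrized curves of convergent length is again arclength-parametrized after rescaling, or one argues directly that $\ell(\gamma_n)\to\ell(\gamma)$ because equality holds in the semicontinuity estimate — this needs a small argument, see below). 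Consequently $\gamma_n(t)\to\gamma(t)$ and $\min\{\ell(\gamma_n|_{[0,t]}),\ell(\gamma_n|_{[t,1]})\}\to\min\{\ell(\gamma|_{[0,t]}),\ell(\gamma|_{[t,1]})\}$, so for all large $n$ the point $w$ lies in $B(\gamma_n(t),A^{-1}\min\{\ell(\gamma_n|_{[0,t]}),\ell(\gamma_n|_{[t,1]})\})\subset\cig_\ell(\gamma_n,A)\subset\Omega_n=\mathbb{C}\setminus E_n$, i.e.\ $w\notin E_n$. If $w\in E$, then by Hausdorff convergence of $E_n\to E$ there are points $w_n\in E_n$ with $w_n\to w$, but in fact we need the stronger fact that $\dist(w,E_n)\to0$ is not enough — rather, since $w$ has a definite distance to $\gamma_n(t)$ strictly less than the cigar radius, the ball $B(\gamma_n(t),A^{-1}\min\{\dots\})$ contains a fixed-radius ball around $w$ for large $n$, and this ball is disjoint from $E_n$; Hausdorff convergence then forces the same fixed-radius ball around $w$ to be disjoint from $E$, contradicting $w\in E$. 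Hence $w\notin E$, proving $\cig_\ell(\gamma,A)\subset\mathbb{C}\setminus E$.

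The remaining point is that the open connected set $\cig_\ell(\gamma,A)$ — which contains $|\gamma|$ except possibly the endpoints — lies in a single component $\Omega$ of $\mathbb{C}\setminus E$; this is immediate since a connected subset of $\mathbb{C}\setminus E$ lies in one component, and then $|\gamma|\subset\overline{\cig_\ell(\gamma,A)}\subset\overline\Omega$ because endpoints of uniform curves lie in the closure. Thus $\gamma\colon[0,1]\to\overline\Omega$ is the desired $A$-uniform curve.

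The main obstacle is the claim that $\ell(\gamma_n)\to\ell(\gamma)$ along the chosen subsequence, which is what makes the reparametrized points $\gamma_n(t)$ converge to $\gamma(t)$ and the cigar radii converge. Lower semicontinuity only gives $\ell(\gamma)\le\liminf\ell(\gamma_n)$; for the reverse one uses that $\ell(\gamma_n)\le A|\gamma_n(0)-\gamma_n(1)|\to A|z_0-z_1|$ together with a compactness/diagonal argument: pass to a further subsequence along which $\ell(\gamma_n)$ converges to some $\mathcal L\le A|z_0-z_1|$, so the $\gamma_n$ are $\mathcal L$-Lipschitz in the limit and the uniform limit $\gamma$ satisfies $\ell(\gamma|_{[0,t]})\le\mathcal L t$ for all $t$; but also $\ell(\gamma|_{[0,t]})+\ell(\gamma|_{[t,1]})\ge\ell(\gamma)\ge\liminf$ can be pushed to show $\ell(\gamma|_{[0,t]})=\mathcal Lt$, i.e.\ $\gamma$ is itself parametrized by rescaled arclength of total length $\mathcal L$, hence $\mathcal L=\ell(\gamma)$. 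Since every subsequence of $\{\ell(\gamma_n)\}$ has a further subsequence converging to $\ell(\gamma)$, the whole (already extracted) sequence converges. I would isolate this as a short preliminary lemma on uniform limits of arclength-parametrized uniform curves, after which the cigar argument above goes through cleanly.
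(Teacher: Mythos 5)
Your overall scheme (Arzel\`a--Ascoli, lower semicontinuity of length, Hausdorff convergence, then a connectedness argument to place the limit curve in the closure of a single component) is the same as the paper's, but the step you isolate as a preliminary lemma is false, and your verification of the cigar condition hinges on it. It is not true that $\ell(\gamma_n)\to\ell(\gamma)$ along the uniformly convergent subsequence, nor that the limit curve is parametrized by rescaled arclength of total length $\mathcal L=\lim_n\ell(\gamma_n)$: length can drop in the limit even for curves satisfying all hypotheses of the lemma. For instance, take $A=2$, $E_n=E=\{2\}$, and let $\gamma_n$ be zigzag curves from $0$ to $1$ of length exactly $2$ with oscillation amplitude tending to $0$, parametrized by rescaled arclength; each is a $2$-uniform curve in $\C\setminus\{2\}$ (the quasiconvexity bound is exactly attained and the cigar condition is trivial since $\dist(\gamma_n(t),E_n)\geq 1$ while the one-sided lengths are at most $1$), the endpoints are $0\neq 1$, and the $\gamma_n$ converge uniformly to the segment $[0,1]$ traversed at constant speed, with $\ell(\gamma)=1<2=\mathcal L$. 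Hence your identity $\ell(\gamma|_{[0,t]})=t\,\ell(\gamma)=\lim_n t\,\ell(\gamma_n)$, and with it the claimed convergence of the cigar radii, fails. Your sketched justification is also internally flawed: ``$\ell(\gamma)\ge\liminf\ell(\gamma_n)$'' reverses the semicontinuity inequality, and the conclusion that $\gamma$ is rescaled-arclength parametrized with total length $\mathcal L$ does not follow (in the example $\gamma$ has constant speed but length $1$, not $\mathcal L$).

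The claim is, however, unnecessary, because the cigar condition for $\gamma$ only needs a one-sided estimate, and the error in your radii goes in the harmless direction (the approximating radii can only be larger). Since all curves are defined on the same interval, $\gamma_{k_n}|_{[0,t]}\to\gamma|_{[0,t]}$ and $\gamma_{k_n}|_{[t,1]}\to\gamma|_{[t,1]}$ uniformly, so lower semicontinuity of length applied to each subcurve gives $\min\{\ell(\gamma|_{[0,t]}),\ell(\gamma|_{[t,1]})\}\leq \liminf_n\min\{\ell(\gamma_{k_n}|_{[0,t]}),\ell(\gamma_{k_n}|_{[t,1]})\}\leq A\liminf_n \dist(\gamma_{k_n}(t),E_{k_n})=A\dist(\gamma(t),E)$, where the last equality uses $\gamma_{k_n}(t)\to\gamma(t)$ and $|\dist(x,E_n)-\dist(x,E)|\leq d_H(E_n,E)$. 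This is exactly the inequality the paper uses, and it yields $\cig_{\ell}(\gamma,A)\subset\C\setminus E$ directly, with no claim about convergence of lengths. With that substitution, the rest of your outline --- $\ell(\gamma)\leq A|z_0-z_1|$, the observation that $\gamma(t)\notin E$ whenever both one-sided lengths are positive, connectedness to select the component $\Omega$, and $|\gamma|\subset\br\Omega$ with $\dist(\gamma(t),E)=\dist(\gamma(t),\partial\Omega)$ --- goes through as you describe and matches the paper's argument.
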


\begin{proof}
For $n\in \N$ we have $\ell(\gamma_n)\leq A |\gamma_n(0)-\gamma_n(1)|$ and
\begin{align}\label{assumptionuniform}
\min\{\ell (\gamma_n|_{[0,t]}), \ell(\gamma_n|_{[t,1]} )\} \leq A \dist(\gamma_n(t),\partial \Omega_n),
\end{align}
for every $t\in [0,1]$. The first condition implies that the curves $\gamma_n$, $n\in \N$, are contained in a bounded region. Since the lengths $\{\ell(\gamma_n)\}_{n\in \N}$ are bounded and each $\gamma_n$ is parametrized by rescaled arclength, by the Arzel\`a--Ascoli theorem \cite{BuragoBuragoIvanov:metric}*{Theorem 2.5.14}, there exists a subsequence $\{\gamma_{k_n}\}_{n\in\N}$ of $\{\gamma_n\}_{n\in \N}$ such that $\{\gamma_{k_n}\}_{n\in\N}$ converges uniformly to a curve $\gamma\colon [0,1]\to \C$. Moreover, for every interval $[s,t]\subset[0,1]$ we have
$$\ell(\gamma|_{[s,t]})\leq \liminf_{n\to\infty}\ell( \gamma_{k_n}|_{[s,t]})\leq A|\gamma(0)-\gamma(1)|.$$
In particular, $\ell(\gamma)\leq A |\gamma(0)-\gamma(1)|$. By \eqref{assumptionuniform}, for every $t\in[0,1]$ we have
\begin{align}\label{uniforminequality}
\min \{\ell (\gamma|_{[0,t]}), \ell(\gamma|_{[t,1]} ) \}&\le \min \{\liminf_{n\to \infty} \ell (\gamma_{k_n}|_{[0,t]}), \liminf_{n\to \infty} \ell (\gamma_{k_n}|_{[t,1]})\} \notag \\
&\le \liminf_{n\to \infty} \min \{\ell (\gamma_{k_n}|_{[0,t]}),\ell (\gamma_{k_n}|_{[t,1]})\} \notag \\
&\le A\liminf_{n\to \infty} \dist(\gamma_{k_n}(t),\partial \Omega_{k_n})=A\liminf_{n\to \infty} \dist(\gamma_{k_n}(t),E_{k_n}) \notag\\
&=A\dist(\gamma(t),E),
\end{align}
where the last equality follows from the facts that the sequence $\{E_n\}_{n\in \N}$ converges to $E$ in the Hausdorff sense and $\{\gamma_{k_n}(t)\}_{n\in\N}$ converges to $\gamma(t)$. We have $\gamma(0)=z_0$ and $\gamma(1)=z_1$. By assumption, $z_1\neq z_0$, so $\gamma$ is a non-constant curve. Let $[t_0,t_1]\subset [0,1]$ be such that $\gamma([0,t_0])=\{z_0\}$, $\gamma([t_1,1])=\{z_1\}$, and $\gamma|_{[t_0,t_1]}$ is not constant in a neighborhood of $t_0$ or $t_1$. By \eqref{uniforminequality} we infer that $\gamma(t)\in {\C}\setminus E$ for every $t\in(t_0,t_1)$. Since $\gamma((t_0,t_1))$ is connected, there exists a component $\Omega$ of ${\C}\setminus E$ such that $\gamma( [0,1])\subset \br \Omega$ and hence $\dist(\gamma(t),E)=\dist(\gamma(t),\partial \Omega)$ for $t\in [0,1]$. In combination with \eqref{uniforminequality}, we obtain that $\gamma\colon [0,1]\to \br \Omega$ is an $A$-uniform curve. 
\end{proof}

\begin{remark}\label{remark:uniform_boundary}
It is a consequence of Lemma \ref{uniformcurvesconvergence} that if $\Omega$ is an $A$-uniform domain then any two points in $\br\Omega$ can be connected by an $A$-uniform curve. 
\end{remark}

\begin{lemma}[Limits of uniform domains]\label{lemma:uniform:convergence}
Let $A\geq 1$ and $\Omega_n\subset {\C}$, $n\in \N$, be a sequence of $A$-uniform domains. Let $E_n={\C}\setminus \Omega_n$, $n\in \N$, and suppose that the sequence $\{E_n\}_{n\in \N}$ converges to a compact set $E\subset  \C$ in the Hausdorff sense.  Then $\Omega={\C}\setminus E$ is an $A$-uniform domain. Moreover, for each component $F$ of $E$ and for each $n\in \N$ there exists a component  $F_n$ of $E_n$ such that the sequence $\{F_n\}_{n\in \N}$ converges in the Hausdorff sense to $F$.
\end{lemma}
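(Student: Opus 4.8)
The plan is to prove the two assertions in order. First, that $\Omega = \C \setminus E$ is an $A$-uniform domain. Fix distinct points $a, b \in \Omega$; we must produce an $A$-uniform curve in $\Omega$ joining them. Since $E$ is the Hausdorff limit of $\{E_n\}_{n \in \N}$, for all large $n$ the points $a, b$ lie in $\Omega_n = \C \setminus E_n$ (the distance from $\{a,b\}$ to $E_n$ stays bounded below), and since each $\Omega_n$ is $A$-uniform there is an $A$-uniform curve $\gamma_n \colon [0,1] \to \Omega_n$ with $\gamma_n(0) = a$, $\gamma_n(1) = b$, which I reparametrize by rescaled arclength. Then $\{\gamma_n(0)\}$ and $\{\gamma_n(1)\}$ converge to the distinct points $a$ and $b$, so Lemma \ref{uniformcurvesconvergence} applies and yields a subsequence converging uniformly to an $A$-uniform curve $\gamma \colon [0,1] \to \br\Omega'$ for some component $\Omega'$ of $\C \setminus E$. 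Since $\gamma$ passes through $a \in \Omega$, and $\Omega$ is the unique component containing $a$, we get $\Omega' = \Omega$; thus $\gamma$ is an $A$-uniform curve in $\br\Omega$ joining $a$ and $b$, and since $a \ne b$ it is nonconstant near both endpoints, so in fact $\gamma((0,1)) \subset \Omega$. This shows $\Omega$ is $A$-uniform. (One should also note $\Omega$ is nonempty: $a$ lies in it for any fixed $a \in \Omega_n$ eventually, e.g.\ any point of positive distance to $\bigcup_n E_n$ near the limit — more carefully, pick any $a$ with $a \notin E$, which exists since $E$ is compact and not all of $\C$.)

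For the second assertion, fix a component $F$ of $E$. The idea is to use Zoretti's theorem (Theorem \ref{theorem:zoretti}) to trap $F$ in arbitrarily thin Jordan regions whose boundaries miss $E$, then transfer this to the $E_n$. Given $\varepsilon > 0$, Zoretti's theorem gives a Jordan region $U$ with $F \subset U$, $\partial U \cap E = \emptyset$, and $\partial U \subset N_\varepsilon(F)$. Since $\partial U$ is a compact set disjoint from $E$ and $E_n \to E$ in the Hausdorff sense, $\partial U \cap E_n = \emptyset$ for all large $n$; hence $E_n \cap U$ is a union of components of $E_n$ lying in $U$. Moreover $F \cap E_n \ne \emptyset$ eventually — because the Hausdorff convergence forces points of $E_n$ within $\varepsilon$ of any point of $F$ — so $E_n \cap U \ne \emptyset$. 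I then need to select, for each large $n$, a single component $F_n^{(\varepsilon)}$ of $E_n$ contained in $U$; one natural choice is the component of $E_n \cap \br U = E_n$ (since $E_n \subset U$ near $F$... actually $E_n \cap \br U$ may include other pieces, so better: the component of $E_n$ that meets a fixed small ball around a fixed point $p \in F$, which lies in $U$ provided $\varepsilon$ is small relative to that ball). Running $\varepsilon = 1/k$ and diagonalizing over $k$ and $n$, together with compactness of the space of compact subsets of a fixed ball under the Hausdorff metric, produces a subsequence along which the chosen components converge in Hausdorff distance to a compact connected set $F' \subset \br U$ for every $\varepsilon$, hence $F' \subset \bigcap_\varepsilon \br{N_\varepsilon(F)} = F$; and since each chosen component meets the fixed ball around $p$, $F'$ is nonempty and contains $p$, and by maximality of $F$ among connected subsets of $E$ (it is a component) together with $F' \subset E$ connected, we get $F' = F$. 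Finally, a standard argument upgrades ``a subsequence of the chosen components converges to $F$'' to ``for each $n$ there is a component $F_n$ of $E_n$ with $F_n \to F$'': for each $n$ let $F_n$ be the component of $E_n$ nearest to $F$ in Hausdorff distance (or meeting the ball around $p$); the subsequential convergence just established, applied to every subsequence, forces $d_H(F_n, F) \to 0$.

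The main obstacle is the bookkeeping in the second part: making the selection of $F_n$ canonical enough that the subsequential limits are forced to equal $F$ (not merely a subcontinuum of $E$ containing part of $F$), and then promoting subsequential convergence to full convergence. The key technical inputs are Zoretti's theorem to produce separating Jordan curves avoiding $E$ and thus eventually avoiding $E_n$, the Hausdorff convergence to control both inclusions ($E_n$ eventually inside thin neighborhoods of $E$, and $E$ inside thin neighborhoods of $E_n$, the latter giving nonemptiness of $F \cap E_n$), and compactness of the hyperspace of compact sets under $d_H$ to extract convergent subsequences of the selected components. The first part, by contrast, is essentially immediate from Lemma \ref{uniformcurvesconvergence} once one checks that the limit curve lands in the correct component.
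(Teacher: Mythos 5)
Your proof of the first assertion is essentially the paper's own argument (connect $a,b$ by $A$-uniform curves in $\Omega_n$ and apply Lemma \ref{uniformcurvesconvergence}); just note that the connectedness of $\C\setminus E$ is itself part of the conclusion, so the phrase ``$\Omega$ is the unique component containing $a$'' is circular as written. The correct bookkeeping is that the limit curve has endpoints off $E$ and, by the cigar inequality, stays in $\C\setminus E$, which yields connectedness and the uniformity condition at once; this is a presentational fix, not a gap.

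The second assertion is where there is a genuine gap, and it is exactly at the point you flagged and then did not resolve. Your construction yields (along a subsequence) a compact connected set $F'\subset E$ meeting $F$, hence $F'\subset F$ by maximality of the component; but the concluding step ``$F'=F$ by maximality'' is not a deduction: maximality in no way prevents $F'$ from being a proper subcontinuum of $F$. Worse, your argument for this part uses only Zoretti's theorem, Hausdorff convergence, and hyperspace compactness, and never the hypothesis that the $\Omega_n$ are $A$-uniform with a \emph{fixed} $A$ --- yet the assertion is false without it. For example, let $E_n$ be the union of the closed squares $[0,1]\times[0,1]$ and $[1+\tfrac{1}{n},2+\tfrac{1}{n}]\times[0,1]$: each $\C\setminus E_n$ is a uniform domain (with constant blowing up like $n$), $E_n$ converges in the Hausdorff sense to $E=[0,2]\times[0,1]$, which is a single component $F=E$, but every component of $E_n$ converges to one of the two halves of $E$, a proper subcontinuum. (For a fixed $A$ this configuration is ruled out by Theorem \ref{theorem:relative_distance}.) So uniformity must enter precisely in the proof that the subsequential limit is all of $F$. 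The paper does this as follows: by the first assertion and the Martio--Sarvas theorem, $F$ is a point or a closed Jordan region; choose $x\in\inter F$ when $F$ is nondegenerate, points $x_n\in E_n$ with $x_n\to x$, and let $F_n$ be the component of $E_n$ containing $x_n$. If a subsequential Hausdorff limit $V\subset F$ of $\{F_n\}$ were a proper subset, then $\partial V$ would contain a point $y\in\inter F$; one finds $y_n\in \partial F_{k_n}$ with $y_n\to y$, joins $y_n$ to a fixed basepoint $x_0\in\Omega$ by $A$-uniform curves in $\br{\Omega_{k_n}}$ (Remark \ref{remark:uniform_boundary}), and Lemma \ref{uniformcurvesconvergence} then produces an $A$-uniform limit curve starting at $y$ and contained in the closure of a component of $\C\setminus E$ --- impossible, since $y\in\inter F$ has a whole neighborhood inside $E$. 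Some argument of this kind, exploiting the uniform constant, is the missing heart of your second part; your selection and subsequence bookkeeping (which also has secondary issues, e.g.\ $\br U$ need not lie in $\br{N_\varepsilon(F)}$ and the chosen component meeting a small ball about $p$ need not be unique) only reduces the problem to exactly this step.
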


The proof relies on a result of Martio--Sarvas \cite{MartioSarvas:uniform}*{Theorem 2.24} (see also Gehring \cite{Gehring:schwarzian}*{Lemma 5}) on the geometry of uniform domains. It implies that each bounded boundary component of a uniform domain in $\C$ is either a point or a Jordan curve (in fact, a quasicircle).  

\begin{proof}
Let $x,y\in {\C}\setminus E$ be distinct points. By Hausdorff convergence, there exists $N_1\in \N$ such that $x,y\notin E_n$ and $x,y\in \Omega_n$ for $n\geq N_1$. By assumption, for each $n\geq N_1$, the domain $\Omega_n$ is $A$-uniform and thus there is an $A$-uniform curve $\gamma_n\colon [0,1]\to {\Omega_n}$ with $\gamma_n(0)=x$ and $\gamma_n(1)=y$. We parametrize $\gamma_n$ by rescaled arclength. By Lemma \ref{uniformcurvesconvergence} there exists a component $U$ of ${\C}\setminus E$ such that a subsequence of $\{\gamma_n\}_{n\geq N_1}$ converges to an $A$-uniform curve $\gamma\colon [0,1]\to \br U$ with $\gamma(0)=x$ and $\gamma(1)=y$. In particular, for $t\in [0,1]$ we have
$$\min \{\ell (\gamma|_{[0,t]}), \ell(\gamma|_{[t,1]} ) \}\le A\dist(\gamma(t),\partial U).$$
Since $x,y\notin E$, we conclude that $\dist(\gamma(t),\partial U)>0$ for all $t\in [0,1]$. Thus, $\gamma([0,1])\subset {\C}\setminus E$. Since the points $x,y\in {\C}\setminus E$ were arbitrary, we conclude that the set $\Omega={\C}\setminus E$ is connected and any pair of points of $\Omega$ can be joined by an $A$-uniform curve in $\Omega$. Thus, $\Omega$ is an $A$-uniform domain. 

Let $F$ be a component of $E$. Since $\Omega={\C}\setminus E$ is a uniform domain, by \cite{MartioSarvas:uniform}*{Theorem 2.24} the set $F$ is a single point or a closed Jordan region. Let $x\in F$ and suppose that $x\in \inter F$ if $F$ is a closed Jordan region.  By assumption, the sequence $\{E_n\}_{n\in \N}$ converges to $E$ in the Hausdorff sense. Thus, there exist points $x_n\in E_n$, $n\in \N$, such that $x_n\to x$ as $n\to \infty$. For each $n\in \N$, let $F_n$ be the component of $E_n$ containing $x_n$. Consider a subsequence $\{F_{k_n}\}_{n\in \N}$ of $\{F_n\}_{n\in \N}$ that converges in the Hausdorff sense to a compact and connected set $V\subset E$. Then $x\in V\cap F$. Since $F$ is a component of $E$, we conclude that $V\subset F$. We claim that $V=F$. Assuming that this is the case, then the original sequence $\{F_n\}_{n\in \N}$ must also converge to $F$, as desired. 

We now prove the claim. If $F$ is a single point there is nothing to prove, so assume that $F$ is a closed Jordan region and $x\in \inter F$. For the sake of contradiction, we assume that $ F\setminus V\neq \emptyset$ and let $z\in F\setminus V$. Since $\partial F$ is a Jordan curve, there exists a curve $\alpha \colon [0,1]\to F$ such that $\alpha(0)=z$ and $\alpha(1)=x$ and $\alpha ((0,1])\subset \inter F$. So, there is a point $y\in \partial V\cap \alpha ((0,1])$ and hence $y\in \partial V\cap \inter F$. Next, we prove that 
\begin{align}\label{lemma:uniform:convergence:claim2}
\textrm{there exists a sequence $y_n\in \partial F_{k_n}$, $n\in \N$, that converges to $y$ as $n\to\infty$.}
\end{align}
Let $\varepsilon>0$. Then $(\C\setminus V)\cap B(y,\varepsilon)\neq \emptyset$. Since $\{F_{k_n}\}_{n\in \N}$ converges in the Hausdorff sense to $V$, there is $N_2\in \N$ such that $F_{k_n}\cap B(y,\varepsilon)\neq \emptyset$ and $({\C}\setminus F_{k_n})\cap B(y,\varepsilon)\neq \emptyset$ for every $n\ge N_2$. By the connectedness of $B(y,\varepsilon)$, we have $\partial F_{k_n}\cap B(y,\varepsilon)\neq \emptyset$ for $n\geq N_2$. By varying $\varepsilon$, one can obtain a sequence $\{y_n\}_{n\in \N}$ as in \eqref{lemma:uniform:convergence:claim2}.

Let $x_0\in \Omega={\C}\setminus E$, so there exists $N_3\in \N$ such that $x_0\in \Omega_{k_n}$ for all $n\geq N_3$. For each $n\geq N_3$, there is an $A$-uniform curve $\gamma_n\colon [0,1]\to  \br {\Omega_{k_n}}$ joining $y_n$ to $x_0$; see Remark \ref{remark:uniform_boundary}. By Lemma \ref{uniformcurvesconvergence}, a subsequence of $\{\gamma_n\}_{n\in \N}$ converges to an $A$-uniform curve $\gamma\colon [0,1]\to \br \Omega$ joining $y$ to $x_0$. However $y\in \inter F$ and thus $\gamma([0,1])$ is not contained in $\br \Omega$, which is a contradiction. So, $V=F$ and the proof is complete.
\end{proof}

\begin{corollary}[Limits of uniform circle domains]\label{corollary:circle}
Let $A\geq 1$ and $D_n\subset \widehat{\C}$, $n\in \N$, be a sequence of domains such that $\infty\in D_n$ and $D_n\cap \C$ is an $A$-uniform circle domain for each $n\in \N$.  Suppose that $\{D_n\}_{n\in \N}$ converges to a domain $D$ in the Carath\'eodory sense with base at $\infty$. Then $D\cap \C$ is an $A$-uniform circle domain.
\end{corollary}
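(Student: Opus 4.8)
The plan is to pass from Carathéodory convergence of the domains $D_n$ to Hausdorff convergence of their complements and then invoke Lemma \ref{lemma:uniform:convergence}, which already handles the limit of uniform domains, together with a topological argument showing that the limit domain is again a circle domain. First I would observe that the complements $E_n = \widehat{\C}\setminus D_n$ are compact subsets of a fixed closed ball in $\C$ (since $\infty\in D_n$ and, after a harmless normalization using the Laurent expansion as in Theorem \ref{theorem:caratheodory}, one may assume the $D_n$ contain a fixed neighborhood of $\infty$; alternatively one argues that Carathéodory convergence with base at $\infty$ forces the $E_n$ to eventually lie in a fixed ball). By the compactness of the hyperspace of compact subsets of a closed ball under the Hausdorff metric, after passing to a subsequence we may assume $E_n\to E$ in the Hausdorff sense for some compact $E\subset \C$. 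By Lemma \ref{lemma:caratheodory}, the $\infty$-kernel of $\{\Omega_n\}$ with $\Omega_n = D_n\cap \C$ is exactly the component $\Omega$ of $\C\setminus E$ containing $\infty$; since $D_n\to D$ in the Carathéodory sense, this forces $D\cap \C = \Omega = \C\setminus E$ (here I use that the limit is independent of the subsequence, so in fact the full sequence $E_n$ converges to $E$).

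Next I would apply Lemma \ref{lemma:uniform:convergence} directly: since each $\Omega_n = D_n\cap \C$ is an $A$-uniform domain and $E_n\to E$ in the Hausdorff sense, the lemma gives that $\Omega = \C\setminus E = D\cap\C$ is an $A$-uniform domain, and moreover that each component $F$ of $E$ is the Hausdorff limit of a sequence of components $F_n$ of $E_n$. It remains to show that $D\cap\C$ is a circle domain, i.e.\ that each component $F$ of $E$ is a point or a closed round disk. Here is where the circle hypothesis on the $D_n$ enters: each $F_n$, being a complementary component of the circle domain $D_n\cap\C$, is either a single point or a closed round disk $\overline{B}(c_n, r_n)$. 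If infinitely many $F_n$ are points, the Hausdorff limit $F$ is a point. Otherwise, passing to a subsequence, $F_n = \overline{B}(c_n,r_n)$; since the $F_n$ lie in a fixed ball, the centers $c_n$ are bounded and the radii $r_n$ are bounded, so after a further subsequence $c_n\to c$ and $r_n\to r\ge 0$, and then $F_n\to \overline{B}(c,r)$ in the Hausdorff sense (a routine check), which by uniqueness of Hausdorff limits equals $F$. Thus $F = \overline{B}(c,r)$, a point if $r=0$ and a closed disk otherwise. Hence every component of $E$ is a point or a closed round disk, so $D\cap\C$ is a circle domain, and combined with the previous paragraph it is an $A$-uniform circle domain, as claimed.

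The only genuinely delicate point is justifying that the $E_n$ eventually lie in a common ball so that Hausdorff precompactness applies, and that one may upgrade subsequential Hausdorff convergence to convergence of the full sequence; both follow from the definition of Carathéodory convergence with base at $\infty$ (which already builds in subsequential stability) together with Lemma \ref{lemma:caratheodory}. The rest is the structural input of Lemma \ref{lemma:uniform:convergence} and the elementary observation that a Hausdorff limit of round disks (or points) is a round disk (or point). I expect the boundedness/normalization step to be the main obstacle to write cleanly; everything else is a direct assembly of the preceding lemmas.
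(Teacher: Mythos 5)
Your argument is correct and essentially the paper's own proof: extract a Hausdorff-convergent subsequence of the complements (bounded because Carath\'eodory convergence with base at $\infty$ places a fixed neighborhood of $\infty$ inside all but finitely many $D_n$), apply Lemma \ref{lemma:uniform:convergence} to get $A$-uniformity and the fact that each component of the limit set is a Hausdorff limit of components of the $E_n$ (hence a point or a closed round disk), and identify $D$ via Lemma \ref{lemma:caratheodory}. The Laurent-expansion ``normalization'' and the upgrade to full-sequence Hausdorff convergence are unnecessary, since a single convergent subsequence suffices, exactly as in the paper.
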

\begin{proof}
By the convergence of $\{D_n\}_{n\in \N}$ to $D$ with base at $\infty$, there exists a neighborhood of $\infty$ that is contained in $D_n$ for all $n\in \N$. Therefore the sets $E_n=\widehat{\C}\setminus D_n$, $n\in \N$, are contained in a ball $B(0,R)$ for all $n\in \N$. By passing to a subsequence we assume that $\{E_n\}_{n\in \N}$ converges to a compact set $E\subset {\C}$ in the Hausdorff sense.  By Lemma \ref{lemma:uniform:convergence}, ${\C}\setminus E$ is an $A$-uniform domain. Moreover, each component of $E$ arises as a Hausdorff limit of a sequence of components $F_n$ of $E_n$, $n\in \N$. Therefore, ${\C}\setminus E$ is a circle domain. Finally, by Lemma \ref{lemma:caratheodory} we have $D=\widehat{\C}\setminus E$.
\end{proof}

\subsection{Uniformization by circle domains}\label{section:uniformization_euclidean}

\begin{proof}[Proof of Theorem \ref{theorem:uniformization_euclidean}]
If $U\subset \C$ is a domain containing a neighborhood of $\infty$, we denote by $\widehat{U}$ the domain $U\cup \{\infty\}$ in $\widehat{\C}$.

Suppose that $\Omega$ is an inner $A$-uniform domain for some $A\geq 1$. By Corollary \ref{corollary:approximation} there exists a sequence of finitely connected inner $C_1(A)$-uniform domains $\Omega_n$, $n\in \N$, such that $\Omega\subset \Omega_{n+1}\subset \Omega_{n}$ for all $n\in \N$, and the set $(\bigcap_{n=1}^\infty \Omega_n) \setminus \Omega$ is totally disconnected. In particular, if $V$ is the $\infty$-kernel of $\{\widehat \Omega_n\}_{n\in \N}$, then $\bigcap_{n=1}^\infty \widehat \Omega_n\supset V\supset \widehat \Omega$ and $V\setminus \widehat\Omega$ is a totally disconnected closed subset of $V$.  Moreover, since the sequence of domains $\{\widehat \Omega_n\}_{n\in \N}$ is decreasing, $V$ is also the $\infty$-kernel of every subsequence of that sequence.

By Koebe's theorem \cite{Koebe:FiniteUniformization} (see also \cite{Conway:complex2}*{Theorem 15.7.9}), for each $n\in \N$ there exists a (unique) conformal map $f_n$ from $\Omega_n$ onto a finitely connected circle domain $D_n\subset {\C}$ such that $f_n$ satisfies the normalization
\begin{align}\label{lemma:normalization}
f_n(z)= z + \frac{a_{1,n}}{z}+\frac{a_{2,n}}{z^2}+\dots\quad \textrm{in a neighborhood of $\infty$}.
\end{align}
By Theorem \ref{theorem:conformal_inner_uniform} and Lemma \ref{lemma:circle_llc}, $D_n$ is a $C_2(A)$-uniform circle domain for each $n\in \N$. Note that $f_n$ extends to $\infty$ and $f_n(\infty)=\infty$, $n\in \N$. The family of conformal maps $\{f_n|_V\}_{n\in \N}$ that satisfy the normalization in \eqref{lemma:normalization} is normal and compact; see \cite{Schiff:normal_families}*{Section 5.1, pp.~165--167}.  After passing to a subsequence,  we may assume that $\{f_n|_V\}_{n\in \N}$ converges locally uniformly in $V$ to a conformal map $f$ on $V$. By Theorem \ref{theorem:caratheodory}, the domains $\widehat D_n=f(\widehat \Omega_n)$, $n\in \N$, converge in the Carath\'eodory sense to a domain $\widehat D$ with base at $\infty$ such that $f(V)=\widehat D$. By Corollary \ref{corollary:circle}, the domain $D=\widehat D\cap \C$ is a $C_2(A)$-uniform circle domain.

We claim that $f(\Omega)$ is a circle domain. Let $E=f(V\setminus \widehat \Omega)$ and note that $E$ is a totally disconnected closed subset of $\widehat D$, because $f$ is a homeomorphism.  Note that $f(\widehat \Omega)=\widehat D\setminus E$. In order to show that $f( \Omega)$ is a circle domain, it suffices to show that the components of $\widehat{\C}\setminus f(\widehat \Omega)$ are precisely the components of $\widehat{\C}\setminus \widehat D$ and the components of $E$. The argument is purely topological. Let $F$ be a component of $\widehat{\C}\setminus f(\widehat \Omega)= (\widehat{\C}\setminus \widehat D)\cup E$. Suppose that $F\cap E\neq \emptyset$ and let $x\in F\cap E$. By Zoretti's theorem (Theorem \ref{theorem:zoretti}), since $x$ is a component of $E\subset \widehat{D}$, there exists a Jordan region $W\subset \br W\subset  \widehat D$ such that $x\in W$ and $\partial W\cap E=\emptyset$. In particular, $\partial W\subset f(\widehat \Omega)$. By the connectedness of $F$ we must have $F\subset W\subset \widehat D$. We conclude that $F\cap (\widehat{\C}\setminus \widehat D)=\emptyset$, so $F\subset E$. Since $E$ is totally disconnected, we have $F=\{x\}$. Next, suppose that $F\cap E=\emptyset$, so $F\subset \widehat{\C}\setminus \widehat D$.  Then $F$ is contained in a component of $\widehat{\C}\setminus \widehat D$, but each such component is contained in a component of $\widehat{\C}\setminus f(\widehat \Omega)$. Thus, $F$ must coincide with a component of $\widehat{\C}\setminus \widehat D$, as desired.

Now, we discuss the uniqueness of $f$. Suppose that there exists another conformal map $g$ from $\Omega$ onto a circle domain that contains a neighborhood of $\infty$ so that $g$ extends continuously to a map that fixes $\infty$. Then $g\circ f^{-1}$ is a conformal map from the circle domain $f(\Omega)$ onto another circle domain in $\C$ and $g\circ f^{-1}$ extends to a map that fixes $\infty$. Since $f(\Omega)$ is a uniform domain, it is also a {John domain} (i.e., any two points of the domain can be joined by a curve $\gamma$ so that $\cig_{\ell}(\gamma,A')$ is contained in the domain for some uniform constant $A'\geq 1$). By \cite{NtalampekosYounsi:rigidity}*{Corollary 1.7}, John circle domains are {conformally rigid} (as defined in the Introduction). In particular, $g\circ f^{-1}$ is the restriction of a conformal automorphism of $\C$.
\end{proof}

\begin{proof}[Proof of Theorem \ref{theorem:qs_characterization}]
All statements in the proof are quantitative. Suppose that $f$ is a quasisymmetry from $(\Omega,\rho_{\Omega})$ onto a uniform domain $D\subset \C$. Since uniform domains have bounded turning, $\rho_D$ is comparable to the Euclidean metric. Thus, $f\colon (\Omega,\rho_\Omega)\to (D,\rho_D)$ is quasisymmetric. This implies that $f^{-1}\colon (D,\rho_D)\to (\Omega,\rho_{\Omega})$ is also quasisymmetric \cite{Heinonen:metric}*{Proposition 10.6}.  Since $D$ is uniform, it is also inner uniform and by Theorem \ref{theorem:vaisala} there exists $A\geq 1$ such that any two points can be connected by an $(A,\rho_D)$-uniform curve.  By Lemma \ref{lemma:qs_uniform_curves}, any two points of $\Omega$ can be connected by a $(C(A),\rho_{\Omega})$-uniform curve. Theorem \ref{theorem:vaisala} implies that $\Omega$ is an inner uniform domain, as desired. 

Conversely, suppose that $\Omega$ is an inner uniform domain that  contains a neighborhood of $\infty$. By Theorem \ref{theorem:uniformization_euclidean}, there exists a conformal map $f$ from $\Omega$ onto a uniform circle domain $D$ that contains a neighborhood of $\infty$, and $f$ extends to $\infty$ so that $f(\infty)=\infty$. By Lemma \ref{lemma:circle_llc}, $D$ is $1$-LLC. Theorem \ref{theorem:conformal_inner_uniform} implies that $f\colon (\Omega,\rho_\Omega)\to D$ is quasisymmetric. 
\end{proof}

We also include a version of Theorem \ref{theorem:uniformization_euclidean} for bounded domains. 

\begin{theorem}\label{theorem:bounded_uniformization}
Let $\Omega\subset \C$ be a bounded inner uniform domain. Then there exists a conformal map $f$ from $\Omega$ onto a bounded circle domain $D$. Moreover, the conformal map $f$ is unique up to postcomposition with M\"obius transformations and the circle domain $D$ is a uniform domain, quantitatively. 
\end{theorem}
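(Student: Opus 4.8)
The plan is to run the proof of Theorem~\ref{theorem:uniformization_euclidean} essentially verbatim, with the point $\infty$ replaced by a finite base point $z_0\in\Omega$, the Laurent normalization at $\infty$ replaced by the classical normalization at $z_0$, and Theorem~\ref{theorem:conformal_inner_uniform} replaced by its bounded-domain counterpart, Remark~\ref{remark:invariance_bounded}. Say $\Omega$ is inner $A$-uniform, fix $z_0\in\Omega$, and put $\delta_0=\dist(z_0,\partial\Omega)$; choose $R_0$ with $\Omega\subset B(0,R_0)$. By Corollary~\ref{corollary:approximation} there is a decreasing sequence of finitely connected inner $C_1(A)$-uniform domains $\Omega_n$ with $\Omega\subset\Omega_{n+1}\subset\Omega_n$, each non-degenerate component of $\R^2\setminus\Omega$ eventually a component of $\R^2\setminus\Omega_n$, and $(\bigcap_n\Omega_n)\setminus\Omega$ totally disconnected. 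The unbounded complementary component of $\Omega$ has infinite diameter, hence is a component of $\R^2\setminus\Omega_n$ for every $n$, so $\Omega_n\subset B(0,R_0)$; moreover $\partial\Omega_n\subset\R^2\setminus\Omega$ gives $\dist(z_0,\partial\Omega_n)\ge\delta_0$. Thus $\diam\Omega_n/\dist(z_0,\partial\Omega_n)\le M_1$ for all $n$, with $M_1$ depending only on $\Omega$. Let $V$ be the $z_0$-kernel of $\{\Omega_n\}$; since the sequence decreases, $V$ is the $z_0$-kernel of every subsequence, $\Omega\subset V\subset\bigcap_n\Omega_n$, $V\setminus\Omega$ is a totally disconnected closed subset of $V$, and $V$ is bounded.

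Next, by Koebe's theorem for finitely connected domains, composed with a Möbius transformation, I would take a conformal map $f_n\colon\Omega_n\to D_n$ onto a bounded circle domain with $f_n(z_0)=z_0$ and $f_n'(z_0)=1$ (this normalization can be arranged while keeping $D_n$ bounded, using the two-parameter family of Möbius transformations fixing $z_0$ with unit derivative there); by Lemma~\ref{lemma:circle_llc} each $D_n$ is $1$-LLC. The crucial quantitative point is that $\diam D_n/\dist(f_n(z_0),\partial D_n)\le M_2$ with $M_2=M_2(A,M_1)$ independent of $n$: the lower bound $\dist(f_n(z_0),\partial D_n)\ge\delta_0/4$ is Koebe's $1/4$-theorem applied to $f_n$ on the inscribed disk $B(z_0,\delta_0)\subset\Omega_n$, while the upper bound on $\diam D_n$ comes from the inner $C_1(A)$-uniformity of $\Omega_n$ together with $M_1$, via the broad/Loewner estimates underlying Remark~\ref{remark:invariance_bounded} applied at scale $\delta_0$. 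Granting this, Remark~\ref{remark:invariance_bounded} shows $f_n$ is quasisymmetric as a map $(\Omega_n,\rho_{\Omega_n})\to D_n$ and $D_n$ is an $A'$-uniform circle domain with $A'=A'(A,M_1)$ independent of $n$. Now $\{f_n|_V\}$ is a normal family (univalent maps on the bounded domain $V$, normalized at $z_0$, hence locally uniformly bounded by Koebe's growth theorem), so after passing to a subsequence $f_n|_V\to f$ locally uniformly, with $f$ univalent by Hurwitz's theorem; by the classical Carath\'eodory kernel theorem (the finite-base-point version of Theorem~\ref{theorem:caratheodory}), $D_n\to D:=f(V)$ in the Carath\'eodory sense with base at $z_0$. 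Applying Lemma~\ref{lemma:caratheodory} and Lemma~\ref{lemma:uniform:convergence} in $\widehat\C$ to the compact sets $\widehat\C\setminus D_n$ --- here the uniform constant $A'$ is exactly what is needed for Lemma~\ref{lemma:uniform:convergence} --- one gets, as in Corollary~\ref{corollary:circle}, that $D$ is a uniform circle domain, in particular bounded. The remainder is identical to the proof of Theorem~\ref{theorem:uniformization_euclidean}: with $E=f(V\setminus\Omega)$ a totally disconnected closed subset of $D$ and $f(\Omega)=D\setminus E$, a Zoretti's theorem argument identifies the components of $\widehat\C\setminus f(\Omega)$ as those of $\widehat\C\setminus D$ together with the points of $E$, so $f(\Omega)$ is a circle domain; applying Remark~\ref{remark:invariance_bounded} to the conformal map $f\colon\Omega\to f(\Omega)$ (a $1$-LLC domain by Lemma~\ref{lemma:circle_llc}) shows $f(\Omega)$ is a uniform domain, quantitatively; and if $g\colon\Omega\to D'$ is another conformal map onto a circle domain, then $g\circ f^{-1}$ is a conformal map between circle domains, so, since $f(\Omega)$ is uniform hence John, \cite{NtalampekosYounsi:rigidity}*{Corollary~1.7} forces $g\circ f^{-1}$ to be the restriction of a Möbius transformation.

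The main obstacle is the quantitative point flagged above. In Theorem~\ref{theorem:uniformization_euclidean} the uniform control of the intermediate circle domains is automatic from the scale-invariant Theorem~\ref{theorem:conformal_inner_uniform}, whereas here one must feed the a priori bound $\diam\Omega_n/\dist(z_0,\partial\Omega_n)\le M_1$ into the scale-dependent estimates behind Remark~\ref{remark:invariance_bounded} and verify that all resulting constants are independent of $n$. An alternative is to reduce the bounded case directly to Theorem~\ref{theorem:uniformization_euclidean} by inverting $\Omega$ about a point $x_0\in\Omega$ with $\overline B(x_0,r)\subset\Omega$: the image $\Omega^*$ contains a neighborhood of $\infty$, and one checks --- the bounded analogue of conformal invariance of inner uniformity --- that $\Omega^*$ is inner uniform quantitatively in terms of $A$ and $\diam\Omega/r$ (the inversion is bi-Lipschitz off a fixed ball and maps the inscribed disk to the exterior of a disk); one then applies Theorem~\ref{theorem:uniformization_euclidean} to $\Omega^*$ and post-composes with a carefully chosen Möbius transformation to return to a bounded circle domain, using Remark~\ref{remark:invariance_bounded} once more to see that uniformity survives.
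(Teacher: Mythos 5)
Your outline coincides with the paper's: approximate $\Omega$ by finitely connected inner uniform domains via Corollary \ref{corollary:approximation}, uniformize each by Koebe, get uniform control of the circle domains $D_n$ so that a normal-family/Carath\'eodory-kernel limit exists, pass to the limit as in Corollary \ref{corollary:circle}, run the Zoretti argument to see $f(\Omega)$ is a circle domain, and conclude rigidity from \cite{NtalampekosYounsi:rigidity}. But the step you explicitly defer --- ``Granting this'' --- is exactly the content of the paper's proof, and as it stands this is a genuine gap. With your normalization $f_n(z_0)=z_0$, $f_n'(z_0)=1$, the Koebe $1/4$-theorem does give $\dist(f_n(z_0),\partial D_n)\geq \delta_0/4$, but the needed companion bound $\diam D_n\leq C(A,M_1)\,\delta_0$ does not follow from ``the broad/Loewner estimates underlying Remark \ref{remark:invariance_bounded} applied at scale $\delta_0$'': Remark \ref{remark:invariance_bounded} takes the bound $\diam D_n/\dist(f_n(z_0),\partial D_n)\leq M$ as a \emph{hypothesis}, so invoking it here is circular. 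The paper proves precisely this estimate, in the equivalent normalization $D_n\subset\D$, $f_n(z_0)=0$ (where $\diam D_n\leq 2$ is automatic and the issue becomes a lower bound for $r_0'=\dist(0,\partial D_n)$): one uses Zoretti's theorem to produce a Jordan curve $J_n\subset\Omega$ separating the outer complementary component from $E=\br B(z_0,r_0/2)$ with $f_n(J_n)\subset N_{r_0'}(\partial\D)$, the $2$-Loewner property of $(\Omega_n,\rho_{\Omega_n})$ to bound $\mod\Gamma(J_n,E;\Omega_n)$ from below by $C(A)$, conformal invariance of modulus, Koebe distortion to confine $f_n(E)\subset\br B(0,8r_0')$, and the modulus of the round annulus between $\partial B(0,8r_0')$ and $\partial B(0,1-r_0')$ to force $r_0'\geq c(A)>0$. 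Without this (or an equivalent) argument your proof of the uniform bound feeding both the normality of $\{f_n\}$ after rescaling and the uniformity of $D_n$ is missing.

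A secondary point: you fix an arbitrary $z_0\in\Omega$ and accept that $M_1=\diam\Omega/\dist(z_0,\partial\Omega)$ ``depends only on $\Omega$.'' Then all your constants depend on $M_1$, so the conclusion is not quantitative in the sense claimed by the theorem (dependence only on the inner uniformity constant $A$). The paper removes this dependence by choosing $z_0$ on an inner $A$-uniform curve joining two nearly diametrical points of $\Omega$, which forces $\dist(z_0,\partial\Omega)\geq\frac{1}{4A}\diam\Omega$, hence $M_1\leq 4A$; you should incorporate this choice. Your alternative route via inversion about an interior point is plausible in outline, but it hides the same work (quantitative inner uniformity of the inverted domain, and control of the M\"obius transformation returning to a bounded circle domain), so it does not dispense with a quantitative estimate of the above type either.
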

\begin{proof}
The same proof scheme as above applies with appropriate adaptations to bounded domains, so we only provide a sketch. Let $z_0\in \Omega$ be a point to be determined. By Corollary \ref{corollary:approximation} the domain $\Omega$ can be approximated by finitely connected domains $\Omega_n\supset \Omega$, $n\in \N$, such that the unbounded component $S_0$ of $\R^2\setminus \Omega$ is also a component of $\R^2\setminus \Omega_n$, so $\diam \Omega=\diam \Omega_n$, $n\in \N$.  For each $n\in \N$ there exists a conformal map $f_n$ from $\Omega_n$ onto a circle domain $D_n$ such that $D_n\subset \D$, $\partial \D$ is a component of $\partial D_n$ corresponding to $\partial S_0$, $f_n(z_0)=0$, and $f_n'(z_0)>0$. Our task is to show that the domains $D_n$ are uniform domains, quantitatively. Assuming that, one can follow the rest of the proof of Theorem \ref{theorem:uniformization_euclidean} with appropriate adaptations.

We fix $n\in \N$. In the previous proof the fact that $D_n$ is uniform was provided by Theorem \ref{theorem:conformal_inner_uniform}.  Since the domains $\Omega_n,D_n$ are bounded, by Remark \ref{remark:invariance_bounded} the uniformity constant of $D_n$ depends on the inner uniformly constant of $\Omega$ and on a constant $M\geq 1$ such that
\begin{align}\label{theorem:bounded_uniformization_m}
\frac{\diam\Omega_n}{\dist(z_0,\partial \Omega_n)}\leq \frac{\diam\Omega}{\dist(z_0,\partial \Omega)}  \leq M \quad \textrm{and}\quad \frac{2}{\dist(0,\partial D_n)}\leq M.
\end{align}
Suppose that $\Omega$ is inner $A$-uniform for some $A\geq 1$. Let $x_1,x_2\in \Omega$ such that $\diam \Omega\leq 2|x_1-x_2|$. Consider an inner $A$-uniform curve $\gamma$ connecting $x_1,x_2$. Condition \eqref{uniform:cigar} (using the Euclidean metric) implies that there exists a point $z_0$ on $\gamma$ such that 
$$\dist(z_0,\partial \Omega)\geq \frac{1}{2A} \ell(\gamma) \geq \frac{1}{4A}\diam \Omega.$$
This implies that the first inequality in \eqref{theorem:bounded_uniformization_m} holds with $M\geq 4A$. Let $r_0'=\dist(0,\partial D_n)$. We will show that $r_0'$ is bounded from below, depending only on $A$. This will complete the proof that the domain $D_n$ is uniform, quantitatively. 

Let $r_0=\dist(z_0,\partial \Omega)$. By Zoretti's theorem (Theorem \ref{theorem:zoretti}) there exists a Jordan curve $J_n\subset \Omega\subset \Omega_n$ that separates $S_0$ from $E=\br B(z_0,r_0/2)$ and $f_n(J_n)\subset N_{r_0'}(\partial \D)$. In particular, $\diam J_n \geq \diam E$. Also, $\dist_{\rho_{\Omega_n}}(J_n, E) \leq \diam \Omega_n=\diam \Omega \leq 4Ar_0$. Therefore,
\begin{align*}
\frac{\dist_{\rho_{\Omega_n}}(J_n,E)}{\min\{\diam J_n,\diam E\}}  \leq 4A.
\end{align*}
Since $\Omega_n$ is inner uniform, the space $(\Omega_n,\rho_{\Omega_n})$ is $2$-Loewner, quantitatively; see  \cite{BonkHeinonenKoskela:gromov_hyperbolic}*{Theorem 6.4} and the preceding discussion. Thus, the $2$-modulus $\mod \Gamma(J_n,E;\Omega_n)$ of the family of curves connecting $J_n$ and $E$ in $\Omega_n$ is uniformly bounded from below; see \cite{Heinonen:metric}*{Section 7.3} for the definition of modulus. By the conformal invariance of modulus,
$$\mod \Gamma(f_n(J_n),f_n(E); D_n) \geq C(A).$$ 
By Koebe's distortion theorem \cite{Pommerenke:conformal}*{Theorem 1.3}, $f_n(E)\subset \br B(0,8r_0')$. Without loss of generality, $r_0'<1/9$, so the ball $\br B(0,8r_0')$ is disjoint from $\partial B(0,1-r_0')$. Any curve connecting $f_n(J_n)$ and $f_n(E)$ in $D_n$ has a subcurve in the family $\Gamma$ of curves in the plane connecting $\partial B(0,8r_0')$ and $\partial B(0,1-r_0')$. By \cite{Heinonen:metric}*{Example 7.14}, 
$$2\pi \left(\log \frac{1-r_0'}{8r_0'}\right)^{-1}=\mod \Gamma \geq \mod \Gamma(f_n(J_n),f_n(E); D_n) \geq C(A).$$
Thus, $r_0'$ is uniformly bounded from below, as desired. 
\end{proof}

\begin{remark}\label{remark:qs_bounded}
Theorem \ref{theorem:qs_characterization} also extends to bounded domains as follows. If $\Omega$ is a bounded inner uniform domain, by Theorem \ref{theorem:bounded_uniformization} there exists a conformal map $f$ from $\Omega$ onto a bounded uniform circle domain $D$.  The fact that $f\colon (\Omega,\rho_\Omega)\to D$ is quasisymmetric follows from Remark \ref{remark:invariance_bounded}. The dependence of the quasisymmetric distortion function solely on the inner uniformity constant of $\Omega$ can be shown as in the proof of Theorem \ref{theorem:bounded_uniformization}.
\end{remark}

\section{Uniformization of Gromov hyperbolic domains}\label{section:uniformization_gh}
\subsection{Conformal invariance of hyperbolicity}

Recall that the spherical metric $\sigma$ on $\widehat \C$ is given by
\begin{align*}
\sigma(z,w)= \inf_{\gamma}\int_{\gamma} \frac{2\,|dz|}{1+|z|^2}
\end{align*}
where $z,w\in \widehat{\C}$ and the infimum is taken over all rectifiable curves $\gamma$ in $\widehat{\C}$ that connect $z$ and $w$. A convenient way to estimate the spherical metric is through the chordal metric $\chi$, defined by 
$$\chi(z,w)= \frac{2|z-w|}{\sqrt{1+|z|^2} \sqrt{1+|w|^2}} \quad \textrm{and}\quad \chi({z,\infty})=\frac{2}{\sqrt{1+|z|^2}},$$
where $z,w\neq\infty$. We have $\chi\leq \sigma \leq \frac{\pi}{2} \chi$ and if $e$ denotes the Euclidean metric, then $\chi\leq \sigma\leq 2e$. In what follows we use subscripts to denote the metric that is used in the various metric notions. For example, a Euclidean open ball is denoted by $B_e(z,r)$ and the chordal diameter of a set $E$ is denoted by $\diam_{\chi}E$. 

A map $f\colon (X,d_X)\to (Y,d_Y)$ between metric spaces is \textit{bi-Lipschitz} if there exists a constant $L\geq 1$ such that 
$$L^{-1}d_X(x,y)\leq d_Y(f(x),f(y))\leq Ld_X(x,y)$$
for every $x,y\in X$. In this case we say that $f$ is $L$-bi-Lipschitz. Gromov hyperbolicity is conformally invariant. Specifically, by \cite{GehringOsgood:uniform}*{Theorem 3}, conformal maps are {bi-Lipschitz} in the \textit{Euclidean} quasihyperbolic metric. In turn, bi-Lipschitz maps preserve hyperbolicity, quantitatively \cite{GhysHarpe:gromov}*{Theorem 5.2.12, p.~88}. Here, the Euclidean quasihyperbolic metric in a domain $\Omega\subsetneq \C$ is defined by using the Euclidean distance to the boundary of a domain rather than the spherical one, and integrating against the Euclidean length element. 

In order to distinguish between the various quasihyperbolic metrics, we use superscripts: $k^e_{\Omega}$, $k^{\sigma}_{\Omega}$ denote the Euclidean and spherical quasihyperbolic metrics in a domain $\Omega\subset \C$, respectively. 

Since we have defined hyperbolicity using the spherical quasihyperbolic metric, the above results do not imply that conformal images of hyperbolic spaces are hyperbolic in a quantitative fashion. The next result provides some quantitative dependence of the constants that is sufficient for our purposes.

\begin{theorem}\label{theorem:conformal_bilip}
Let $a>0$ and  $U,V\subset \widehat{\C}$ be domains with $\diam_{\sigma}(\widehat{\C}\setminus U)\geq a$ and $\diam_{\sigma} (\widehat{\C}\setminus V)\geq a$. Then every conformal map $f\colon U\to V$ is $L(a)$-bi-Lipschitz in the spherical quasihyperbolic metrics. In particular, if $U$ is $\delta$-hyperbolic for some $\delta>0$, then $V$ is $\delta'(a,\delta)$-hyperbolic.
\end{theorem}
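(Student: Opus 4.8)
The plan is to compare the spherical quasihyperbolic metric $k^{\sigma}_{\Omega}$ with the Euclidean one $k^e_{\Omega}$ on domains $\Omega$ whose complement is spherically large, and then invoke the cited results of Gehring--Osgood and of Ghys--de la Harpe. First I would normalize. Since $\widehat{\C}\setminus U$ is compact, $\diam_{\sigma}(\widehat{\C}\setminus U)$ is attained at a pair $p_1,p_2$, and composing with a rotation $\phi_U$ of $\widehat{\C}$ (a M\"obius transformation that is a spherical isometry, hence preserves $\sigma$-lengths and the function $\dist_{\sigma}(\cdot,\partial(\cdot))$, and therefore preserves $k^{\sigma}$) I may assume $\phi_U(p_1)=\infty$. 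Then $\Omega\coloneqq\phi_U(U)\subsetneq\C$ and $q\coloneqq\phi_U(p_2)\in\C\setminus\Omega$ satisfies $\sigma(q,\infty)\ge a$, which by $\sigma\le\frac{\pi}{2}\chi$ forces $|q|\le\pi/a$. Doing the same for $V$, the normalized map $\widetilde f=\phi_V\circ f\circ\phi_U^{-1}$ is a conformal map between domains $\Omega,D\subsetneq\C$, each omitting $\infty$ and each containing a finite complementary point within Euclidean distance $\pi/a$ of the origin, and it suffices to bound $\widetilde f$ in the spherical quasihyperbolic metrics of $\Omega$ and $D$.

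The heart of the argument is the pointwise comparison of the metric densities: for $\Omega\subsetneq\C$ with $\infty\in\widehat{\C}\setminus\Omega$ and some $q\in\C\setminus\Omega$ with $|q|\le c$, I claim that
\[
\dist_{\sigma}(z,\widehat{\C}\setminus\Omega)\;\asymp\; \frac{\dist(z,\partial\Omega)}{1+|z|^2}\qquad(z\in\Omega),
\]
with constants depending only on $c$. Since the spherical length element is $2(1+|z|^2)^{-1}|dz|$, this says exactly that the integrands defining $k^{\sigma}_{\Omega}$ and $k^e_{\Omega}$ are comparable, hence so are the lengths of curves, hence so are the two metrics. Using $\chi\le\sigma\le\frac{\pi}{2}\chi$ together with the elementary inequality $\sqrt{1+|w|^2}\le\sqrt{1+|z|^2}+|z-w|$, I would split into two regimes. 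In the regime $\dist(z,\partial\Omega)\le\sqrt{1+|z|^2}$ the comparison follows by testing the nearest Euclidean boundary point (or $\infty$), since for such a point $w$ one has $\sqrt{1+|w|^2}\asymp\sqrt{1+|z|^2}$; no hypothesis on $c$ is used here. In the regime $\dist(z,\partial\Omega)>\sqrt{1+|z|^2}$ the complementary point $q$ enters: $\dist(z,\partial\Omega)\le|z-q|\le\sqrt{1+|z|^2}+c$, so $\dist(z,\partial\Omega)\asymp_c\sqrt{1+|z|^2}$, while a direct estimate using the same elementary inequality gives $\dist_{\sigma}(z,\widehat{\C}\setminus\Omega)\asymp(1+|z|^2)^{-1/2}$; the two sides of the claimed relation then agree up to a constant $C(c)$.

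Finally I would assemble the pieces. The density comparison gives that the identity is $L_1(a)$-bi-Lipschitz from $(\Omega,k^{\sigma}_{\Omega})$ to $(\Omega,k^e_{\Omega})$, and likewise for $D$. By \cite{GehringOsgood:uniform}*{Theorem~3} the conformal map $\widetilde f$ is $L_2$-bi-Lipschitz in the Euclidean quasihyperbolic metrics with a universal constant $L_2$ (indeed $L_2=4$, via the Koebe distortion theorem: $\tfrac14\le \dist(\widetilde f(z),\partial D)/\big(|\widetilde f'(z)|\dist(z,\partial\Omega)\big)\le 4$). Composing the three maps, $\widetilde f$, and hence $f$, is $L(a)$-bi-Lipschitz in the spherical quasihyperbolic metrics. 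Since $(\Omega,k^{\sigma}_{\Omega})$ and $(D,k^{\sigma}_D)$ are geodesic spaces and $f$ is a bi-Lipschitz homeomorphism between them, \cite{GhysHarpe:gromov}*{Theorem~5.2.12} shows that $\delta$-hyperbolicity of $U$ implies $\delta'(a,\delta)$-hyperbolicity of $V$. I expect the main obstacle to be the density comparison in the far-from-the-boundary regime: one must genuinely use the normalization to produce a complementary point at controlled Euclidean distance and thereby prevent $\dist_{\sigma}(z,\widehat{\C}\setminus\Omega)$ from being governed only by the point at infinity; the near-the-boundary regime, by contrast, is an unconditional change-of-density computation.
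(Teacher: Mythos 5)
Your proposal is correct and follows essentially the same route as the paper: normalize by spherical isometries to planar domains whose complement contains $\infty$ and a finite point at controlled Euclidean distance from the origin, prove that the spherical and Euclidean quasihyperbolic densities are comparable with constant depending only on that distance (this is exactly the paper's Lemma \ref{lemma:comparable}), and then combine Gehring--Osgood with Ghys--de la Harpe. One minor imprecision: in your first regime the nearest Euclidean boundary point $w$ need not satisfy $\sqrt{1+|w|^2}\asymp\sqrt{1+|z|^2}$ (take $|z|$ large and $w$ near $0$ with $|z-w|$ just below $\sqrt{1+|z|^2}$), but your parenthetical fallback to testing $\infty$ together with the Lipschitz inequality $\sqrt{1+|w|^2}\le\sqrt{1+|z|^2}+|z-w|$ applied to all complementary points does yield the two-sided bound, matching the paper's case analysis.
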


Note that the dependence on $a$ in the first part of the theorem is necessary. Indeed, consider the map $f(z)=rz$ on $\D$, where $r>0$. Then on the domain $B_r=B_e(0,r)$, upon estimating the spherical metric by the chordal metric, we have
$$k_{B_r}^{\sigma}(0,r/2) = \int_{0}^{r/2} \frac{2\, dt}{\sigma(t,r)(1+t^2)}  \geq \frac{2}{\pi} \int_0^{r/2} \frac{\sqrt{1+r^2} \, dt}{(r-t) \sqrt{1+t^2}}\geq \frac{2}{\pi} \mathrm{arcsinh}(r/2).$$
Hence, $k_{B_r}^{\sigma}(0,r/2)$ cannot be comparable to $k_{\D}^{\sigma}(0,1/2)$ with uniform constants. 

The proof of the theorem relies on an inequality between the Euclidean and spherical quasihyperbolic metrics. 

\begin{lemma}\label{lemma:comparable}
Let $\Omega\subset \widehat{\C}$ be a domain such that $\infty\notin \Omega$ and $\partial \Omega$ contains at least two points. Then
$$(\pi \sqrt{2})^{-1} k_{\Omega}^e\leq k_{\Omega}^{\sigma}\leq 3(2+D) k_{\Omega}^{e}, \quad \textrm{where} \quad D=\dist_e(0,\widehat{\C}\setminus \Omega).$$
\end{lemma}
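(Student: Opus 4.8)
The goal is to compare the integrands defining $k_\Omega^e$ and $k_\Omega^\sigma$. Both are obtained by integrating a conformal density against the respective arclength element; writing $k_\Omega^e(x,y)=\inf_\gamma\int_\gamma \dist_e(z,\partial\Omega)^{-1}|dz|$ and $k_\Omega^\sigma(x,y)=\inf_\gamma\int_\gamma \dist_\sigma(z,\partial\Omega)^{-1}\,\frac{2|dz|}{1+|z|^2}$, and observing that the infima are over the same family of curves, it suffices to prove the pointwise density comparison
$$(\pi\sqrt2)^{-1}\,\frac{1}{\dist_e(z,\partial\Omega)} \le \frac{1}{\dist_\sigma(z,\partial\Omega)}\cdot\frac{2}{1+|z|^2} \le 3(2+D)\,\frac{1}{\dist_e(z,\partial\Omega)}$$
for every $z\in\Omega$. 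Rearranging, this amounts to showing
$$\frac{1}{3(2+D)}\,\dist_e(z,\partial\Omega)\ \le\ \frac{1+|z|^2}{2}\,\dist_\sigma(z,\partial\Omega)\ \le\ \pi\sqrt2\,\dist_e(z,\partial\Omega),$$
i.e.\ that $\dist_\sigma(z,\partial\Omega)$ is comparable to $\frac{2}{1+|z|^2}\dist_e(z,\partial\Omega)$ with the stated constants. This is the single estimate that must be established, and it is where all the work lies.

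First I would fix $z\in\Omega$ and let $w\in\partial\Omega$ be a Euclidean-closest boundary point, so $|z-w|=\dist_e(z,\partial\Omega)=:d$. For the \emph{lower} bound on $\dist_\sigma(z,\partial\Omega)$: using $\chi\le\sigma$ together with the fact that $\partial\Omega$ contains at least two points (so there is a boundary point at chordal distance $\gtrsim 1$ from any fixed point, if needed) — actually the cleaner route is to use $\sigma\ge\chi$ and, for the point realizing $\dist_\chi(z,\partial\Omega)$, relate $\chi(z,w')$ to $|z-w'|$ via $\chi(z,w')=\frac{2|z-w'|}{\sqrt{1+|z|^2}\sqrt{1+|w'|^2}}$; since $w'$ is chordally-closest we have $|w'|\le |z|+2\dist_e(z,\partial\Omega)$ roughly, but more robustly $\sqrt{1+|w'|^2}\le \sqrt{1+|z|^2}+$ (something controlled by $d$ and $D$), giving $\dist_\sigma(z,\partial\Omega)\ge \chi(z,w')\ge c\,\frac{2d}{1+|z|^2}$ once one bounds $\sqrt{1+|w'|^2}$ in terms of $\sqrt{1+|z|^2}$. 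The role of $D=\dist_e(0,\widehat\C\setminus\Omega)$ is precisely to give an \emph{a priori} bound $\dist_e(z,\partial\Omega)\le |z|+D$, hence $|w|\le 2|z|+D$ and $1+|w|^2\lesssim (2+D)^2(1+|z|^2)$; this is what produces the factor $3(2+D)$ in the right-hand inequality of the lemma. For the \emph{upper} bound on $\dist_\sigma(z,\partial\Omega)$, use $\sigma\le\frac\pi2\chi$ and $\chi(z,w)\le \frac{2|z-w|}{\sqrt{1+|z|^2}}$ (discarding $\sqrt{1+|w|^2}\ge1$), so $\dist_\sigma(z,\partial\Omega)\le\sigma(z,w)\le\frac{\pi}{2}\cdot\frac{2d}{\sqrt{1+|z|^2}}$; comparing with $\frac{2d}{1+|z|^2}$ costs a factor $\frac{\pi}{4}\sqrt{1+|z|^2}$, which is \emph{not} bounded — so instead one should chordally compare to the closest boundary point and keep the $\sqrt{1+|w|^2}$ factor, using $|w|\ge |z|-d$ controlled again through $D$. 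Working this through carefully yields the constant $\pi\sqrt2$.

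The main obstacle, then, is the bookkeeping in the two chordal-versus-Euclidean comparisons: one must be careful to retain the $\sqrt{1+|w|^2}$ factor (not discard it) and to control it by $\sqrt{1+|z|^2}$ using the hypothesis via $D$, and separately to handle the regime where $|z|$ is large (where $\dist_e(z,\partial\Omega)$ can be as large as $\sim|z|+D$ and the naive bound $\sigma\le 2e$ is wasteful) versus $|z|$ bounded. The hypotheses $\infty\notin\Omega$ and $\#\partial\Omega\ge2$ enter to guarantee $\dist_e(z,\partial\Omega)<\infty$ and that $D<\infty$ so the constants are finite; the two-point condition also rules out degenerate cases where $\dist_\sigma(z,\partial\Omega)$ could behave pathologically. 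Once the pointwise density comparison is in hand, the passage to the quasihyperbolic metrics is immediate since, as noted, both are infima of path integrals of these densities over the identical curve family, and a pointwise inequality between nonnegative densities integrates to the same inequality between the induced path metrics.
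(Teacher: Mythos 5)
Your reduction is the same as the paper's: since both quasihyperbolic metrics are path integrals of conformal densities over the identical family of curves, the lemma follows from the pointwise comparison of $\rho(z)\coloneqq(1+|z|^2)\dist_{\sigma}(z,\partial\Omega)$ with $\dist_e(z,\partial\Omega)$, namely $3^{-1}(2+D)^{-1}\dist_e(z,\partial\Omega)\le \tfrac12\rho(z)$ and $\rho(z)\le 2\pi\sqrt{2}\,\dist_e(z,\partial\Omega)$. However, these two estimates are the entire content of the lemma, and the mechanism you propose for the first one fails. You take the chordally closest boundary point $w'$, use $|z-w'|\ge \dist_e(z,\partial\Omega)$, and then want $\sqrt{1+|w'|^2}\lesssim(2+D)\sqrt{1+|z|^2}$ via $|w'|\le 2|z|+D$. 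That modulus bound is valid for the \emph{Euclidean}-closest boundary point, but not for the chordally closest one: for $\Omega=\C\setminus\{0\}$ (so $D=0$) and $|z|$ large, the chordally nearest point of $\widehat\C\setminus\Omega$ is $\infty$, and in general $|w'|$ can be arbitrarily large compared to $|z|$ regardless of $D$. In that regime the desired inequality still holds, but only because $|z-w'|$ is then comparable to $|w'|$ rather than merely $\ge\dist_e(z,\partial\Omega)$; repairing your argument forces exactly the case analysis the paper carries out (for $|w'|\ge 2|z|$ or $w'=\infty$ one gets $\rho(z)\gtrsim|z|$, respectively $\gtrsim\sqrt{1+|z|^2}$, and then compares $|z|$ with $\dist_e(z,\partial\Omega)$ through $\dist_e(z,\partial\Omega)\le|z|+D$ and the dichotomy $|z|\gtrless D/2$; for $|w'|<2|z|$ one gets $\rho(z)\gtrsim|z-w'|\ge\dist_e(z,\partial\Omega)$ directly).

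There is a parallel gap in the other direction. Bounding $\rho(z)$ above via the Euclidean-closest point $w$, keeping $\sqrt{1+|w|^2}$ and using $|w|\ge|z|-d$ with $d=\dist_e(z,\partial\Omega)$, only works when $|w|\gtrsim|z|$ or $|z|$ is bounded; when $|z|$ is large and $|w|$ is small (again $\C\setminus\{0\}$) it yields $\rho(z)\lesssim d\,|z|$, off by an unbounded factor. You correctly flag this regime as the obstacle, but the missing ingredient is not supplied: one must use that $\infty\in\widehat\C\setminus\Omega$, so $\dist_{\sigma}(z,\partial\Omega)\le\sigma(z,\infty)\le\pi(1+|z|^2)^{-1/2}$, hence $\rho(z)\le\pi\sqrt{1+|z|^2}\le 2\pi\sqrt2\,d$ in the case $|w|<|z|/2$, $|z|\ge1$ (note also that $D$ plays no role in this direction, contrary to your sketch). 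So the skeleton and the rearranged pointwise inequalities are right, but the two distance comparisons are not established, and the one concrete step you do propose for the $(2+D)$ bound is false as stated.
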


The inequality is stated with different constants in \cite{HerronLindquist:quasihyperbolic}*{Fact 2.9} and is obtained from some general estimates on metric space inversions. Here we give a self-contained elementary proof with improved constants.

\begin{proof}
Let $z\in \Omega$ and $w\in \partial \Omega$ such that $\chi(z,w)=\dist_{\chi}(z,\partial \Omega)$. Since $\sigma \geq \chi$,
\begin{align}\label{lemma:qh:upper}
\rho(z)\coloneqq (1+|z|^2)\dist_{\sigma}(z,\partial \Omega)\geq
\begin{cases} \frac{2|z-w|\sqrt{1+|z|^2}}{\sqrt{1+|w|^2}}, & w\neq \infty \vspace{0.3em} \\ 2\sqrt{1+|z|^2}, & w=\infty\end{cases}.
\end{align}
If $\infty>|w|\geq 2|z|$, then the monotonicity of the function $x\mapsto \frac{x-|z|}{\sqrt{1+x^2}}$ implies that
\begin{align}\label{lemma:qh:upper:w>2z}
\frac{2|z-w|\sqrt{1+|z|^2}}{\sqrt{1+|w|^2}} \geq \frac{2(|w|-|z|)\sqrt{1+|z|^2}}{\sqrt{1+|w|^2}} \geq \frac{2|z|\sqrt{1+|z|^2}}{\sqrt{1+4|z|^2}}\geq |z|.
\end{align}
If $|w|<2|z|$, then
\begin{align}\label{lemma:qh:upper:w<2z}
\frac{2|z-w|\sqrt{1+|z|^2}}{\sqrt{1+|w|^2}}\geq \frac{2|z-w|\sqrt{1+|z|^2}}{\sqrt{1+4|z|^2}} \geq |z-w|\geq \dist_e(z,\partial \Omega).
\end{align}

We set $D=\dist_e(0,\partial \Omega)$. If $|z|\geq D/2$, then $|z|\geq 3^{-1}\dist_e(z,\partial \Omega)$.
If $w\neq \infty$, then \eqref{lemma:qh:upper}, \eqref{lemma:qh:upper:w>2z}, and \eqref{lemma:qh:upper:w<2z} give $\rho(z)\geq 3^{-1}\dist_e(z,\partial \Omega)$. If $w=\infty$, then \eqref{lemma:qh:upper} gives $\rho(z)\geq 2|z|\geq \frac{2}{3} \dist_e(z,\partial \Omega) \geq  3^{-1}\dist_e(z,\partial \Omega)$. 

If $|z|<D/2$, then $\dist_e(z,\partial \Omega)\leq 3D/2$. Since $w\in \partial \Omega$, we have $|w|\geq D>2|z|$ in the case that $w\neq \infty$. Thus,
\begin{align*}
\frac{2|z-w|\sqrt{1+|z|^2}}{\sqrt{1+|w|^2}} &\geq \frac{2(|w|-|z|)}{\sqrt{1+|w|^2}} \geq \frac{2 (D-|z|)}{\sqrt{1+D^2}}\geq \frac{ D}{\sqrt{1+D^2}}\geq \frac{2}{3}\frac{\dist_e(z,\partial \Omega)}{\sqrt{1+D^2}}.
\end{align*}
We now have $\rho(z)\geq \frac{2}{3} (1+D^2)^{-1/2}  \dist_e(z,\partial \Omega) \geq \frac{2}{3}(1+D)^{-1}\dist_e(z,\partial \Omega)$. If $w=\infty$, then by \eqref{lemma:qh:upper} we have $\rho(z) \geq 2 \geq \frac{4}{3}D^{-1} \dist_e(z,\partial \Omega)\geq \frac{2}{3}(1+D)^{-1}\dist_e(z,\partial \Omega)$. Combining all above cases, we obtain 
$$2^{-1}\rho(z) \geq  \min\{ 3^{-1} (1+D)^{-1}, 6^{-1}\}\dist_e(z,\partial \Omega)\geq 3^{-1}(2+D)^{-1} \dist_e(z,\partial \Omega).$$
Therefore, $k_{\Omega}^{\sigma}\leq 3(2+D)k_{\Omega}^e$. We have completed the proof of the upper bound.

For the lower bound, let $w\in \partial \Omega\cap \C$ such that $|z-w|=\dist_e(z,\partial \Omega)$. Then 
\begin{align}\label{lemma:comparable:lower}
\rho(z) \leq \pi \frac{|z-w|\sqrt{1+|z|^2}}{\sqrt{1+|w|^2}} \quad \textrm{and}\quad \rho(z) \leq (1+|z|^2) \sigma(z,\infty)\leq \pi\sqrt{1+|z|^2}.
\end{align}
If $|w|\geq |z|/2$, this gives $\rho(z)\leq 2\pi|z-w|=2\pi \dist_e(z,\partial \Omega)$. Suppose that $|w|<|z|/2$ and $|z|<1$. Then we obtain $\rho(z) \leq  \pi\sqrt{2}|z-w| =\pi \sqrt{2} \dist_e(z,\partial \Omega)$. Finally, suppose that $|w|<|z|/2$ and $|z|\geq 1$. By the second inequality in \eqref{lemma:comparable:lower}, we have
\begin{align*}
\rho(z) \leq \pi \sqrt{2} |z|\leq 2\pi\sqrt{2}|z-w|= 2\pi \sqrt{2} \dist_e(z,\partial \Omega).
\end{align*}
In all cases, we obtain $(\pi \sqrt{2})^{-1}k_{\Omega}^e\leq k_{\Omega}^\sigma$.
\end{proof}

\begin{proof}[Proof of Theorem \ref{theorem:conformal_bilip}]
Let $f\colon U\to V$ be a conformal map. We consider isometries $S,T$ of $\widehat{\C}$ such that $\infty \in \widehat \C\setminus  S(U)$ and $\infty \in \widehat \C\setminus  T(V)$. Define $g= T\circ f \circ S^{-1} \colon S(U)\to T(V)$, which is a conformal map between domains in $\C$. By \cite{GehringOsgood:uniform}*{Theorem 3}, $g$ is bi-Lipschitz in the Euclidean quasihyperbolic metric. That is, there exists a uniform constant $L\geq 1$ such that
\begin{align}\label{theorem:conformal_invariance:qi}
L^{-1}k^e_{S(U)}(z,w) \leq k^e_{T(V)}(g(z),g(w))\leq Lk^e_{S(U)}(z,w)
\end{align}
for all $z,w\in S(U)$. By assumption, $\diam_{\sigma}(\widehat\C\setminus T(V))\geq a$, so the set $\widehat \C\setminus  T(V)$ is not contained in $B_{\sigma}(\infty, a/2)$. For each point $z\in \widehat{\C}\setminus T(V)$ with $\sigma(z,\infty)\geq a/2$, we have $\sigma(0,z)\leq \pi-a/2$. Thus,
$$\pi-a/2\geq \sigma(0,z)=\int_0^{|z|} \frac{2dt}{1+t^2}=2\arctan |z|,$$
which implies that  
$$\dist_e(0, \widehat\C\setminus T(V)) \leq |z|\leq  \tan (\pi/2-a/4).$$
The same estimates apply to $S(U)$. Lemma \ref{lemma:comparable} and  \eqref{theorem:conformal_invariance:qi} yield that $g$ is $L(a)$-bi-Lipschitz in the spherical quasihyperbolic metric. Since $S$ and $T$ are spherical isometries, the map $f$ has the same property. The preservation of hyperbolicity as in the last statement of the theorem follows from the fact that $f\colon (U,k_U^{\sigma})\to (V,k_V^{\sigma})$ is bi-Lipschitz, combined with \cite{GhysHarpe:gromov}*{Theorem 5.2.12, p.~88}.
\end{proof}

\begin{lemma}\label{lemma:normal}
	For each $a\geq 1$ there exists $b\geq 1$ such that the following statement is true. Let $U\subset \widehat{\C}$ be a domain with $\infty\in U$, $\widehat \C\setminus U\subset \br B_e(0,a)$, and $\diam_{e}(\widehat \C\setminus U)\geq a^{-1}$. If $f\colon U\to \widehat{\C}$ is a conformal embedding with $f(\infty)=\infty$ and
\begin{align}\label{lemma:normal:normalization}
f(z)= z + \frac{a_1}{z}+\frac{a_2}{z^2}+\dots\quad \textrm{in a neighborhood of $\infty$},
\end{align}
then $\widehat \C\setminus f(U)\subset \br B_e(0,b)$ and $\diam_{e}(\widehat {\C}\setminus f(U))\geq b^{-1}$. 
\end{lemma}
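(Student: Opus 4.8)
The statement asserts a two-sided area-type normalization bound for conformal embeddings fixing $\infty$ with the given Laurent normalization, under the a priori control $\widehat{\C}\setminus U\subset \br B_e(0,a)$ and $\diam_e(\widehat{\C}\setminus U)\ge a^{-1}$. The natural tool is the area theorem for univalent functions with a simple pole at $\infty$, together with compactness/normal families. I will argue by contradiction using normal families, which automatically yields the claimed uniform $b=b(a)$ without an explicit computation.

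\textbf{Plan of proof.} Two assertions must be established for the compact set $E:=\widehat{\C}\setminus f(U)$, namely the inclusion $E\subset\br B_e(0,b)$ and the lower bound $\diam_e E\geq b^{-1}$. I expect the first to be routine and explicit via the area theorem, and the second, which is the heart of the matter, to follow from a normal-families argument together with Carath\'eodory kernel convergence. For the upper bound, note that since $\widehat{\C}\setminus U\subset\br B_e(0,a)$ the set $\{|z|>a\}\cup\{\infty\}$ lies in $U$, so $f$ is univalent there with the expansion \eqref{lemma:normal:normalization}. Rescaling by $h(w):=a^{-1}f(aw)$ gives a univalent map on $\{|w|>1\}$ of the form $h(w)=w+b_1w^{-1}+b_2w^{-2}+\cdots$ with vanishing constant term; by the classical area theorem (see, e.g., \cite{Goluzin:complex}) one has $\sum_{k\geq1}k|b_k|^2\leq1$, and the omitted continuum $\widehat{\C}\setminus h(\{|w|>1\})$ is contained in $\br B_e(0,2)$. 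Unscaling, $\widehat{\C}\setminus f(\{|z|>a\})\subset\br B_e(0,2a)$, whence $E\subset\br B_e(0,2a)$; moreover $|a_1|\leq a^2$.

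For the lower bound I would argue by contradiction: if no admissible $b$ existed, then for the fixed $a$ there would be domains $U_n$ and conformal embeddings $f_n$ satisfying all the hypotheses with $\diam_e(\widehat{\C}\setminus f_n(U_n))\to0$. By the upper bound the sets $\widehat{\C}\setminus f_n(U_n)$ all lie in $\br B_e(0,2a)\subset\C$, so after passing to a subsequence they converge in the Hausdorff sense to a single point $\{p\}$, and by Lemma \ref{lemma:caratheodory} the image domains $f_n(U_n)$ converge in the Carath\'eodory sense with base at $\infty$ to $\widehat{\C}\setminus\{p\}$. On the other hand the sets $\widehat{\C}\setminus U_n$ lie in $\br B_e(0,a)$ and, after a further subsequence, converge in the Hausdorff sense to a compact $K$ with $\diam_e K\geq a^{-1}$; with $U$ the component of $\widehat{\C}\setminus K$ containing $\infty$, Lemma \ref{lemma:caratheodory} gives $U_n\to U$ in the Carath\'eodory sense with base at $\infty$. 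Since each $U_n$ contains the fixed neighborhood $\{|z|>a\}\cup\{\infty\}$ of $\infty$ and the $f_n$ are normalized with uniformly bounded Laurent coefficients $|a_{1,n}|\leq a^2$, the family $\{f_n|_U\}$ is normal and compact (as in the proof of Theorem \ref{theorem:uniformization_euclidean}; see \cite{Schiff:normal_families}); passing to one more subsequence, $f_n\to f$ locally uniformly on $U$, and $f$ is non-constant, hence univalent on $U$ by Hurwitz's theorem. Theorem \ref{theorem:caratheodory} then yields $f_n(U_n)\to f(U)$ in the Carath\'eodory sense, and uniqueness of the Carath\'eodory limit forces $f(U)=\widehat{\C}\setminus\{p\}$. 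Thus $U$ is conformally equivalent to $\widehat{\C}$ with one point removed, i.e.\ to $\C$; but $\widehat{\C}\setminus U\supset K$ has at least two points, so $U$ is conformally equivalent to $\D$ by the Riemann mapping theorem, contradicting Liouville's theorem. This contradiction produces the required $b$.

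The technical core is the lower bound, and the delicate point is orchestrating two simultaneous limits — Hausdorff convergence of the complements $\widehat{\C}\setminus f_n(U_n)$ and the Carath\'eodory convergence of $f_n(U_n)$ inherited from convergence of the domains $U_n$ — so that the limit map $f$ is genuinely conformal and the two resulting descriptions of the limit image agree. All the needed inputs (normality of normalized conformal maps, Theorem \ref{theorem:caratheodory}, Lemma \ref{lemma:caratheodory}, and the coefficient bound from the area theorem) are available; the only non-quantitative step is this final compactness argument, which yields $b=b(a)$ without an explicit value. If an explicit bound were wanted, one could instead estimate the modulus of continuity of $f$ near $\widehat{\C}\setminus U$ directly via Koebe's distortion theorem, but the compactness route is shorter and cleaner.
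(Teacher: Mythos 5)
Your proposal is correct and follows essentially the same route as the paper: the inclusion $\widehat \C\setminus f(U)\subset \br B_e(0,2a)$ via the area theorem (the paper cites \cite{Schiff:normal_families}*{Lemma 5.1.3}), and, for the lower diameter bound, a contradiction argument combining Hausdorff convergence of the complements, Lemma \ref{lemma:caratheodory}, and Theorem \ref{theorem:caratheodory}; the paper streamlines your separate normal-families/Hurwitz step by invoking the ``if'' direction of Theorem \ref{theorem:caratheodory} directly, so that Carath\'eodory convergence of the image domains already yields a conformal limit map $f$ with $f(U)=V$. One small slip at the end: $U$ need not be simply connected, so the Riemann mapping theorem does not give $U\simeq \D$; the contradiction nevertheless stands, since $\widehat\C\setminus U$ contains a continuum of diameter at least $a^{-1}$ and no such domain can be conformally equivalent to $\widehat{\C}\setminus\{p\}$ (for instance, because an injective holomorphic map of $\C$ into $\widehat{\C}$ is M\"obius and hence omits at most one point of $\widehat{\C}$).
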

\begin{proof}
Let $V=f(U)$. By assumption, $f$ is univalent in the region $\{z\in \C: |z|>a\}$. By \cite{Schiff:normal_families}*{Lemma 5.1.3} we have $\widehat \C\setminus V\subset \br B_e(0,2a)$. We will show that $\diam_e(\widehat\C\setminus V)$ is bounded from below, depending on $a$. Suppose, for the sake of contradiction that for every $n\in \N$ there exists a domain $U_n$ with $\infty\in U_n$, $\widehat\C\setminus U_n\subset \br B_e(0,a)$, and $\diam_e(\widehat{\C}\setminus U_n)\geq a^{-1}$, and a conformal map $f_n\colon U_n\to V_n$ with $f(\infty)=\infty$ such that $f_n$ satisfies the normalization in \eqref{lemma:normal:normalization} and $\diam_{e}(\widehat{\C}\setminus V_n) \to 0$ as $n\to\infty$. By the above, we have $\widehat\C\setminus V_n\subset \br B_e(0,2a)$ for each $n\in \N$. 

After passing to a subsequence, we may assume that the sequence $\{\widehat\C\setminus V_n\}_{n\in \N}$ converges to a point $w_0\in \br B_e(0,2a)$ in the Hausdorff sense. By Lemma \ref{lemma:caratheodory}, $\{V_n\}_{n\in \N}$ converges to the domain $V=\widehat{\C}\setminus \{w_0\}$ in the Carath\'eodory sense with base at $\infty$. After passing to a further subsequence, we see that $\{\widehat\C\setminus U_n\}_{n\in \N}$ converges in the Hausdorff sense to a compact set $E\subset \br B_e(0,a)$ with $\diam_e E\geq a^{-1}$. By Lemma \ref{lemma:caratheodory}, $\{U_n\}_{n\in \N}$ converges in the Carath\'eodory sense with base at $\infty$ to the domain $U$ that is the component of $\widehat{\C}\setminus E$ that contains $\infty$. In particular, $\diam_e(\widehat\C\setminus U)\geq \diam_e E \geq a^{-1}$. By Theorem \ref{theorem:caratheodory}, $\{f_n\}_{n\in \N}$ converges locally uniformly in $U$ to a conformal map $f\colon U\to V$. This is a contradiction, since $\partial U$ has at least two points and $\partial V$ is a single point.
\end{proof}

\begin{corollary}\label{corollary:invariance}
Let $a\geq 1$, $\delta>0$, and $U\subset \widehat{\C}$ be a $\delta$-hyperbolic domain such that $\infty\in U$, $\widehat \C\setminus U\subset \br B_e(0,a)$, and $\diam_e(\widehat \C\setminus U)\geq a^{-1}$. If $f\colon U\to \widehat{\C}$ is a conformal embedding with $f(\infty)=\infty$ that satisfies the normalization \eqref{lemma:normal:normalization}, then $f(U)$ is $\delta'(a,\delta)$-hyperbolic.
\end{corollary}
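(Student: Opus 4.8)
The plan is to combine Lemma \ref{lemma:normal} with Theorem \ref{theorem:conformal_bilip}, after converting the Euclidean diameter hypotheses into the spherical diameter hypotheses required by the latter. First I would record the elementary comparison $\chi \le \sigma \le \frac{\pi}{2}\chi$ together with $\chi \le \sigma \le 2e$ from Section \ref{section:uniformization_gh}, and note that on a fixed ball $\br B_e(0,r)$ the chordal and Euclidean metrics are bi-Lipschitz with constants depending only on $r$. Concretely, for points in $\br B_e(0,r)$ one has $\chi(z,w) \ge \frac{2|z-w|}{1+r^2}$, so $\diam_\chi E \ge \frac{2}{1+r^2}\diam_e E$ for any $E \subset \br B_e(0,r)$, and hence $\diam_\sigma E \ge \frac{2}{1+r^2}\diam_e E$.

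Applying this with $r = a$ to $E = \widehat{\C}\setminus U \subset \br B_e(0,a)$, the hypothesis $\diam_e(\widehat{\C}\setminus U) \ge a^{-1}$ gives $\diam_\sigma(\widehat{\C}\setminus U) \ge \frac{2}{a(1+a^2)} =: a_1 > 0$, a quantity depending only on $a$. Next, Lemma \ref{lemma:normal} applied to $f$ (whose hypotheses are exactly the ones assumed here) produces $b = b(a) \ge 1$ with $\widehat{\C}\setminus f(U) \subset \br B_e(0,b)$ and $\diam_e(\widehat{\C}\setminus f(U)) \ge b^{-1}$. Applying the same chordal-versus-Euclidean comparison with $r = b$ to $E = \widehat{\C}\setminus f(U)$ yields $\diam_\sigma(\widehat{\C}\setminus f(U)) \ge \frac{2}{b(1+b^2)} =: a_2 > 0$, again depending only on $a$.

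Now set $a_0 = \min\{a_1, a_2\} > 0$, which depends only on $a$. Then $\diam_\sigma(\widehat{\C}\setminus U) \ge a_0$ and $\diam_\sigma(\widehat{\C}\setminus f(U)) \ge a_0$, so the conformal map $f \colon U \to f(U)$ satisfies the hypotheses of Theorem \ref{theorem:conformal_bilip} with parameter $a_0$. Since $U$ is $\delta$-hyperbolic, Theorem \ref{theorem:conformal_bilip} gives that $f(U)$ is $\delta'$-hyperbolic with $\delta' = \delta'(a_0, \delta)$; tracing the dependence $a_0 = a_0(a)$ back through, $\delta'$ depends only on $a$ and $\delta$, as claimed.

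The only mildly delicate point is the bookkeeping that keeps all constants quantitative and dependent only on $a$ (and $\delta$): one must be careful that the ball radius $b$ from Lemma \ref{lemma:normal} is itself a function of $a$ alone, so that the resulting spherical diameter lower bound for $\widehat{\C}\setminus f(U)$ does not secretly depend on the particular domain $U$ or map $f$. Since Lemma \ref{lemma:normal} is stated quantitatively in exactly this form, no real obstacle arises; the argument is essentially a matter of assembling the two cited results through the metric comparisons.
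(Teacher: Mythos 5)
Your proposal is correct and follows essentially the same route as the paper's proof: apply Lemma \ref{lemma:normal} to control $\widehat{\C}\setminus f(U)$, convert the Euclidean diameter lower bounds into spherical ones via the chordal comparison $\diam_\sigma E \geq \diam_\chi E \geq \frac{2}{1+r^2}\diam_e E$ on $\br B_e(0,r)$, and then invoke Theorem \ref{theorem:conformal_bilip}. The quantitative bookkeeping you describe matches the paper's argument.
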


\begin{proof}
By Lemma \ref{lemma:normal}, there exists a constant $b\geq1$ depending on $a$ such that $\widehat\C\setminus f(U)\subset \br B_e(0,b)$ and $\diam_e(\widehat{\C}\setminus f(U))\geq b^{-1}$. We have 
$$\diam_{\sigma}(\widehat \C\setminus U) \geq \diam_{\chi}(\widehat\C\setminus U)\geq \frac{2}{1+a^2} \diam_e(\widehat \C\setminus U)\geq \frac{2a^{-1}}{1+a^2}$$
and the same estimate is true for $\widehat \C\setminus f(U)$ in place of $\widehat \C \setminus U$ and $b$ in place of $a$. Theorem \ref{theorem:conformal_bilip} provides the desired conclusion.
\end{proof}

\subsection{Spherical inner uniform domains}
Recall the definition of a spherical inner uniform domain from the Introduction. 

\begin{theorem}\label{theorem:spherical_euclidean_inner}
Let $a>0$, $A\geq 1$, and $\widehat{\Omega}\subset \widehat{\C}$ be a spherical inner $A$-uniform domain with $\widehat{\C}\setminus \widehat \Omega \subset \br B_e(0,a)$. Then $\Omega=\widehat{\Omega}\cap \C$ is a Euclidean inner $C(a,A)$-uniform domain.
\end{theorem}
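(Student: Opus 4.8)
The plan is to construct, for every pair $x,y\in\Omega$, a Euclidean inner $C(a,A)$-uniform curve. Write $K=\widehat{\C}\setminus\widehat\Omega\subset\br B_e(0,a)$ (if $K=\emptyset$ then $\Omega=\C$ and the conclusion is trivial, so assume $K\neq\emptyset$); then $\Omega\supset\{|z|>a\}$ and, since $\infty$ is an interior point of $\widehat\Omega$, the Euclidean boundary $\partial\Omega$ equals $K$. Three elementary observations will be used throughout. (i) Globally $\sigma\le 2e$, hence $\lambda_\sigma(x,y)\le 2\lambda_\Omega(x,y)$ and $\dist_\sigma(z,K)\le 2\dist_e(z,K)$; moreover on any ball $\br B_e(0,\rho)$ the metrics $e$ and $\sigma$ are bi-Lipschitz equivalent with a constant $C(\rho)$, so there $\ell_e\le C(\rho)\ell_\sigma$ and $|z-w|\le C(\rho)\sigma(z,w)$. (ii) For $|z|>a$ one has $\dist_e(z,K)\ge|z|-a$. (iii) Every spherical inner $A$-uniform curve $\gamma$ has $\ell_\sigma(\gamma)\le 2\pi A$: at the point where $\gamma$ is bisected by $\sigma$-length its cigar condition forces $\tfrac12\ell_\sigma(\gamma)\le A\dist_\sigma(\cdot,\partial\widehat\Omega)\le A\pi$.

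The heart of the argument is the following claim: \emph{if $x,y\in\Omega$ with $|x|,|y|\le R$ for some $R\ge a$, then $x$ and $y$ are joined by a Euclidean inner $C(R,A)$-uniform curve $\widetilde\gamma$ with $\ell_e(\widetilde\gamma)\le C(R,A)\lambda_\Omega(x,y)$ and $|\widetilde\gamma|\subset\br B_e(0,10R)$.} To prove it, take a spherical inner $A$-uniform curve $\gamma$ from $x$ to $y$ in $\widehat\Omega$; if $|\gamma|\subset B_e(0,10R)$ there is nothing to do, otherwise let $t^*$ and $t^{**}$ be the first and last times $\gamma$ meets $\{|z|=10R\}$, replace the middle portion $\gamma|_{[t^*,t^{**}]}$ — which may pass through $\infty$ — by the shorter circular arc $\mu$ of $\{|z|=10R\}$ joining its endpoints, and set $\widetilde\gamma=\gamma|_{[0,t^*]}*\mu*\gamma|_{[t^{**},1]}$. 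By the intermediate value theorem the two outer pieces lie in $\br B_e(0,10R)$ (in particular they avoid $\infty$), so there $\ell_e\le C(R)\ell_\sigma$; also $\ell_e(\mu)\le\tfrac{\pi}{2}|\gamma(t^*)-\gamma(t^{**})|\le C(R)\,\ell_\sigma(\gamma|_{[t^*,t^{**}]})$ because the endpoints lie at modulus $10R$. Combining these with $\ell_\sigma(\gamma)\le A\lambda_\sigma(x,y)\le 2A\lambda_\Omega(x,y)$ gives $\ell_e(\widetilde\gamma)\le C(R)\ell_\sigma(\gamma)\le C(R,A)\lambda_\Omega(x,y)$. The cigar condition for $\widetilde\gamma$ is obtained, at a point of an outer piece, by transferring the cigar condition of $\gamma$ at the corresponding point through the comparisons above (each half of $\widetilde\gamma$ has $\ell_e$ at most $C(R)$ times the $\ell_\sigma$ of the corresponding half of $\gamma$), and at a point of $\mu$ by using $\ell_e(\widetilde\gamma)\le C(R)\ell_\sigma(\gamma)\le 2\pi A\,C(R)$ together with $\dist_e(\,\cdot\,,K)\ge 10R-a\ge 9R$ there.

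Granting the claim, the theorem follows by a case analysis, using in addition the standard fact that the exterior of a disk is a Euclidean uniform domain with an absolute constant, and the observation that for $|w|>2a$ the radial segment $[\,w,\,(2a)\,w/|w|\,]$ is a $1$-uniform curve in $\Omega$ (its cigar at a point $z$ is controlled since $\dist_e(z,K)\ge|z|-a$). If $|x|>a$ and $|y|>a$, join $x,y$ by an exterior-of-disk uniform curve, which lies in $\{|z|>a\}\subset\Omega$. If $\min\{|x|,|y|\}\le a$ and $\max\{|x|,|y|\}\le 17a$, apply the claim with $R=17a$. Finally, if (say) $|x|\le a$ and $|y|>17a$, run a radial segment from $y$ inward to the point $\widehat y$ of modulus $10a$, join $x$ to $\widehat y$ by the claim-curve with $R=10a$, and concatenate the two at $\widehat y$ via Lemma \ref{lemma:concatenation_cigar} in its length form: this is legitimate because $\dist_e(\widehat y,K)\ge 9a$ while the claim-curve has length at most $C(a,A)$ (by observation (iii)), and quasiconvexity of the concatenation follows from $\lambda_\Omega(x,\widehat y)\le 2\lambda_\Omega(x,y)$ and $|y|-10a\le\lambda_\Omega(x,y)$. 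In every case all constants depend only on $a$ and $A$.

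The main obstacle is precisely the degeneration of $\sigma$ relative to $e$ near $\infty$: a spherical inner uniform curve between two bounded points of $\Omega$ can wander arbitrarily close to, or through, $\infty$, where it no longer controls the Euclidean geometry of $\Omega=\widehat\Omega\setminus\{\infty\}$. Cutting such a curve at a sphere $\{|z|=10R\}$ and splicing in a short circular arc — exploiting that the total spherical length is a priori $\le 2\pi A$, so the Euclidean length inside any fixed ball is controlled — is what resolves this, and the radial-segment device reduces a far-away endpoint to the bounded case. The one point requiring vigilance is that the escape radius in the claim, and the radius $R$ used in each case, must be fixed multiples of $a$, so that the resulting constants genuinely depend only on $a$ and $A$.
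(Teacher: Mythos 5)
Your proposal is correct and follows essentially the same route as the paper's proof: compare $\sigma$ and $e$ on a fixed ball, use the a priori bound $\ell_\sigma(\gamma)\le 2\pi A$ for spherical inner uniform curves, reroute an escaping curve along a circle $\{|z|=\mathrm{const}\}$ and control the spliced arc by the spherical length of the discarded middle, treat two far-away points via the uniformity of the disk exterior, and glue the mixed case with Lemma \ref{lemma:concatenation_cigar}. The only deviations are cosmetic (different radii, and a radial segment rather than an exterior-of-disk uniform curve to bring the far point inward in the mixed case), and your explicit verification of quasiconvexity there via $\lambda_\Omega(x,\widehat y)\le 2\lambda_\Omega(x,y)$ is sound.
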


We start with a preliminary statement.

\begin{lemma}\label{lemmadistance}
Let $\widehat \Omega\subset \widehat{\C}$ be a domain such that  and $\widehat{\C}\setminus \widehat \Omega\subset \br B_e(0,a)$ for some $a>0$. Also, let $\Omega=\widehat \Omega \cap \C$. Then for all $z\in \Omega\cap \br B_e (0,a)$ we have
\[\dist_{\sigma} (z,\widehat \C\setminus \Omega)\le \dist_{\sigma}(z,\widehat \C\setminus \widehat \Omega)\le C(a)\dist_{\sigma} (z,\widehat \C\setminus \Omega) \leq 2C(a) \dist_{e}(z,\C\setminus \Omega).\] 
\end{lemma}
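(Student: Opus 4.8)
The first inequality $\dist_{\sigma}(z,\widehat\C\setminus\Omega)\le \dist_{\sigma}(z,\widehat\C\setminus\widehat\Omega)$ is immediate, since $\widehat\C\setminus\widehat\Omega\subset \widehat\C\setminus\Omega$ (indeed $\widehat\C\setminus\Omega=(\widehat\C\setminus\widehat\Omega)\cup\{\infty\}$). For the last inequality, recall from the preliminaries that $\sigma\le 2e$ on $\widehat\C$, so $\dist_{\sigma}(z,\widehat\C\setminus\Omega)\le 2\dist_e(z,\widehat\C\setminus\Omega)=2\dist_e(z,\C\setminus\Omega)$; this uses only that the Euclidean-nearest point of $\C\setminus\Omega$ to $z$ is also a point of $\widehat\C\setminus\Omega$, and that $\infty$ is irrelevant for the Euclidean distance. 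So the whole content of the lemma is the middle inequality $\dist_{\sigma}(z,\widehat\C\setminus\widehat\Omega)\le C(a)\dist_{\sigma}(z,\widehat\C\setminus\Omega)$, and the point is that removing the single boundary point $\infty$ from the set we measure distance to can only decrease the distance by a bounded factor, \emph{provided} $z$ stays in the bounded region $\br B_e(0,a)$.

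Here is how I would prove the middle inequality. Since $\widehat\C\setminus\Omega=(\widehat\C\setminus\widehat\Omega)\cup\{\infty\}$, we have the dichotomy
\[
\dist_{\sigma}(z,\widehat\C\setminus\Omega)=\min\{\dist_{\sigma}(z,\widehat\C\setminus\widehat\Omega),\ \sigma(z,\infty)\}.
\]
If the minimum is achieved by the first term, the middle inequality holds trivially with $C(a)=1$. So assume $\sigma(z,\infty)\le \dist_{\sigma}(z,\widehat\C\setminus\widehat\Omega)$. By hypothesis $\widehat\C\setminus\widehat\Omega$ is nonempty and contained in $\br B_e(0,a)$; pick any point $w_0\in\widehat\C\setminus\widehat\Omega\subset\br B_e(0,a)$. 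Then
\[
\dist_{\sigma}(z,\widehat\C\setminus\widehat\Omega)\le \sigma(z,w_0)\le 2|z-w_0|\le 2(|z|+a)\le 4a,
\]
using $\sigma\le 2e$ and $z\in\br B_e(0,a)$. On the other hand, by the chordal lower bound $\sigma\ge\chi$ and the explicit formula for $\chi(z,\infty)$,
\[
\sigma(z,\infty)\ge \chi(z,\infty)=\frac{2}{\sqrt{1+|z|^2}}\ge \frac{2}{\sqrt{1+a^2}}.
\]
Combining the two displays in the case under consideration gives
\[
\dist_{\sigma}(z,\widehat\C\setminus\widehat\Omega)\le 4a\le 4a\cdot \frac{\sqrt{1+a^2}}{2}\cdot \sigma(z,\infty)=2a\sqrt{1+a^2}\,\dist_{\sigma}(z,\widehat\C\setminus\Omega),
\]
so the middle inequality holds with $C(a)=\max\{1,\,2a\sqrt{1+a^2}\}$.

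The only subtlety — hence the one place to be careful — is ensuring all three pieces are glued with a single constant $C(a)$; the estimate $\sigma\le 2e$ in the last inequality and the chordal formulas $\chi\le\sigma$ and $\chi(z,\infty)=2(1+|z|^2)^{-1/2}$ are exactly the tools quoted at the start of the section, and the boundedness $z\in\br B_e(0,a)$ together with $\widehat\C\setminus\widehat\Omega\subset\br B_e(0,a)$ is what makes $\sigma(z,\infty)$ comparable (from both sides, up to $a$-dependent constants) to a fixed quantity whenever $\infty$ would otherwise be the nearest boundary point. I expect no genuine obstacle here; it is a short computation, and the quoted inequality $\sigma\ge\chi$ is what prevents $\sigma(z,\infty)$ from being too small when $z$ is far from $0$ but still inside $\br B_e(0,a)$.
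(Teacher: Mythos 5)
Your proposal is correct and follows essentially the same route as the paper: the first and third inequalities are handled by the trivial inclusion and by $\sigma\le 2e$, and the middle one by observing that if $\infty$ realizes $\dist_\sigma(z,\widehat\C\setminus\Omega)$ then $\dist_\sigma(z,\widehat\C\setminus\widehat\Omega)\le C(a)\,\sigma(z,\infty)$, using the chordal comparison and the confinement of $z$ and $\widehat\C\setminus\widehat\Omega$ to $\br B_e(0,a)$. The paper merely obtains a slightly sharper constant ($\pi a$ instead of $2a\sqrt{1+a^2}$) by keeping the factor $\sqrt{1+|z|^2}$ exact rather than bounding the two quantities separately.
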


\begin{proof}
The first inequality is trivial since $\Omega\subset \widehat{\Omega}$. For the second, let $z\in \Omega\cap \br B_e (0,a)$. By comparing the spherical and chordal metrics, we deduce 
\begin{align*}
\dist_{\sigma} (z,\widehat \C\setminus \widehat\Omega) \le\frac{ \pi}{\sqrt{1+|z|^2}} \min\{{|z-\zeta|}:\zeta\in \widehat\C\setminus \widehat\Omega\}\le {\pi a }\sigma(z,\infty),
\end{align*}
since $|z-\zeta|\le 2a$. If $\dist_{\sigma}(z,\widehat\C\setminus \widehat\Omega)>\dist_\sigma(z,\widehat{\C}\setminus \Omega)$, then $\dist_\sigma(z,\widehat{\C}\setminus \Omega)=\sigma (z,\infty)$, so the above inequality proves the second inequality. The third inequality follows from the fact that $\dist_{\sigma }(z,\widehat{\C}\setminus \Omega)\leq \sigma(z,\zeta)\leq 2|z-\zeta|$ for every $\zeta\in \C\setminus \Omega$.
\end{proof}

In the proof below we use notions from Section \ref{section:inner_euclidean}, such as cigars, adapted to the spherical metric. The notation is self-explanatory.

\begin{proof}[Proof of Theorem \ref{theorem:spherical_euclidean_inner}]
We start with some basic facts. For each curve $\gamma$ in $\C$,
\begin{align}\label{theorem:se:lengths}
\textrm{if $|\gamma|\subset  \br B_e(0,3a)$, then $\ell_e(\gamma) \leq K(a) \ell_{\sigma}(\gamma)$},
\end{align}
where $K(a)= 2^{-1}(1+9a^2)$. Moreover, if $\gamma$ is an inner $A$-uniform curve in $\widehat \Omega$, then the condition $\cig_{\ell_{\sigma}}(\gamma, A)\subset \widehat{\Omega}$ implies that 
\begin{align}\label{theorem:se:2pi}
\ell_{\sigma}(\gamma)\leq 2\pi A.
\end{align}
Let $x,y\in \Omega$. We consider three cases.

\medskip
\noindent
\textbf{Case 1:} $x,y\in \Omega\setminus \br B_e(0,a)$. It is elementary to show that $\C\setminus \br B_e(0,a)$ is a Euclidean $C$-uniform domain for a constant $C>0$ independent of $a>0$. Thus, any two points  $x,y\in \Omega\setminus \br B_e(0,a)$ can be connected by a curve $\gamma$ in $\Omega$ with $\ell_e(\gamma)\leq C |x-y|\leq C\lambda^e_{\Omega}(x,y)$ and $\cig_{\ell_e}(\gamma, C)\subset \Omega$.

\medskip
\noindent
{\bf Case 2:}  $x,y\in \Omega\cap {B_e}(0,3a)$. We will show that there exists an inner $C(a,A)$-uniform curve $\gamma'$ in $\Omega$ connecting $x$ and $y$ such that 
\begin{align}\label{theorem:se:case2}
\ell_e(\gamma') \leq 4K(a) \pi A.
\end{align} 
By assumption there is a spherical inner $A$-uniform curve $\gamma \colon [0,1]\to \widehat \Omega$ connecting $x$ and $y$.   

\medskip
\noindent
{\bf Case 2(a):} $|\gamma| \subset  \Omega\cap \overline{B}_e(0,3a) $. By \eqref{theorem:se:lengths}, we have
\begin{align*}
\ell_e(\gamma)&\le K(a)\ell_{\sigma}(\gamma)\le  K(a)A\lambda_{\widehat \Omega}^{\sigma} (x,y)\le K(a)A\lambda_{\Omega}^{\sigma} (x,y)\le 2K(a)A\lambda_{\Omega}^e(x,y).
\end{align*}
In combination with \eqref{theorem:se:2pi}, this implies \eqref{theorem:se:case2}. For $t\in [0,1]$, by \eqref{theorem:se:lengths} and Lemma \ref{lemmadistance}, we have
\begin{align*}
\min\{\ell_e (\gamma|_{[0,t]}),\ell_e (\gamma|_{[t,1]})\}&\le K(a)\min\{\ell_{\sigma} (\gamma|_{[0,t]}),\ell_{\sigma} (\gamma|_{[t,1]})\} \\
&\le K(a)A\dist_{\sigma}(\gamma(t), \widehat \C \setminus \widehat \Omega)\\
&\le K(a)C(a)A\dist_e(\gamma(t),\C\setminus \Omega).
\end{align*}
So, $\gamma$ is a Euclidean inner $2K(a)C(a)A$-uniform curve connecting $x$ and $y$ in $\Omega$. 

\medskip
\noindent
{\bf Case 2(b):} $|\gamma| \not\subset \Omega\cap \overline{B}_e(0,3a) $. Let $z_1$ and $z_2$ be the first and last point, respectively, of the intersection of $\gamma$ with $\partial B_e(0,3a)$, assuming that $\gamma$ starts at $x$. We set $\gamma_1$ to be the subpath of $\gamma$ joining $x$ to $z_1$, $\gamma_2$ the subpath of $\gamma$ from $z_1$ to $z_2$ and $\gamma_3$ the subpath from $z_2$ to $y$. We define a curve $\gamma'$ to be the concatenation of $\gamma_1$, $\gamma_3$, and a path $\gamma_2'$ that is the smallest arc on $\partial B_e(0,3a)$ connecting $z_1$ and $z_2$. Then $|\gamma'|\subset \Omega\cap \overline{B}_e(0,3a)$. By \eqref{theorem:se:lengths}, we have
\begin{align*}
\ell_e (\gamma_2')\le \frac{\pi}{2}|z_1-z_2|\le \frac{\pi}{2}K(a)\sigma(z_1,z_2)\le 2K(a) \ell_{\sigma}(\gamma_2)
\end{align*}
and hence, again by \eqref{theorem:se:lengths},
\begin{align}\label{firstcond}
\ell_e (\gamma')&=\ell_e (\gamma_1)+\ell_e (\gamma_2')+\ell_e (\gamma_3)\le {2}K(a) (\ell_{\sigma} (\gamma_1)+\ell_{\sigma} (\gamma_2)+\ell_{\sigma} (\gamma_3)) \nonumber\\
&=2K(a) \ell_{\sigma} (\gamma)\le 2 K(a) A \lambda_{\Omega}^{\sigma} (x,y)\le 4 K(a) A\lambda_{\Omega}^e (x,y).
\end{align}
Note that by \eqref{firstcond} and \eqref{theorem:se:2pi} we obtain \eqref{theorem:se:case2}. Now, we check that  $\gamma_1$ and $\gamma_2'$ satisfy the assumptions of Lemma \ref{lemma:concatenation_cigar}. First, we have
\[\min\{\ell_e(\gamma_1),\ell_e (\gamma_2')\}\le \ell_e(\gamma_2')\le 6\pi a \le 3\pi \dist_e (z_1,\C\setminus \Omega).\]
Consider parametrizations $\gamma_2',\gamma_1\colon[0,1]\to \Omega$ such that $\gamma_1(0)=x$, $\gamma_1(1)=z_1$ and $\gamma_2'(0)=z_1$, $\gamma_2'(1)=z_2$. Then by \eqref{theorem:se:lengths} and Lemma \ref{lemmadistance}, for $t\in[0,1]$, we have
\begin{align}\label{gamma1}
\min \{\ell_e (\gamma_1|_{[0,t]}),\ell_e (\gamma_1|_{[t,1]})\}&\le K(a) \min \{\ell_{\sigma} (\gamma_1|_{[0,t]}),\ell_{\sigma} (\gamma_1|_{[t,1]})\} \nonumber \\
&\le K(a)A\dist_{\sigma} (\gamma_1(t),\widehat \C\setminus \widehat \Omega) \nonumber\\
&\le K(a)C(a)A\dist_e(\gamma_1(t),\C\setminus \Omega)
\end{align}
and trivially we have 
\begin{align*}
\min \{\ell_e (\gamma_2'|_{[0,t]}),\ell_e (\gamma_2'|_{[t,1]})\}\le \ell_e(\gamma_2') \leq 6\pi a\leq  3\pi \dist_e (\gamma_2'(t),\C\setminus \Omega).
\end{align*}
Hence, Lemma \ref{lemma:concatenation_cigar} implies that  $\cig_{\ell_e} (\gamma_{12},C_1(a,A))\subset \Omega$, where $\gamma_{12}$ is the concatenation of $\gamma_1$ and $\gamma_2'$ at $z_1$. Working as in (\ref{gamma1}) we obtain $\cig_{\ell_e} (\gamma_3,K(a)C(a) A)\subset \Omega$. Also, by \eqref{theorem:se:lengths} and \eqref{theorem:se:2pi}, we obtain
\begin{align*}
\min\{\ell_e(\gamma_{12}),\ell_e (\gamma_3)\}&\le \ell_e(\gamma_3)\leq K(a)\ell_{\sigma}(\gamma)\le K(a)2\pi A \leq  \frac{K(a)\pi A}{a}\dist_e (z_2,\C\setminus \Omega).
\end{align*}
Lemma \ref{lemma:concatenation_cigar} implies that  $\cig_{\ell_e} (\gamma',C_2(a,A))\subset \Omega$. By this and (\ref{firstcond}) we conclude that  $\gamma'$ is a Euclidean inner $C_3(a,A)$-uniform curve connecting $x$ to $y$ in $\Omega$.

\medskip
\noindent
{\bf Case 3:} $x\in \Omega\cap {B}_e(0,2a)$ and $y\in \Omega\setminus {B}_e(0,3a)$.  Let $z\in \partial B_e(0,2a)$ be an arbitrary point. By Case 2 there exists a Euclidean inner $C(a,A)$-uniform curve $\gamma_x$ in $\Omega$ connecting $x$ to $z$ and $\ell_e(\gamma_x)\leq 4K(a)\pi A$. Since $\C\setminus \br B_e(0,a)$ is a Euclidean $C$-uniform domain, there exists a Euclidean inner $C$-uniform curve $\gamma_y$ in $\Omega$ connecting $y$ to $z$. Note that
\[\min\{\ell_e (\gamma_x), \ell_e (\gamma_y)\}\le \ell_e(\gamma_x)\le 4K(a)\pi A\leq \frac{4K(a)\pi A}{a}  \dist (z,\C \setminus \Omega).\]
Thus, we may apply Lemma \ref{lemma:concatenation_cigar} to obtain that the concatenation $\gamma$ of $\gamma_x$ and $\gamma_y$ is a Euclidean inner $C_4(a,A)$-uniform curve joining $x$ and $y$ in $\Omega$.
\end{proof}

\subsection{Proof of Theorem \ref{theorem:main}}

A domain $\Omega\subset \widehat{\C}$ is a \textit{slit domain} if $\infty \in \Omega$ and all components of $\widehat{\C}\setminus \Omega$ are either horizontal line segments or vertical line segments. Some segments could be degenerate, so $\widehat{\C}\setminus \Omega$ can also have point components. 
We use the following statement, which is a consequence of \cite{BonkHeinonenKoskela:gromov_hyperbolic}*{Proposition 7.12}.
\begin{prop}\label{prop:uniform_circle_hyperbolic}
Let $\Omega\subset \widehat{\C}$ be a $\delta$-hyperbolic domain for some $\delta>0$. 
\begin{enumerate}[label=\normalfont(\roman*)]
	\item If $\Omega$ is a circle domain, then $\Omega$ is a spherical $A(\delta)$-uniform domain. 
	\item If $\Omega$ is a slit domain, then $\Omega$ is a spherical inner $A(\delta)$-uniform domain.
\end{enumerate}
\end{prop}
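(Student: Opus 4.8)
The plan is to reduce both statements to \cite{BonkHeinonenKoskela:gromov_hyperbolic}*{Proposition 7.12}, which provides conditions on the boundary geometry under which a Gromov hyperbolic domain is spherically uniform, respectively spherically inner uniform. There are two things to supply: first, that circle domains and slit domains satisfy the geometric hypotheses of that proposition; and second, that they do so with constants that are \emph{absolute}, so that the uniformity constant in the conclusion ends up depending on the hyperbolicity constant $\delta$ alone.

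The geometry is controlled entirely by the shape of the complementary components. In a circle domain every complementary component is a closed round disk or a point, and in a slit domain it is a straight segment or a point; in both cases these sets are convex, hence uniformly fat and of $1$-bounded turning, which is what the hypotheses of \cite{BonkHeinonenKoskela:gromov_hyperbolic}*{Proposition 7.12} ask of the boundary, with absolute constants. The distinction between the two parts is that a complementary disk can be circumnavigated inside $\Omega$ by a curve whose length is comparable to the chordal distance of its endpoints (take an arc of a slightly enlarged concentric circle), so in the circle case one obtains genuine uniformity; whereas a long thin segment has points on opposite sides that are chordally close but far apart within $\Omega$, which defeats the Euclidean form of the hypotheses and leaves only their inner versions, hence only inner uniformity in the slit case. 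Combining these facts with $\delta$-hyperbolicity and \cite{BonkHeinonenKoskela:gromov_hyperbolic}*{Proposition 7.12} yields spherical $A(\delta)$-uniformity for circle domains and spherical inner $A(\delta)$-uniformity for slit domains.

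A slit domain has $\infty\in\Omega$ by definition, so no normalization is needed for part (ii); for part (i), if the cited result is used in a form requiring $\infty\in\Omega$, one moves $\infty$ into $\Omega$ by a M\"obius transformation — which preserves the class of circle domains, since M\"obius maps send circles to circles — and controls the change of hyperbolicity constant through Theorem \ref{theorem:conformal_bilip}, together with a rescaling to normalize $\diam_\sigma(\widehat\C\setminus\Omega)$ when that diameter is small. I expect the genuine difficulty to lie in the quantitative statement: since BHK's arguments are not everywhere phrased quantitatively, one must either extract explicit constants from the proof of \cite{BonkHeinonenKoskela:gromov_hyperbolic}*{Proposition 7.12}, or argue by contradiction and compactness — taking a sequence of $\delta$-hyperbolic circle (resp.\ slit) domains with uniformity constants tending to infinity, normalizing, and passing to a Hausdorff limit of the complements. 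In that approach the delicate point is to check that the limit is again a circle (resp.\ slit) domain and is still $\delta'$-hyperbolic (the quasihyperbolic densities, and hence the metrics, converge locally uniformly under Hausdorff convergence of the complements, so $\delta$-thinness of triangles passes to the limit), so that the qualitative form of the cited proposition applies to it and produces the desired contradiction.
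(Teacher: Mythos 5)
Your proposal takes essentially the same route as the paper: the paper offers no argument for this proposition beyond declaring it a consequence of \cite{BonkHeinonenKoskela:gromov_hyperbolic}*{Proposition 7.12}, and your main line is exactly that reduction, together with the (routine) verification that the complementary disks/points of a circle domain, respectively the segments/points of a slit domain, satisfy the hypotheses with absolute constants, yielding uniformity in the first case and only inner uniformity in the second. One small remark: for the normalization in part (i) a spherical isometry (rotation) moving a point of $\Omega$ to $\infty$ suffices and preserves the class of circle domains, the constant $\delta$, and spherical (inner) uniformity exactly, so the detour through Theorem \ref{theorem:conformal_bilip} and a rescaling of $\diam_\sigma(\widehat{\C}\setminus\Omega)$ — which would smuggle in a dependence on the size of the complement and spoil the pure $A(\delta)$ dependence — is unnecessary, as is the compactness fallback (whose delicate point, that Hausdorff limits of slit-domain complements need not again bound slit domains, you rightly flag).
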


\begin{proof}[Proof of Theorem \ref{theorem:main}]
Let $\delta>0$ and $\Omega\subset \widehat{\C}$ be a $\delta$-hyperbolic domain. If $\partial \Omega$ is a single point, then $\Omega$ is already a circle domain that is uniform so we have nothing to prove. Suppose that $\partial \Omega$ contains at least two points. Then the largest spherical ball contained in $\Omega$ has radius less than $\pi$. Let $a\in (0,\pi)$ and suppose that the largest spherical ball contained in $\Omega$ has radius in the interval $[a,\pi-a]$. We will show that $\Omega$ is a conformally equivalent to an $A$-uniform circle domain, where $A\geq 1$ depends only on $\delta$ and $a$.

By applying a spherical isometry, we assume that the largest spherical ball contained in $\Omega$ is centered at $\infty$. Hence, each point $z\in \widehat \C\setminus \Omega$ satisfies $\sigma(0,z)\leq \pi-a$, so  $\widehat \C\setminus \Omega$ is contained in $\br B_e(0,\tan(\pi/2-a/2))$. On the other hand, $\widehat \C\setminus \Omega$ is not contained  in a spherical ball of radius $a$. In particular, 
$$ \diam_{e}(\widehat \C\setminus \Omega)\geq \frac{1}{2}\diam_{\sigma}( \widehat \C\setminus \Omega)\geq \frac{a}{2}.$$ 

By \cite{Schiff:normal_families}*{Theorem 5.1.5}, there exists a conformal map $h$ from $\Omega$ onto a slit domain $\widehat U$ such that $h(\infty)=\infty$ and $h$ satisfies the normalization in \eqref{lemma:normal:normalization}. By Lemma \ref{lemma:normal}, there exists $b\geq 1$ depending only on $a$ such that $\widehat \C\setminus \widehat U\subset \br B_e(0,b)$ and $\diam_e(\widehat \C\setminus \widehat U)\geq b^{-1}$. By Corollary \ref{corollary:invariance}, $\widehat U$ is $\delta_1$-hyperbolic for some $\delta_1>0$ that depends only on $\delta$ and $a$. By Proposition \ref{prop:uniform_circle_hyperbolic}, there exists $A_1\geq 1$ that depends only on $\delta_1$ such that $\widehat U$ is a spherical inner $A_1$-uniform domain. By Theorem \ref{theorem:spherical_euclidean_inner}, $U=\widehat{U}\cap \C$ is a Euclidean inner $A_2$-uniform domain for some $A_2\geq 1$ that depends only on $A_1$ and on $b$.

By Theorem \ref{theorem:uniformization_euclidean}, there exists a conformal map $f$ from $U$ onto a circle domain $D\subset \C$ that contains a neighborhood of $\infty$ and $f$ extends to $\infty$ so that $f(\infty)=\infty$. By postcomposing with a conformal automorphism of $\C$, we may assume that $f$ satisfies the normalization in \eqref{lemma:normal:normalization}. By Corollary \ref{corollary:invariance}, we conclude that $\widehat D=f(\widehat U)$ is $\delta_2$-hyperbolic for some $\delta_2>0$ depending only on $\delta_1$ and $b$. We employ again Proposition \ref{prop:uniform_circle_hyperbolic}, which implies that $\widehat D$ is a spherical $A_3$-uniform circle domain for some $A_3\geq 1$ that depends only on $\delta_2$. The composition $g=f\circ h\colon \Omega\to \widehat D$ gives the desired conformal map and completes the proof of the existence in Theorem \ref{theorem:main}.  The uniqueness follows from the uniqueness part of Theorem \ref{theorem:uniformization_euclidean}.
\end{proof}

\bibliography{biblio} 

\end{document}